\crefname{subsection}{section}{sections}
\crefname{ALC@unique}{step}{steps}
\crefname{assumption}{Assumption}{Assumptions}
\crefname{hypothesis}{Hypothesis}{Hypotheses}
\title{Solving Chance-Constrained Problems via a Smooth Sample-Based Nonlinear Approximation\thanks{Submitted to the editors DATE.}}
\author{Alejandra Pe\~na-Ordieres\thanks{Department of Industrial Engineering and Management Sciences, Northwestern University, Evanston, IL 
  (\email{alejandra.pena@u.northwestern.edu}, \email{andreas.waechter@northwestern.edu}).
  These authors were funded in part by National Science Foundation grant DMS-1522747.}
\and
James R. Luedtke\thanks{Department of Industrial and Systems Engineering, University of Wisconsin-Madison, Madison, WI 
  (\email{jim.luedtke@wisc.edu}).
  This author was funded by the U.S. Department of Energy, Office of Science, Office of Advanced Scientific Computing Research (ASCR) under Contract DE-AC02-06CH11347.}
\and
Andreas W\"achter\footnotemark[2]}
\DeclareMathOperator{\argmax}{argmax}
\DeclareMathOperator{\E}{E}
\DeclareMathOperator{\proba}{\mathbb{P}}
\begin{document}

\maketitle

\begin{abstract}
We introduce a new method for solving nonlinear continuous optimization problems with chance constraints. Our method is based on a reformulation of the probabilistic constraint as a quantile function. The quantile function is approximated via a differentiable sample average approximation. We provide theoretical statistical guarantees of the approximation, and illustrate empirically that the reformulation can be directly used by standard nonlinear optimization solvers in the case of single chance constraints. Furthermore, we propose an S$\ell_1$QP-type trust-region method to solve instances with joint chance constraints. We demonstrate the performance of the method on several problems, and show that it scales well with the sample size and that the smoothing can be used to counteract the bias in the chance constraint approximation induced by the sample approximation.
\end{abstract}

\begin{keywords}
    chance constraints, nonlinear optimization, quantile function, sample average approximation, smoothing, sequential quadratic programming, trust region
\end{keywords}

\begin{AMS}
  90C15, 90C30, 90C55
\end{AMS}


\section{Introduction} \label{sec: Intro}

Consider the following minimization problem subject to a chance constraint
\begin{subequations} \label{eq: CCP}
\begin{align}
\min_{x \in X} &\quad f(x)\\
\mathrm{s.t.} &\quad \proba(c_j(x,\xi) \leq 0, j \in \{1,\ldots,m\}) \geq 1 - \alpha, \label{eq: ProbaConst}
\end{align}
\end{subequations}
where $X \subseteq \mathbb{R}^n$ represents a closed deterministic feasible region, $f: \mathbb{R}^n \rightarrow \mathbb{R}$ represents the objective function to be minimized, $c: \mathbb{R}^n \times \mathbb{R}^s \rightarrow \mathbb{R}^m$ is a vector-valued function, $\xi$ is a random vector with support $\Xi \subseteq \mathbb{R}^s$, and $\alpha \in (0,1)$ is a risk parameter given a priori. This kind of problem is known in the literature as a joint chance-constrained problem (JCCP), in which all $m$ constraints must be satisfied simultaneously with probability at least $1-\alpha$. A special case is when $m=1$ and only one constraint needs to be satisfied with probability $1-\alpha$; this is called a single chance-constrained problem (SCCP).

Chance-constrained optimization problems were introduced in \cite{CharCoopSymo58} and have been extensively studied ever since; see e.g., \cite{Prek03}. These problems can be found in a plethora of applications such as water management \cite{LodiMalaNann16}, optimization of chemical processes \cite{HenrLiMoll01,HenrMoll03}, and optimal power flow \cite{BienCherHarn14,MolzRoal18}, to mention a few.

This formulation gives rise to several difficulties. First, the structural properties of $c(x,\xi)$ may not be passed to the constraint $\proba(c_j(x,\xi) \leq 0, j \in \{1,\ldots,m\}) \geq 1 - \alpha$. For example, even if $c_1(x,\xi),\dots,c_m(x,\xi)$ are all linear in $x$, the probabilistic constraint may not define a convex feasible region. Second, the functional form of the distribution of $\xi$ may not be known. And finally, even if the distribution of $\xi$ is known, there might not be a tractable analytical function to describe the constraint or a numerically efficient way to compute it.

We focus on problems in which $f$ and $c$ are continuous and differentiable, with respect to $x$, and where $\xi$ is such that the random variable defined by $c(x,\xi)$ has a continuous cumulative density function (cdf) for all $x \in X$. Other than that, we make no assumptions on the type of constraint $c(x,\xi)$; we do not limit ourselves to linear or convex functions.

\subsection{Literature Review}

In this section we review several methods for solving chance-constrained problems. The first couple of methods make assumptions on the distribution of $\xi$, or at least require that some knowledge of the distribution $\xi$ is available, whereas the latter methods make no such assumptions. Our idea falls into the latter category, and does not impose restrictions on the form of the constraints or on the distribution, except that it is a continuous distribution.

To compute \cref{eq: ProbaConst}, some methods take advantage of the special structure of constraint $c$ and the distribution of $\xi$. For example, in \cite[Lemma 2.2]{Henr07}, if $\xi$ has an elliptical symmetric distribution (e.g., multivariate normal random variable) and if $c(x,\xi) = \xi^Tx-b$, then the problem can be expressed as a second-order cone program, and can be solved in a tractable and efficient manner using standard solvers. More sophisticated procedures make use of efficient numerical integration techniques to compute Gausian-like probabilities, as well as their corresponding gradients (e.g., \cite{Deak00,GenzBret09,hantoute2019subdifferential,VanackAlekMuno18}). For linear chance constraints, approaches based on cutting planes \cite{VanacketAl14}, bundle \cite{VanackSaga14}, and SQP \cite{BremHenrMoel15} methods have been developed. Nonlinear constraints can be addressed using a spherical-radial decomposition method \cite{Heit19,VanackHenr14}.


There are several approaches that avoid making assumptions on $\xi$ by sampling to approximate the probabilistic constraint. We divide such techniques into those that use an integer reformulation, and those that are structured as nonlinear programming problems (NLP).

In the first category, the probabilistic constraint is replaced by the empirical distribution function and formulated as a mixed-integer nonlinear program (MINLP), e.g.\ \cite{LuedAhme08}. This method takes realizations of the random variable $\xi$ and requires that at least $(1-\alpha)\%$ of the realized constraints are satisfied. This idea has several advantages; the solution converges to the true solution of \cref{eq: CCP} as the sample size increases, and for convex instances, a global solution of the approximation can be computed by branch and bound. However, the complexity of the problem increases with the sample size because the number of binary variables increases. This issue has been circumvented to some extent for certain types of constraints $c(x,\xi)$, e.g. linear constraints \cite{LuedAhmeNemh10}, but this procedure cannot be applied to general constraints.

To avoid the need for solving MINLPs, a conservative and more tractable approximation can be obtained by using the scenario-based approach, e.g. \cite{CalaCamp05,CampGaraPran09,NemiShap06_SA}. These methods seek to satisfy all of the constraints in a sample of pre-specified size to ensure that the solution is feasible with high confidence. The reformulation proposed in~\cite{CalaCamp05} allows one to obtain solutions very easily since the structure of the original constraints $c(x,\xi)$ is preserved. For example, if $f$ is convex, $X$ defines a convex set and $c(x,\xi_i)$ define convex functions on $x$ for all $i=\{1,\dots,N \}$ (where $N$ is the size of the sample), then the problem that arises is convex. However, the limitation of this method is that one may obtain highly conservative solutions and that it does not offer direct control on the probability level of the chance constraint.

Another tractable conservative approximation can be obtained by using conditional value-at-risk, which is based on using a conservative convex approximation of the indicator function \cite{NemiShap06}. This approximation typically finds feasible but sub-optimal solutions. To avoid overly conservative solutions, some authors propose a difference-of-convex functions approximation to the indicator function \cite{HongYangZhan11}, while some others propose outer or inner approximations to the indicator function \cite{CaoZava17, GeleHoffKlop17}. The main drawback of the last two methods mentioned is that the constraint function can become very ``flat'' in the regions of interest. That is, when the probability of the constraint satisfaction evaluated at a given $x$ is close to one or close to zero, the information obtained from the gradient may not provide a sufficiently accurate approximation of the constraints or may even be erroneous. We discuss this issue further when we motivate our proposed reformulation of \cref{eq: CCP} in \cref{sec: VaRReformulation}.

A potential drawback of sample-based methods is that in some cases the number of samples needed to obtain an accurate solution might be fairly large \cite[Example 1]{Henr11}. Tailored approaches that take the problem structure directly into account can be more efficient when applicable.




\subsection{Our Approach}

In this paper we propose an NLP reformulation of the problem based on sample average approximation (SAA), in which we rewrite the probability constraint as a quantile constraint. We first argue that the latter formulation is more suitable for gradient-based optimization methods than the former. Then, we present a way to approximate the quantile constraint using samples in a way that provides a smooth constraint. We find in our numerical experiments that the smoothing helps control the variance of the solutions obtained from different samples. The approximation we present uses a parameter $\epsilon$ that depends on the scaling of the problem, which might be difficult to select; thus, we also discuss an algorithm that finds an appropriate value of $\epsilon$.

The proposed approach has the following advantages. First, we observe empirically that the solutions we obtain are very often ``more robust'' compared to solutions obtained from solving the traditional (non-smoothed) SAA problem via mixed-integer programming (MIP) formulations that use the empirical cdf. More specifically, when we compare the solutions obtained from our method to those returned by the MIP, we consistently find that our method attains a better objective function value and has less variability across different samples, while still maintaining out-of-sample feasibility. We attribute this to the fact that our approach uses information from the entire sample, whereas the empirical cdf only takes into account whether enough constraints are satisfied. These numerical experiments suggest that the solutions we attain are able to ``generalize'', i.e., the solutions obtain better performance on out-of-sample scenarios. Second, our method is scalable; we find that the time to solve an instance with our approach grows modestly with the sample size. Third, our formulation does not assume any type of structure from the constraint functions save for continuity and differentiability with respect to $x$; therefore, we are able to find local solutions to non-linear and non-convex problems. Finally, for SCCPs, a standard NLP solver can be used to find the solution of the problem for the appropriate choice of the parameters involved in the formulation, and for JCCPs we are able to derive an S$\ell_1$QP algorithm that has proven empirically to give fast convergence to the solution after only a few iterations.

The remainder of this paper is organized as follows. In \cref{sec: VaRReformulation} we present the proposed reformulation of the problem. In \cref{sec: Feasibility} we study the convergence and feasibility of the solutions to the approximate problem to those of the true problem \cref{eq: CCP}. In \cref{sec: SCCP} we discuss single chance-constrained problems, and in \cref{sec: JCCP} we extend this approach to solve joint chance-constrained problems. In \cref{sec: NumRes} we present numerical results. Finally, we make some concluding remarks in \cref{sec:concl}.

\section{Reformulation and approximation of the CCP} \label{sec: VaRReformulation}

The quantile of a random variable $Y$ at the level $1 - \alpha \in (0,1)$ is defined as
$$Q^{1 - \alpha}(Y) = \inf \{ y \in \mathbb{R} \mid \proba(Y \leq y) \geq 1 - \alpha \}.$$
Therefore, $\proba(C(x,\xi) \leq 0) \geq 1 - \alpha$ is equivalent to $Q^{1 - \alpha}(C(x,\xi)) \leq 0$, where $C: \mathbb{R}^n \times \mathbb{R}^s \rightarrow \mathbb{R}$ is a real valued function, $x \in \mathbb{R}^n$, and $\xi$ is a random vector taking values in $\mathbb{R}^s$. Here, we make a distinction between $c(x,\xi)$ and $C(x,\xi)$; $C(x,\xi)$ denotes a scalar-valued function and $c(x,\xi)$ is the vector-valued function defined in problem \cref{eq: CCP}. The reason for this distinction is that $Q^{1 - \alpha}(C(x,\xi))$ is only well defined if $C(x,\xi)$ is a scalar random variable. However, when $c$ is a vector-valued constraint, we set $C(x,\xi) = \max_{j = 1,\ldots,m} c_j(x,\xi)$ and obtain the following reformulation of problem \cref{eq: CCP},
\begin{align}
\min_{x \in X} &\quad f(x), \nonumber \\
\mathrm{s.t.} &\quad Q^{1 - \alpha}(C(x,\xi)) \leq 0. \nonumber
\end{align}

The advantage of rewriting the constraint as the quantile is that this reduces the ``flatness'' encountered when the probability function $\proba(C(x,\xi) \leq 0)$ is used as the constraint, see \cref{fig: ProbAndVaR} as an example. Now the feasible region is measured in the image of $C(x,\xi)$ and not in the bounded space $[0,1]$, which is the image of the probability function.

\begin{SCfigure}[][htbp]
\centering
  \includegraphics[width=0.5\linewidth]{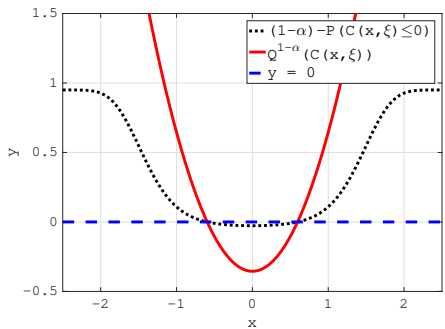}
  \caption{Comparison of $Q^{1-\alpha}(C(x,\xi))$ and $(1 - \alpha) - P(C(x,\xi) \leq 0)$ where $C(x,\xi) = x^2 - 2 + \xi$, $\xi \sim N(0,1)$, and $\alpha = 0.05$.}
  \label{fig: ProbAndVaR}
\end{SCfigure}

\cref{fig: ProbAndVaR} also shows how the feasible regions coincide for both formulations. Yet, although the two formulations are equivalent, solvers that depend on gradient information are likely to perform better when the quantile function is used, as we will see in the numerical experiments. This can be attributed to the fact that the linearization of the quantile function provides an approximation of the constraint that is reasonably accurate for a wider range than the linearization of the probabilistic constraint. The consequence of ``flatness'' is that a non-specialized gradient-based solver that starts at a point $x$ where $\proba(C(x,\xi)\leq 0) \approx 1$ takes a big step to decrease the objective, disregarding the constraint function. Thus, the solver is forced to reject many trial steps, which can lead to an increased number of iterations. {This observation has been made before in \cite[p.734]{VanackSaga14}, where the authors state that the chance constraint ``can define a gully-shaped feasible region that makes [the chance-constrained problem] hard to tackle with standard nonlinear programming solvers \cite{Mayer00}.'' Moreover, \cite{Mayer00} shows that this characteristic becomes more noticeable, at least for normal random variables, as the dimension of the random variable increases.}

To illustrate this claim, we compared the numerical performance of the nonlinear programming solver \texttt{Knitro} for each of the two formulations for the constraints $C(x,\xi) = x^2 - 2 + \xi$. We used $\texttt{Knitro}$ to maximize $x$ using: (1) the probabilistic constraint $\proba(C(x,\xi)\leq 0) = \Phi(2 - x^2) \geq 1 - \alpha$, and (2) the quantile constraint $Q^{1 - \alpha}(C(x,\xi)) = x^2 - 2 + \Phi^{-1}(1 - \alpha) \leq 0$; see \cref{fig: ProbAndVaR}. We chose $x_0 = 3$ as starting point. \cref{tab: ProbVsVaRAnlytic} illustrates the performance in terms of iteration count using the two types of constraints. Each column shows the number of iterations using different bounds on $x$. The results in the table suggest that the quantile function is more robust since the number of iterations is independent of the bounds and the solver always returned feasible and optimal solutions.

\begin{table}[htbp]
\centering
\begin{tabular}{c|cccc}
Bounds    & $[-1,1]$ & $[-10,10]$ & $[-100,100]$ & $(-\infty,\infty)$ \\ \hline\hline
$\proba(C(x,\xi)\leq 0) \geq 1-\alpha$ & 6        & 36         & 14           & $5^{*}$\\
$Q(C(x,\xi)) \leq 0$    & 6        & 6          & 6            & 6
\end{tabular}
\caption{Iterations performed by \texttt{Knitro} when maximizing $x$ using two different formulations to describe the feasible region and different bounds on $x$. $^*$: \texttt{Knitro} gives the following error message: \emph{``Convergence to an infeasible point. Problem appears to be locally infeasible.''}} \label{tab: ProbVsVaRAnlytic}
\end{table}


Motivated by the aforementioned arguments, we propose to work with the quantile formulation. However, as with the probabilistic formulation, an explicit expression to compute the quantile is usually not available or can be intractable to compute. To address this issue, we introduce a sample average approximation of the quantile in the following section.

Alternative approaches to overcome the convergence failures arising from the flatness of the probability constraints include a logarithmic transformation \cite{Mayer00} or the use of interpolation steps \cite{Szan88} within a tailored optimization algorithm.

\subsection{Smooth approximation of the quantile function} \label{sec: SmoothVaR}

A quantile estimator can be obtained by inverting an estimator of the cdf when the distribution is continuous. Let $\{ \xi_1,\ldots,\xi_N \}$ be an independent and identically distributed (i.i.d.) sample and consider the empirical cdf of $\{ C(x,\xi_1),C(x,\xi_2),\ldots, C(x,\xi_N) \}$ at a given point $x$,
$$\widetilde{F}^N(t;x) = \frac{1}{N}\sum_{i = 1}^N \mathbbm{1}(C(x,\xi_i) \leq t).$$
An estimate of the $(1 - \alpha)$-quantile (and hence $Q^{1-\alpha}(C(x,\xi))$) is obtained from a value $t$ such that $\widetilde{F}^N(t;x) \approx 1 - \alpha$. The $(1-\alpha)$-empirical quantile at $x$ is defined as
$$\widetilde{Q}^{1-\alpha}(C^N(x)) = \inf \left\lbrace y \mid \frac{1}{N} \sum_{i = 1}^N \mathbbm{1}\left( C(x,\xi_i) \leq y \right) \geq 1 - \alpha \right\rbrace = C_{[M]}(x),$$
for $M = \lceil (1 - \alpha) N \rceil$. Here, $\mathbbm{1}$ is the indicator function and $C_{[j]}$ denotes the $j$th smallest observation of the values $\{C(x,\xi_1),\ldots,C(x,\xi_N)\}$ for a fixed $x$.

There are two main drawbacks to using the empirical quantile as an approximation. First, the empirical approximation is not differentiable. The ``active scenario'' (the scenario $\xi_i$ such that $C_{[M]}(x) = C(x,\xi_i)$) changes for different values of $x$. Hence, even if the constraint $C(x,\xi)$ is smooth for a fixed value of $\xi$, this does not hold for $\widetilde{Q}^{1-\alpha}(C^N(x))$. Second, the aforementioned feature also introduces artificial local minima to the problem, as can be seen in \cref{fig: EmpTrue}. For a particular sample $\{\xi_1,\ldots,\xi_N\}$, this figure depicts the feasible region
$$\left\lbrace x \in \mathbb{R}^2 \mid Q^{1-\alpha}(C(x,\xi)) \leq 0 \right\rbrace,$$
and its approximation based on the empirical quantile function; where $C(x,\xi) = \xi^Tx - 1$, $\xi \sim N(\mu,\Sigma)$, $\mu = \mathbf{0} \in \mathbb{R}^2$, and $\Sigma$ is the $2 \times 2$ identity matrix.

\begin{figure}[htbp]
\centering
\begin{subfigure}{0.5\textwidth}
  \centering
  \includegraphics[width=0.99\linewidth]{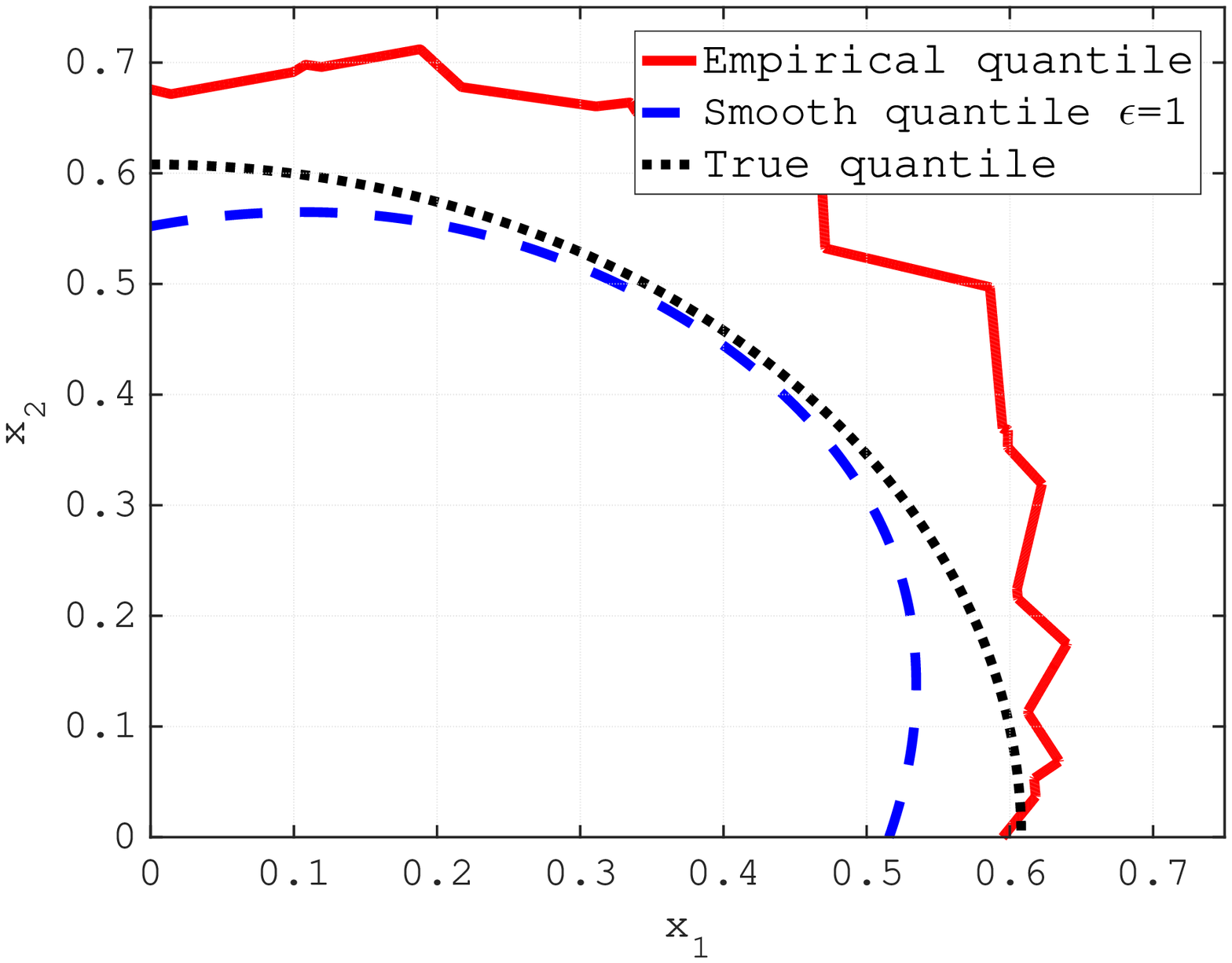}
  \caption{$N = 100$ observations.}
\end{subfigure}%
\begin{subfigure}{0.5\textwidth}
  \centering
  \includegraphics[width=0.99\linewidth]{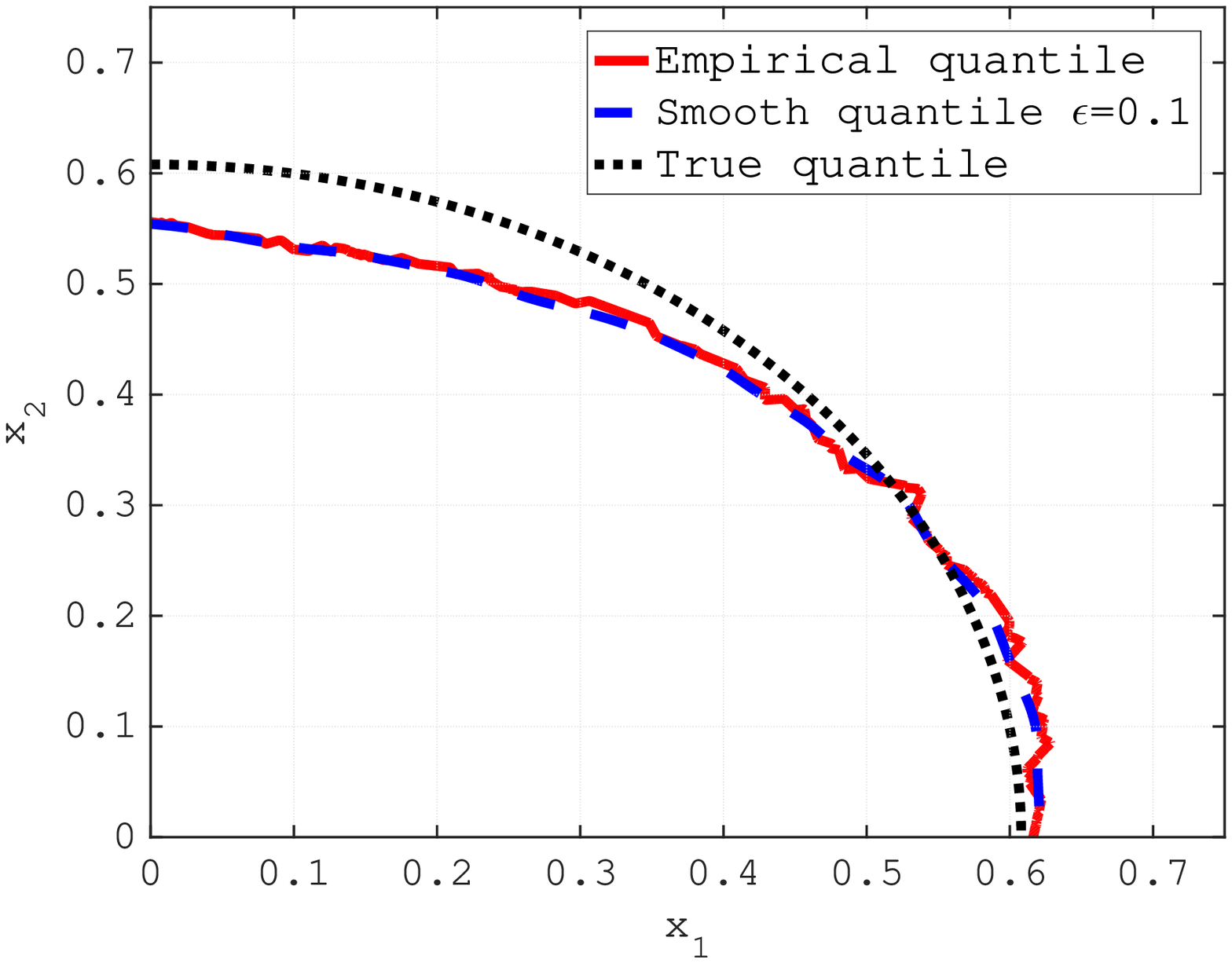}
  \caption{$N = 1,000$ observations.}
\end{subfigure}
\caption{Different quantile approximations ($\widetilde{Q}^{1-\alpha}(C^N(x))$ as a solid red line, and $Q_\epsilon^{1-\alpha}(C^N(x))$ as a dashed blue line) vs. true quantile $Q^{1-\alpha}(C(x,\xi))$.}
\label{fig: EmpTrue}
\end{figure}

Notice that even if the true feasible region is convex and has a smooth boundary (dotted curve), its approximation (solid curve) is non-smooth and introduces a lot of ``inward kinks'', which break the convexity of the feasible region. These ``inward kinks'' can potentially cause a local optimization algorithm to converge to inferior local minima more easily. Furthermore, as the number of scenarios increases, the number of non-smooth points increases, leading to more sub-optimal local minima.

In order to avoid the aforementioned problems, a smooth approximation of the quantile is obtained by estimating the cdf with a smooth function $\hat{F}$, and solving the equation $\hat{F}(t;x) = 1 - \alpha$, where $t$ is the approximation of $Q^{1 -\alpha}(C(x,\xi))$. Following an approach similar to \cite{GeleHoffKlop17} or \cite[Section 4.4.2]{ShapDentRusz09}, we approximate $F$ at a given point $x$ as
\begin{align} \label{eq: SmoothCDF}
F^N_\epsilon(t;x) = \frac{1}{N}\sum_{i = 1}^N \Gamma_\epsilon(C(x,\xi_i) - t),
\end{align}
where $\epsilon > 0$ is a parameter of the approximation,
\begin{align} \label{eq: SmoothIndicator}
\Gamma_\epsilon(y) = \left\{
	\begin{array}{lr}
		1,     & y \leq -\epsilon \\
  \gamma_\epsilon(y), & -\epsilon < y < \epsilon \\
		0,     &  y \geq \epsilon
	\end{array}
\right.
\end{align}
and $\gamma_\epsilon: [-\epsilon,\epsilon] \rightarrow [0,1]$ is a symmetric and strictly decreasing function such that it makes $\Gamma_\epsilon$ differentiable. With this choice of $\gamma_\epsilon$, $F^N_\epsilon(t;x)$ is a differentiable approximation of the empirical cdf, $\widetilde{F}^N(t;x)$.

Assuming that $C(x,\xi)$ is a continuous random variable for a fixed value of $x$, we can approximate the $(1-\alpha)$-quantile as the inverse of $F^N_\epsilon$ at the $1-\alpha$ level. Thus, we define our $\epsilon$-approximation to the $(1-\alpha)$-quantile at $x$ as the value $Q_\epsilon^{1-\alpha}$ such that the following equation holds
\begin{align}
\sum_{i = 1}^N \Gamma_\epsilon([C^N(x)]_i - Q_\epsilon^{1-\alpha}) = (1 - \alpha)N, \label{eq: SmoothVaR}
\end{align}
where $C^N(x) \in \mathbb{R}^N$ is a vector with elements $[C^N(x)]_i = C(x,\xi_i)$. However, the value $Q_\epsilon^{1-\alpha}$ is not unique if the equation $F^N_\epsilon(t;x) = 1-\alpha$ has more than one solution. In order to guarantee the existence of a unique solution for all possible choices of $\epsilon > 0$, we need to ensure that $[F^N_\epsilon]'(t;x) \neq 0$ when evaluated at the quantile of interest. The following proposition gives conditions under which $Q_\epsilon^{1-\alpha}$ is uniquely defined.

\begin{lemma} \label{prop: UniqueSolution}
Let $\gamma_\epsilon: [-\epsilon,\epsilon] \rightarrow [0,1]$ be a strictly decreasing function such that $\Gamma_\epsilon$ is differentiable, and let $(1-\alpha)N \notin \mathbb{Z}$. Then, equation \cref{eq: SmoothVaR} has a unique solution.
\end{lemma}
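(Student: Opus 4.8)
The plan is to analyze the scalar function $g(t) := \sum_{i = 1}^N \Gamma_\epsilon([C^N(x)]_i - t)$, which is the left-hand side of \cref{eq: SmoothVaR}, and to show that the equation $g(t) = (1-\alpha)N$ has exactly one root $t = Q_\epsilon^{1-\alpha}$. First I would collect the elementary properties of $\Gamma_\epsilon$ that follow from \cref{eq: SmoothIndicator} and the hypotheses: $\Gamma_\epsilon$ is continuous (being differentiable), it equals $1$ on $(-\infty,-\epsilon]$ and $0$ on $[\epsilon,\infty)$, and it is strictly decreasing on $(-\epsilon,\epsilon)$ because $\gamma_\epsilon$ is. In particular $\Gamma_\epsilon$ is non-increasing on $\mathbb{R}$, so for each fixed $a$ the map $t \mapsto \Gamma_\epsilon(a - t)$ is continuous and non-decreasing; summing over $i$ shows that $g$ is continuous and non-decreasing.

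For existence, I would note that $g(t) = 0$ for all $t \le \min_i [C^N(x)]_i - \epsilon$ and $g(t) = N$ for all $t \ge \max_i [C^N(x)]_i + \epsilon$. Since $\alpha \in (0,1)$ implies $0 < (1-\alpha)N < N$, the intermediate value theorem yields at least one $t^\star$ with $g(t^\star) = (1-\alpha)N$.

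Uniqueness is the crux, and this is where the assumption $(1-\alpha)N \notin \mathbb{Z}$ enters. Because $g$ is continuous and non-decreasing, the solution set $S := \{t : g(t) = (1-\alpha)N\}$ is a nonempty closed interval; I would suppose for contradiction that it is non-degenerate, say $S = [t_1,t_2]$ with $t_1 < t_2$. On $[t_1,t_2]$ the function $g$ is constant, and since it is a finite sum of non-decreasing functions, each summand $t \mapsto \Gamma_\epsilon([C^N(x)]_i - t)$ must itself be constant on $[t_1,t_2]$. Fixing $i$, the argument $[C^N(x)]_i - t$ then sweeps out a non-degenerate interval on which $\Gamma_\epsilon$ is constant; as $\Gamma_\epsilon$ is strictly decreasing on $(-\epsilon,\epsilon)$, that interval must be disjoint from $(-\epsilon,\epsilon)$ and hence, by connectedness, contained in $(-\infty,-\epsilon]$ (giving the constant value $1$) or in $[\epsilon,\infty)$ (giving the constant value $0$). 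Therefore $(1-\alpha)N = g(t_1)$ is a sum of $0$'s and $1$'s, an integer, contradicting the hypothesis. Hence $t_1 = t_2$ and the root is unique.

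I expect the last paragraph to be the only real obstacle: one must argue that a non-decreasing summand can be locally constant only while its argument lies in one of the two genuinely flat pieces of $\Gamma_\epsilon$, so it cannot ``stall'' inside $(-\epsilon,\epsilon)$; this is precisely where strict monotonicity of $\gamma_\epsilon$ is indispensable, and the boundary values $\gamma_\epsilon(-\epsilon) = 1$, $\gamma_\epsilon(\epsilon) = 0$ forced by differentiability of $\Gamma_\epsilon$ keep the ``$0$ or $1$'' conclusion valid even when the argument interval abuts $\pm\epsilon$. Everything else is a routine application of monotonicity and the intermediate value theorem.
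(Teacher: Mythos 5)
Your proposal is correct, and it reaches the same contradiction as the paper (local flatness of the left-hand side of \cref{eq: SmoothVaR} would force the sum to take an integer value, clashing with $(1-\alpha)N \notin \mathbb{Z}$), but it gets there by a different mechanism. The paper differentiates: it sets $\psi(Q) = \sum_i \Gamma_\epsilon(C(x,\xi_i)-Q) - (1-\alpha)N$, notes $\psi' \geq 0$, and argues that any root with $\psi'(Q)=0$ would have every argument $C(x,\xi_i)-Q$ outside $(-\epsilon,\epsilon)$, making the sum an integer; uniqueness then follows because a nondegenerate interval of roots would contain such points. Your argument is derivative-free: you observe that the root set is a closed interval by continuity and monotonicity, that constancy of the sum on a nondegenerate interval forces each (non-decreasing) summand to be constant there, and that strict monotonicity of $\gamma_\epsilon$ then confines each argument interval to the flat pieces where $\Gamma_\epsilon \in \{0,1\}$. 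This buys you two things: (i) you only use strict monotonicity of $\gamma_\epsilon$, exactly as hypothesized, whereas the paper's step ``$\Gamma_\epsilon'(y)=0 \Rightarrow |y|\geq\epsilon$'' tacitly assumes $\gamma_\epsilon' < 0$ throughout $(-\epsilon,\epsilon)$, which strict monotonicity alone does not guarantee (it does hold for the quartic kernel \cref{eq: gamma_eps} used later); and (ii) you explicitly establish existence via the intermediate value theorem and the limits $0$ and $N$ of the sum, a point the paper's proof leaves implicit. Both proofs are short; yours is marginally more careful under the stated hypotheses, the paper's connects directly to the derivative formula \cref{eq: ImplicitGradient} used afterwards.
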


\begin{proof}
Let $\psi(Q) = \sum_{i = 1}^N \Gamma_\epsilon(C(x,\xi_i) - Q) - (1 - \alpha)N$. We have that $\psi'(Q) = -\sum_{i = 1}^N \Gamma'_\epsilon (C(x,\xi_i) - Q)$. Given that $\gamma_\epsilon$ is strictly decreasing and $\Gamma'_\epsilon(y) = 0$ for all $y \geq \epsilon$ and $y \leq -\epsilon$, we conclude that $\psi'(Q) \geq 0$ for all $Q$, which means that $\psi$ is an increasing function. If we prove that $\psi'(Q) \neq 0$ for all $Q$ such that $\psi(Q) = 0$, then we show that there is a unique solution of equation \cref{eq: SmoothVaR}.

Assume $\psi'(Q) = 0$, then $Q \leq C(x,\xi_i) - \epsilon$ or $Q \geq C(x,\xi_i) + \epsilon$ for all scenarios $i\in \{1,\ldots,N\}$. This means that $\sum_{i = 1}^N \Gamma_\epsilon (C(x,\xi_i)-Q) \in \mathbb{N}$. Because $(1 - \alpha)N \notin \mathbb{N}$, it means that $\psi(Q) \neq 0$, and therefore any $Q$ such that $\psi'(Q) = 0$ cannot be a solution.
\end{proof}

\begin{remark} \label{rem: NotIntegerB}
The assumption $(1-\alpha)N \notin \mathbb{Z}$ of \cref{prop: UniqueSolution} is mild because if it is not satisfied one can either increase $N$, perturb $\alpha$, or add a constant $b \in (0,1)$ to the left hand side of \cref{eq: SmoothVaR}. In fact, if a constant is added to the left-hand side, the impact will disappear as $N \rightarrow \infty$.
\end{remark}

\Cref{prop: UniqueSolution} shows that \cref{eq: SmoothVaR} indeed implicitly defines a function $Q^{1-\alpha}_\epsilon$ that maps a vector $C^N(x) \in \mathbb{R}^N$ to the root of \cref{eq: SmoothVaR}. In order to find solutions for \cref{eq: CCP}, we focus on solving the following problem
\begin{align} \label{eq: VaRFormulation}
\min_{x \in X} &\quad f(x) \\
\mathrm{s.t.} &\quad Q_\epsilon^{1-\alpha}(C^N(x)) \leq 0. \nonumber
\end{align}
The above formulation is a sample average approximation of the original problem \cref{eq: CCP}. Convergence and feasibility of this approximation are analyzed in \cref{sec: Feasibility}.

One could have approximated constraint \cref{eq: ProbaConst} with $F^N_\epsilon(0;x) \geq 1-\alpha$, where $F^N_\epsilon$ is given by \cref{eq: SmoothCDF}. However, as mentioned at the beginning of this section and as shown in the numerical results (\cref{sec: ReinsuranceSCCP}), working with the probabilistic constraint, or its estimate, instead of the quantile leads to more iterations or failure of the optimization algorithm. Therefore, in contrast with previous works that express the chance constraint using variations of \cref{eq: SmoothCDF}, we only use $F_\epsilon^N$ to obtain an approximation of the quantile and work directly with the quantile function.

While other options are possible, our choice of the smoothed indicator function, $\Gamma_\epsilon$, is motivated by a kernel estimation of the cdf. In \cite[p. 326]{AlemBolaGuil12}, the authors present the following estimator
\begin{align*}
\hat{F}^N_\epsilon(t;x) &= \int_{-\infty}^t \frac{1}{N\epsilon} \sum_{i = 1}^N \vartheta \left( \frac{u - C(x,\xi_i)}{\epsilon} \right) \, du = \frac{1}{N} \sum_{i = 1}^N \Theta \left( \frac{t - C(x,\xi_i)}{\epsilon} \right),
\end{align*}
where $\vartheta(\cdot)$ is a real-valued non-negative function, known as kernel function, that integrates to one over all of its domain (features that make it a probability density function) and is symmetric around 0. The function $\Theta(\cdot)$ is the cdf of $\vartheta(\cdot)$.

If $\vartheta(\cdot)$ has a finite support $[-1,1]$, then $\Theta \left( \frac{t-c(x,\xi)}{\epsilon}\right)$ has the structure of the $\Gamma_\epsilon$ function in \cref{eq: SmoothIndicator}. Furthermore, according to~\cite[p. 326]{Azza81}, if $\epsilon \rightarrow 0$ as $N \rightarrow \infty$, it can be proven that asymptotically
\begin{align}
\mathbb{E}\left[ \left( \hat{F}^N_\epsilon(t;x) - F(t;x) \right)^2 \right] \approx \frac{F(t;x)\left[ 1 - F(t;x)\right]}{N} -u(t) + \epsilon^4 v(t), \label{eq: MSESmoothKernel}
\end{align}
where
$$u(t) = f(t;x)\frac{\epsilon}{N}\left( 1 - \int_{-1}^1 \Gamma_1^2 (y) \, dy \right), \quad v(t) = \left( \frac{1}{2} f'(t;x) \int_{-1}^1 y^2 \Gamma_1' (y) \, dy \right)^2$$
The first two terms in \cref{eq: MSESmoothKernel} correspond to the asymptotic variance and the third term is the squared asymptotic bias \cite[p. 257]{AlemBolaGuil12}. This result indicates that large values of $\epsilon$ help reduce the variance of the estimator but increase its bias, while small values behave exactly opposite. Furthermore, if we compare \cref{eq: MSESmoothKernel} to the mean squared error (MSE) of the empirical distribution $\widetilde{F}^N(t;x)$, which equals $F(t;x)\left[ 1 - F(t;x)\right]/N$, we conclude that our estimation has a smaller MSE than the empirical distribution if $\epsilon$ is chosen optimally. Azzalini in \cite{Azza81} also shows that the properties of $\hat{F}^N_\epsilon$ extend to the quantile estimator, i.e., to the root of $\hat{F}^N_\epsilon(t;x) = 1 - \alpha$.

These observations regarding the behavior of the quantile approximation coincide with the findings of our numerical experiments. The solutions returned by the solver for large values of $\epsilon$ are generally feasible but sub-optimal due to the bias introduced by $\epsilon$. Large values of $\epsilon$ usually generate a conservative approximation of the quantile function; however, the solutions have less variance. We also find that solving the approximation with larger values of $\epsilon$ tends to require fewer iterations. Thus, there is a trade-off in the choice of $\epsilon$; small values of $\epsilon$ reduce bias, whereas larger values of $\epsilon$ reduce the variance and the computational effort. In \cref{sec: ChoosingEpsilon}, we suggest a method for selecting an appropriate value of $\epsilon$ for an instance.
Our proposal to find a positive $\epsilon$ that balances bias and variance contrasts with prior work \cite{CaoZava17,GeleHoffKlop17,HongYangZhan11,HuHongZhan13} that uses smoothing approximations which focused on driving $\epsilon$ to zero.

For the experiments in \cref{sec: NumRes} we use the quartic kernel \cite[p. 353]{ScotTapiThom77} to define function $\Gamma_\epsilon$. This gives the following $\gamma_\epsilon$;
\begin{align}
    \gamma_\epsilon(y) = \frac{15}{16}\left( -\frac{1}{5}\left(\frac{y}{\epsilon}\right)^5 + \frac{2}{3}\left(\frac{y}{\epsilon}\right)^3 - \left(\frac{y}{\epsilon}\right) + \frac{8}{15} \right), \label{eq: gamma_eps}
\end{align}
which makes $\Gamma_\epsilon(\cdot)$ a twice continuously differentiable function.

\begin{note}
For simplicity of notation, we assume in the following sections that the quantile of interest is always the $(1-\alpha)$-quantile and drop the superscript of the quantile approximation. Therefore, the $(1-\alpha)$-quantile $\epsilon$-approximation is denoted just as $Q_\epsilon$ for now onward.
\end{note}

\section{Convergence and feasibility of the approximation} \label{sec: Feasibility}

In this section we study how the solutions obtained by our method approximate those of the real problem \cref{eq: CCP}. Throughout the rest of the paper we make the following assumption.


\begin{assumption} \label{assu: ContinuousCDF}
For each $x \in X$, the random variable $C(x,\xi)$ has a continuous distribution.
\end{assumption}

Notice that, if $C(x,\xi)$ is not a continuous random variable, we can make \cref{assu: ContinuousCDF} hold for an
approximation of the problem by adding a continuous random variable $\bar{\xi}$ to $C(x,\xi)$ with expectation zero and small variance. This could be treated as a fixed approximation, or one could solve a sequence of approximations in which the variance of $\bar{\xi}$ is decreased to zero.

Let the feasible region of the original problem \cref{eq: CCP} be defined as 
$$X_\alpha = \left\lbrace x \in X \mid F(0;x) \geq 1 - \alpha \right\rbrace,$$
where $F(t;x) = \mathbb{P}(C(x,\xi) \leq t)$. Moreover, define $F_{\epsilon}(t;x)$, for $\epsilon > 0$, as
\begin{align}
F_\epsilon(t;x) = \int_{\mathbb{R}^s} \Gamma_{\epsilon} \left( C(x,\xi) - t \right) \,d\proba_\xi, \label{eq: CDFEps}
\end{align}
where $\Gamma_\epsilon$ is the smooth approximation of the indicator function given by \cref{eq: SmoothIndicator}.

Now, let $\{\xi_i\}_{i = 1}^N$ be a random sample of the variable $\xi$ of size $N$. As in \cref{sec: SmoothVaR}, we approximate $F_\epsilon$ with $F_\epsilon^N$, and denote the feasible region defined by the approximation as follows
\begin{align}
    X_{\epsilon,\delta}^{N,t} = \left\lbrace x \in X \mid F_\epsilon^{N}(-t;x) \geq 1 - \delta \right\rbrace, \quad \delta \in [0,1], \enskip t \in \mathbb{R}. \label{eq: ApproxFeasReg}
\end{align}

Notice that if $\delta = \alpha$ and $t = 0$, then $X_{\epsilon,\alpha}^{N,0}$ corresponds to the feasible region of \cref{eq: VaRFormulation}. We consider this approximation more generally with $\delta \leq \alpha$ and $t \geq 0$ as we will show how the parameters $t$ and $\delta$ in \cref{eq: ApproxFeasReg} can be chosen to increase the likelihood that the solution to the approximate model is feasible to the true model. First, notice that if $t = \epsilon$, then $F_\epsilon^N(-t;x) \leq \widetilde{F}^N(0;x)$. This implies that a point $x \in X_{\epsilon,\alpha}^{N,\epsilon}$ satisfies $\widetilde{F}^N(0;x) \geq 1 - \alpha$, so the solutions returned by our method are feasible for the empirical approximation of the feasible region. Moreover, the $\delta$ parameter can be used, as in \cite{LuedAhme08}, to ensure that the solutions obtained are feasible with high probability.

We start the analysis of \cref{eq: VaRFormulation} by showing asymptotic convergence of the optimal value of the approximation to the optimal value of the true problem. In particular, we analyze convergence of the approximation with respect to both the smoothing parameter and the sample size, whereas previous results, to the best of our knowledge, only consider convergence with respect to the smoothing parameter (\cite{GeleHoffKlop17, HongYangZhan11, HuHongZhan13}) or the sample size (\cite{LuedAhme08, PagnAhmeShap09}) separately. Then, we show complementary analysis of the approximation problem that indicates that, with appropriate choice of parameters, any feasible solution to the approximation problem will be feasible with high probability for sufficiently large sample size.

\subsection{Asymptotic convergence}

In the following theorems we state results analogous to Proposition 2.1 and 2.2 in \cite{PagnAhmeShap09}, thus the proofs we present are similar to those in that paper. We make the following assumptions.

\begin{assumption} \label{assu: Caratheodory}
$C(x,\xi)$ is a Carath\'eodory function, i.e., $C(x,\cdot)$ is measurable for every $x \in X$ and $C(\cdot,\xi)$ continuous for a.e. $\xi \in \Xi$.
\end{assumption}

\begin{assumption} \label{assu: ConvSeqToOptimum}
There exists a globally optimal solution of \cref{eq: CCP}, $\bar{x}$, such that for any $\delta > 0$ there is $x \in X$ such that $\|x - \bar{x}\| \leq \delta$ and $F(0;x) > 1 - \alpha$.
\end{assumption}

This assumption implies that there exists a sequence $\{x_k\}_{k=1}^\infty \subseteq X$ that converges to an optimal solution $\bar{x}$ such that $F(0;x_k) > 1 - \alpha$ for all $k \in \mathbb{N}$.




\begin{theorem} \label{teo: F_Conv}
Suppose that \cref{assu: ContinuousCDF,assu: Caratheodory} hold. Then, $F_\epsilon^N(0;x)$ converges to $F(0;x)$ uniformly on any compact set $U \subseteq X$ w.p.1, i.e.,
$$\sup_{x \in U} \left\lvert F_\epsilon^N(0;x) - F(0;x)\right\lvert \rightarrow 0, \quad\text{w.p.1}, \quad\text{as }N \rightarrow \infty \text{ and } \epsilon \rightarrow 0.$$
\end{theorem}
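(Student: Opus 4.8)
The plan is to decompose the quantity $\sup_{x\in U}\lvert F_\epsilon^N(0;x)-F(0;x)\rvert$ via the triangle inequality into a \emph{sampling error} and a \emph{smoothing error}, and to control each separately:
\[
\sup_{x\in U}\lvert F_\epsilon^N(0;x)-F(0;x)\rvert
\;\le\;
\underbrace{\sup_{x\in U}\lvert F_\epsilon^N(0;x)-F_\epsilon(0;x)\rvert}_{\text{(I) sampling error}}
\;+\;
\underbrace{\sup_{x\in U}\lvert F_\epsilon(0;x)-F(0;x)\rvert}_{\text{(II) smoothing error}},
\]
where $F_\epsilon$ is the population smoothed cdf from \cref{eq: CDFEps}. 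For term (I), the idea is a uniform law of large numbers (ULLN). Write $F_\epsilon^N(0;x)=\frac1N\sum_{i=1}^N \Gamma_\epsilon(C(x,\xi_i))$, an average of i.i.d.\ realizations of the random function $x\mapsto \Gamma_\epsilon(C(x,\xi))$, whose expectation is $F_\epsilon(0;x)$. Since $C(\cdot,\xi)$ is continuous for a.e.\ $\xi$ (\cref{assu: Caratheodory}) and $\Gamma_\epsilon$ is continuous, the composition $\Gamma_\epsilon(C(\cdot,\xi))$ is a Carath\'eodory function; moreover it is uniformly bounded by $1$. Standard results (e.g.\ the ULLN for Carath\'eodory integrands, as in Shapiro--Dentcheva--Ruszczy\'nski or as used in Propositions 2.1--2.2 of \cite{PagnAhmeShap09}) then give $\sup_{x\in U}\lvert F_\epsilon^N(0;x)-F_\epsilon(0;x)\rvert\to 0$ w.p.1 as $N\to\infty$, for each fixed $\epsilon$. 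One must be slightly careful that $\epsilon$ is simultaneously driven to zero; I would handle this by noting the bound in term (I) can be made uniform in $\epsilon$ over $\epsilon\le\epsilon_0$, or equivalently by taking $N\to\infty$ first along the prescribed joint limit and invoking that the ULLN rate does not degrade as $\epsilon\downarrow 0$ because the envelope bound ($\lvert\Gamma_\epsilon\rvert\le 1$) is $\epsilon$-independent.

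For term (II), the smoothing error, I would argue pointwise convergence plus a compactness/equicontinuity upgrade to uniform convergence. Pointwise: for fixed $x$, $F_\epsilon(0;x)=\mathbb E[\Gamma_\epsilon(C(x,\xi))]$, and since $\Gamma_\epsilon(y)\to\mathbbm 1(y\le 0)$ pointwise for $y\ne 0$ as $\epsilon\to 0$, and $\proba(C(x,\xi)=0)=0$ by \cref{assu: ContinuousCDF}, dominated convergence (dominating function $\equiv 1$) yields $F_\epsilon(0;x)\to F(0;x)$. Quantitatively, $\lvert F_\epsilon(0;x)-F(0;x)\rvert\le \proba(-\epsilon< C(x,\xi)<\epsilon)=F(\epsilon;x)-F(-\epsilon;x)$, since $\Gamma_\epsilon$ and the indicator agree outside $(-\epsilon,\epsilon)$ and both lie in $[0,1]$. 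To make this uniform over $x\in U$, I would use that $F(t;x)$ is jointly continuous on $\mathbb R\times U$ (continuity in $t$ from \cref{assu: ContinuousCDF}, continuity in $x$ from the Carath\'eodory property of $C$ together with continuity of $F$) and that $U$ is compact, so $F(\epsilon;\cdot)-F(-\epsilon;\cdot)$ converges to $0$ uniformly on $U$ as $\epsilon\to 0$ by Dini's theorem (the family is monotone in $\epsilon$ and the limit $0$ is continuous). Combining, term (II) $\to 0$ deterministically.

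The main obstacle I anticipate is the joint-limit bookkeeping in term (I): making the ULLN hold while $\epsilon=\epsilon(N)\to 0$ rather than being held fixed. The clean way around it is to observe that the class $\{\Gamma_\epsilon(C(\cdot,\xi)):0<\epsilon\le\epsilon_0\}$ shares the single integrable envelope $1$ and consists of Carath\'eodory functions, and either (a) invoke a ULLN that is uniform over this class (giving a bound independent of which $\epsilon$ is used), or (b) dominate $\lvert\Gamma_\epsilon(y)-\mathbbm 1(y\le0)\rvert\le \mathbbm 1(\lvert y\rvert<\epsilon)\le \mathbbm 1(\lvert y\rvert<\epsilon_0)$ and reduce term (I) to the ordinary ULLN for $\frac1N\sum_i\mathbbm 1(C(x,\xi_i)\le 0)$ plus a population term $\le F(\epsilon;x)-F(-\epsilon;x)$ already controlled in (II). A secondary, minor point is establishing joint continuity of $F(t;x)$ on $\mathbb R\times U$ to license the Dini argument; this follows from \cref{assu: ContinuousCDF,assu: Caratheodory} by a routine dominated-convergence argument, and I would state it as a small lemma or inline remark rather than belabor it.
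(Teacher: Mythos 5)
Your decomposition is the same one the paper uses: split $\sup_{x\in U}|F_\epsilon^N(0;x)-F(0;x)|$ into the SAA error $|F_\epsilon^N-F_\epsilon|$ and the smoothing bias $|F_\epsilon-F|$, control the first by a uniform law of large numbers and the second deterministically. Where you diverge is in how each piece is controlled. The paper resolves the joint limit $(N\to\infty,\ \epsilon\to 0)$ by folding $\epsilon$ into the parameter space: it verifies that $(x,\epsilon)\mapsto\Gamma_\epsilon(C(x,\xi))$ is continuous w.p.1 on $U\times[0,1]$ \emph{including at} $\epsilon=0$ (this is exactly where \cref{assu: ContinuousCDF}, i.e.\ $\proba(C(x,\xi)=0)=0$, is used), and then applies the ULLN of \cite[Theorem 7.53]{ShapDentRusz09} once on the compact set $U\times[0,1]$; the bias term is dispatched by citing \cite[Theorem 4.91]{ShapDentRusz09}. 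Your option (a) is this same idea, but as justified it is not yet a proof: an $\epsilon$-independent envelope alone does not make the ULLN uniform over $\epsilon$ — the missing ingredient is precisely the a.s.\ joint continuity at $\epsilon=0$ (or a bracketing/VC argument), so you would still have to supply that step. Your option (b), by contrast, is a genuinely more elementary and self-contained route: the bound $|F_\epsilon(0;x)-F(0;x)|\le F(\epsilon;x)-F(-\epsilon;x)$ together with continuity of $F(t;\cdot)$ (from no atoms plus the Carath\'eodory property) and Dini's theorem handles the bias without citing Theorem 4.91, and the reduction $|\Gamma_\epsilon(y)-\mathbbm 1(y\le 0)|\le\mathbbm 1(|y|<\epsilon_0)$ transfers the sampling error to empirical cdfs of $C(x,\xi)$ at fixed thresholds. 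One small repair is needed there: controlling the empirical band mass $\frac1N\sum_i\mathbbm 1(|C(x,\xi_i)|<\epsilon_0)$ uniformly in $x$ requires the ULLN for the indicators at the shifted thresholds $\pm\epsilon_0$ (each valid because $\proba(C(x,\xi)=\pm\epsilon_0)=0$), not only at threshold $0$ as written. With that addition, your argument is complete; it trades the paper's two textbook citations for a more explicit, quantitative treatment, while the paper's product-space trick is what lets it avoid your threshold bookkeeping entirely.
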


\begin{proof} We prove uniform convergence of $F_\epsilon^N(0;x)$ to $F(0;x)$ by first showing uniform convergence of $F_\epsilon^N(0;x)$ to $F_\epsilon(0;x)$ over $x \in U$ and $\epsilon \in [0,1]$ as $N \rightarrow \infty$, and then uniform convergence of $F_\epsilon(0;x)$ to $F(0;x)$ over $x \in U$ as $\epsilon \rightarrow 0$.

To prove uniform convergence of $F_\epsilon^N(0;x)$ to $F_\epsilon(0;x)$ in the compact set $U \subseteq X$ for all $\epsilon \in [0,1]$, we start by showing that $\Gamma_\epsilon(C(x,\xi))$ is continuous w.p.1 for all $x \in U$ and $\epsilon \in [0,1]$. Notice that $\Gamma_\epsilon(C(x,\xi))$ is continuous w.p.1 for all $x \in U$ and $\epsilon > 0$. This follows because $\Gamma_\epsilon(\cdot)$ is continuous and $C(x,\xi)$ is a Carath\'eodory function. It remains to prove that $\Gamma_\epsilon(C(x,\xi))$ is continuous in $x$ and $\epsilon$ at $\epsilon = 0$ w.p.1. Let $x$ and $\xi$ be such that $C(x,\xi)$ is continuous and strictly positive. Then, there exists $\epsilon_0 > 0$ and $\delta_0 >0$ such that $C(x,\xi) > \epsilon_0$ and $C(y,\xi) > \epsilon_0$ for all $y \in \mathcal{B}_{\delta_0}(x)$ (where $\mathcal{B}_{\delta_0}(x) = \{y \mid \|x-y\| \leq \delta_0 \}$). Thus, for all $\rho > 0$, $|\Gamma_0(C(x,\xi)) - \Gamma_\epsilon(C(y,\xi))| = 0 < \rho$ for all $0\leq \epsilon \leq \epsilon_0$ and $y \in \mathcal{B}_{\delta_0}(x)$. This shows that $\Gamma_0(C(x,\xi))$ is continuous when $C(x,\xi) > 0$. An analogous proof can be derived if $C(x,\xi)$ is strictly negative. Finally, because $C(x,\xi) \neq 0$ w.p.1 for all $x \in X$ from \cref{assu: ContinuousCDF}, we conclude that $\Gamma_\epsilon(C(x,\xi))$ is continuous w.p.1 for all $x \in U$ and $\epsilon \in [0,1]$.

Uniform convergence of $F_\epsilon^N(0;x)$ to $F_\epsilon(0;x)$ over $x \in U$ and $\epsilon \in [0,1]$ is then a result of applying Theorem 7.53 in \cite{ShapDentRusz09}. This is because function $\Gamma_\epsilon(C(x,\xi))$ is continuous w.p.1 for all $x \in U$ and $\epsilon \in [0,1]$, $|\Gamma_\epsilon(C(x,\xi))| \leq 1$ for all $x \in X$, and the sample is i.i.d.

Uniform convergence of $F_\epsilon(0;x)$ to $F(0;x)$ in the compact set $U \subseteq X$ as $\epsilon \rightarrow 0$ is a result of applying Theorem 4.91 in \cite{ShapDentRusz09}. This comes from \cref{assu: ContinuousCDF,assu: Caratheodory}.

Combining these two results, for every $\delta > 0$, we can find $\epsilon_{\delta} > 0$ and $N_\delta \in \mathbb{N}$ such that $|F_{\epsilon}(0;x) - F(0;x)| < \delta/2$ and $| F_\epsilon^N(0;x) - F_{\epsilon}(0;x)| < \delta/2$, w.p.1, for all $x \in U$, $\epsilon \leq \min\{1,\epsilon_{\delta}\}$, and $N \geq N_\delta$, which ensures that
$$\sup_{x \in U} \left\lvert F_\epsilon^N(0;x) - F(0;x)\right\lvert < \delta.$$
We can therefore conclude that $F_\epsilon^N(0;x)$ converges to $F(0;x)$ uniformly in the compact set $U$.
\end{proof}

We denote the sets of optimal solutions for problems \cref{eq: CCP} and \cref{eq: VaRFormulation} as $S$ and $S_\epsilon^N$, respectively. We also denote $v^*$ and $v_\epsilon^N$ the optimal values of \cref{eq: CCP} and \cref{eq: VaRFormulation}, respectively. We show next that, under some assumptions, $v^N_\epsilon$ and $S^N_\epsilon$ converge w.p.1 to their counterparts of the true problem as $N$ increases and $\epsilon$ decreases.

\begin{theorem} \label{teo: AsymConv}
Suppose that $X$ is compact, the function $f$ is continuous, and \cref{assu: ContinuousCDF,assu: Caratheodory,assu: ConvSeqToOptimum} hold. Then, $v_\epsilon^N \rightarrow v^*$ and $\mathbb{D}(S_\epsilon^N,S) \rightarrow 0$ w.p.1 as $N \rightarrow \infty$ and $\epsilon \rightarrow 0$.
\end{theorem}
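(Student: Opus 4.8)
The plan is to run the standard two-sided consistency argument for sample-average approximations, with the uniform convergence in \cref{teo: F_Conv} as the workhorse, mimicking the proofs of Propositions 2.1--2.2 in \cite{PagnAhmeShap09}. First I would collect two preliminaries. Since $\Gamma_\epsilon$ is continuous and bounded by $1$ and $C(\cdot,\xi)$ is continuous for a.e.\ $\xi$, dominated convergence makes $F_\epsilon(0;\cdot)$ continuous on $X$; by the uniform convergence $F_\epsilon(0;\cdot)\to F(0;\cdot)$ on compacta (established inside the proof of \cref{teo: F_Conv}), $F(0;\cdot)$ is continuous on $X$. Hence $X_\alpha$ is closed, and since $X$ is compact and $f$ continuous, $S\neq\emptyset$ and $v^*$ is attained. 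Also, \cref{teo: F_Conv} supplies an event $\Omega_0$ of probability one on which $\sup_{x\in X}\lvert F_\epsilon^N(0;x)-F(0;x)\rvert\to0$ as $N\to\infty$, $\epsilon\to0$; everything below is carried out pathwise on $\Omega_0$.

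For the upper bound $\limsup v_\epsilon^N\le v^*$, I would use \cref{assu: ConvSeqToOptimum} to get $x_k\to\bar x$ in $X$ with $F(0;x_k)>1-\alpha$. For fixed $k$, since $F_\epsilon^N(0;x_k)\to F(0;x_k)>1-\alpha$, the point $x_k$ is feasible for \cref{eq: VaRFormulation} once $N$ is large and $\epsilon$ small, so $v_\epsilon^N\le f(x_k)$; thus $\limsup v_\epsilon^N\le f(x_k)$ for all $k$, and letting $k\to\infty$ and using continuity of $f$ yields $\limsup v_\epsilon^N\le f(\bar x)=v^*$. In particular $S_\epsilon^N\neq\emptyset$ for $N$ large and $\epsilon$ small.

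For the lower bound and the set convergence, take any $x_\epsilon^N\in S_\epsilon^N$. Along any sequence of parameters $(N,\epsilon)$, compactness of $X$ gives a subsequence with $x_\epsilon^N\to x^*\in X$; from $F_\epsilon^N(0;x_\epsilon^N)\ge1-\alpha$, the uniform convergence, and continuity of $F(0;\cdot)$ I obtain $F(0;x^*)\ge1-\alpha$, i.e.\ $x^*\in X_\alpha$, hence $f(x^*)\ge v^*$. Since $f(x_\epsilon^N)=v_\epsilon^N$ and $f$ is continuous, $\lim v_\epsilon^N=f(x^*)\ge v^*$ along the subsequence, which together with the upper bound forces $v_\epsilon^N\to v^*$ (the limit being the same along every subsequence) and $f(x^*)=v^*$, i.e.\ $x^*\in S$. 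Then $\mathbb{D}(S_\epsilon^N,S)\to0$ follows by a subsequence-contradiction argument: if it failed there would be $x_\epsilon^N\in S_\epsilon^N$ with $\mathrm{dist}(x_\epsilon^N,S)\ge\eta>0$ along some sequence, and passing to a convergent subsequence as above produces $x^*\in S$ with $\mathrm{dist}(x^*,S)\ge\eta$, a contradiction.

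The hard part will be the upper bound, and more precisely the role of \cref{assu: ConvSeqToOptimum}: because $F_\epsilon^N(0;\bar x)$ may dip just below the threshold $1-\alpha$ even when $F(0;\bar x)=1-\alpha$, the true optimizer $\bar x$ itself need not be feasible for the approximation, and without a Slater-type hypothesis the approximate feasible region could be empty near $\bar x$. \cref{assu: ConvSeqToOptimum} is exactly what repairs this, furnishing strictly feasible points $x_k\to\bar x$ that remain feasible after the perturbation. The only other point requiring care is that the limit is joint in $N\to\infty$ and $\epsilon\to0$ rather than iterated, but this is already built into \cref{teo: F_Conv}, so no additional work is needed there.
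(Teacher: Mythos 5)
Your proposal is correct and follows essentially the same route as the paper: uniform convergence from \cref{teo: F_Conv}, the $\limsup$ bound via the strictly feasible sequence supplied by \cref{assu: ConvSeqToOptimum}, the $\liminf$ bound via cluster points of approximate minimizers, and set convergence by a standard compactness/subsequence argument (which the paper simply delegates to Theorem 5.3 of \cite{ShapDentRusz09} while you spell it out). The only detail worth adding is the paper's explicit remark that $F_\epsilon^N(0;\cdot)$ is continuous, so the approximate feasible set is compact and $S_\epsilon^N$ is indeed nonempty once it contains a point.
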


\begin{proof}
Notice that for any given $N$ and $\epsilon$, a point $x$ in the feasible set of \cref{eq: VaRFormulation} satisfies $F_{\epsilon}^{N}(0;x) \geq 1 - \alpha$. Thus, we can show convergence of the feasible set of \cref{eq: VaRFormulation} to the feasible set of \cref{eq: CCP} by analyzing the function $F_{\epsilon}^{N}$.

Now, by \cref{assu: ConvSeqToOptimum}, the set $S$ is nonempty and there is $x \in X$ such that $F(0;x) > 1-\alpha$. Because $F_\epsilon^N(0;x)$ converges to $F(0;x)$, by \cref{teo: F_Conv}, there exists $\epsilon_0$ small enough and $N_0$ large enough such that $F_{\epsilon_0}^{N_0}(0;x) \geq 1 - \alpha$ w.p.1. Because $F_{\epsilon_0}^{N_0}$ is continuous in $x$ and $X$ is compact, the feasible set of the approximation problem \cref{eq: VaRFormulation} is compact as well, and hence $S_\epsilon^N$ is nonempty w.p.1 for all $N \geq N_0$ and $\epsilon \leq \epsilon_0$.


Let $\{N_k\}_{k = 1}^\infty \geq N_0$ and $\{\epsilon_k\}_{k = 1}^\infty \leq \epsilon_0$ be two sequences such that
$N_k \rightarrow \infty$ and $\epsilon_k \rightarrow 0$. Let $\hat{x}_k \in S_{\epsilon_k}^{N_k}$, i.e., $\hat{x}_k \in
X$, $F_{\epsilon_k}^{N_k}(0;\hat{x}_k) \geq 1 - \alpha$ and $v_{\epsilon_k}^{N_k} = f(\hat{x}_k)$. Let $\hat x\in X$ be
any cluster point of $\{x_k\}_{k = 1}^\infty$ and $\{x_l\}_{l = 1}^\infty$ be a subsequence converging to $\hat x$.  Because $F_{\epsilon_l}^{N_l}(0;x)$ is continuous and converges uniformly to $F(0;x)$ on $X$ w.p.1 by \cref{teo: F_Conv}, we have that  
$F(0;\hat{x}) = \lim_{l \rightarrow \infty} F_{\epsilon_l}^{N_l}(0;\hat{x}_l)$, w.p.1.
Hence, $F(0;\hat{x}) \geq 1 - \alpha$ and $\hat{x}$ is feasible for the true problem, and therefore $f(\hat{x}) \geq
v^*$. Also $f(\hat{x}_l) \rightarrow f(\hat{x})$ w.p.1 which means that $\lim_{l\rightarrow \infty} v_{\epsilon_l}^{N_l}
\geq v^*$ w.p.1.  Since this is true for any cluster point of $\{x_k\}_{k = 1}^\infty$ in the compact set $X$, we have
\begin{align}
    \liminf_{k\rightarrow \infty} v_{\epsilon_k}^{N_k} \geq v^*, \quad\text{w.p.1}. \label{eq: LimInfConv}
\end{align}


Now, by \cref{assu: ConvSeqToOptimum}, there exists an optimal solution $\bar x$ and a sequence $\{x_l\}_{l = 1}^\infty$ converging to $\bar x$ with $F(0;x_l)>1-\alpha$.  Since $F_{\epsilon_k}^{N_k}(0;{x}_l)$ converges to $F(0;x_l)$ w.p.1, there exist $K(l)$ such that $F_{\epsilon_k}^{N_k}(0;{x}_l)\geq 1-\alpha$ for every $k\geq K(l)$ and every $l$, w.p.1.  Without loss of generality we can assume that $K(l)<K(l+1)$ for every $l$ and define the sequence $\{\bar x_k\}_{k = K(1)}^\infty$ by setting $\bar x_k=x_l$ for all $k$ and $l$ with $K(l)\leq k< K(l+1)$.  We then have $F_{\epsilon_k}^{N_k}(0;\bar{x}_k)\geq 1-\alpha$, which implies $v_{\epsilon_k}^{N_k} \leq f(\bar x_k) $ for all $k\geq K(1)$. Since $f$ is continuous, we have that
\begin{align}
    \limsup_{k \rightarrow \infty} v_{\epsilon_k}^{N_k} \leq f(\bar{x}) = v^*, \quad\text{w.p.1}. \label{eq: LimSupConv}
\end{align}
It follows from \cref{eq: LimInfConv} and \cref{eq: LimSupConv} that $v_{\epsilon_k}^{N_k} \rightarrow v^*$ w.p.1.
To show that  $\mathbb{D}(S_\epsilon^N,S) \rightarrow 0$ w.p.1.\ we defer to Theorem 5.3 in \cite{ShapDentRusz09} which can easily be adapted to our context.
\end{proof}

Notice that \cref{teo: AsymConv} only applies to the globally optimal objective function and solutions. 




\subsection{Finite sample feasibility of approximation problem solutions} \label{sec: FiniteSampleFeas}

We now consider conditions under which an optimal solution for the approximation problem \cref{eq: VaRFormulation}, if it exists, is feasible for the true problem \cref{eq: CCP}. These results generalize those presented in Section 2.2 in \cite{LuedAhme08}, which considers the empirical cdf instead of a smooth approximation; therefore, some of our proofs follow from the ones presented there.

We will make use of Hoeffding's inequality.

\begin{theorem}[Hoeffding's Inequality \cite{Hoef63}]

Let $Y_1,\ldots,Y_N$ be independent random variables, with $\proba(Y_i \in [a_i,b_i]) = 1$, where $a_i \leq b_i$ for $i = 1,\ldots,N$. Then if $t>0$,
$$\proba \left\lbrace \sum_{1 = 1}^N \left( Y_i - \E[Y_i] \right) \geq tN \right\rbrace \leq \exp \left\lbrace -\frac{2N^2t^2}{\sum_{i=1}^N (b_i - a_i)^2} \right\rbrace.$$

\end{theorem}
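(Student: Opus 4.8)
The plan is to use the classical Chernoff bounding technique. Write $S = \sum_{i=1}^N (Y_i - \E[Y_i])$. First I would fix a free parameter $s > 0$ and apply Markov's inequality to the nonnegative random variable $e^{sS}$: since $S \geq tN$ implies $e^{sS} \geq e^{stN}$, we get $\proba\{S \geq tN\} \leq e^{-stN}\,\E[e^{sS}]$. Because $Y_1,\dots,Y_N$ are independent, so are the centered variables $Y_i - \E[Y_i]$, and the exponential moment factorizes, $\E[e^{sS}] = \prod_{i=1}^N \E\!\left[e^{s(Y_i - \E[Y_i])}\right]$. So everything reduces to controlling the moment generating function of a single bounded, mean-zero random variable.

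The heart of the argument, and the step I expect to be the main obstacle, is Hoeffding's Lemma: if $Z$ is a random variable with $\E[Z] = 0$ and $\proba(Z \in [a,b]) = 1$, then $\E[e^{sZ}] \leq \exp\!\left(s^2(b-a)^2/8\right)$ for every real $s$. To prove it I would exploit convexity of $z \mapsto e^{sz}$ on $[a,b]$: for $z \in [a,b]$, $e^{sz} \leq \frac{b-z}{b-a}e^{sa} + \frac{z-a}{b-a}e^{sb}$; taking expectations and using $\E[Z] = 0$ eliminates the linear term and yields $\E[e^{sZ}] \leq \frac{b}{b-a}e^{sa} - \frac{a}{b-a}e^{sb}$. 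Writing the right-hand side as $e^{L(u)}$ with $u = s(b-a)$, one checks $L(0) = 0$, $L'(0) = 0$, and $L''(u) \leq 1/4$ for all $u$; a second-order Taylor expansion with Lagrange remainder then gives $L(u) \leq u^2/8$, which is exactly the claimed bound. Applying this with $Z = Y_i - \E[Y_i]$, which takes values in an interval of length $b_i - a_i$, gives $\E[e^{s(Y_i - \E[Y_i])}] \leq \exp\!\left(s^2(b_i - a_i)^2/8\right)$.

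Combining these estimates, $\proba\{S \geq tN\} \leq \exp\!\left(-stN + \tfrac{s^2}{8}\sum_{i=1}^N (b_i - a_i)^2\right)$ for every $s > 0$. The final step is to choose $s$ optimally: the exponent is a quadratic in $s$ minimized at $s^\ast = 4tN \big/ \sum_{i=1}^N (b_i - a_i)^2$, which is positive since $t > 0$, and substituting $s^\ast$ collapses the exponent to $-2N^2 t^2 \big/ \sum_{i=1}^N (b_i - a_i)^2$. This is precisely the stated inequality. Apart from the Taylor-expansion estimate $L(u) \leq u^2/8$ inside Hoeffding's Lemma, every step is routine algebra.
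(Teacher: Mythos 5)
Your proof is correct: the Chernoff bounding step, the convexity-based Hoeffding Lemma with the bound $L(u)\leq u^2/8$, and the optimal choice $s^\ast = 4tN/\sum_{i=1}^N(b_i-a_i)^2$ all combine exactly as you describe to give the stated exponent $-2N^2t^2/\sum_{i=1}^N(b_i-a_i)^2$. The paper itself gives no proof of this theorem---it is quoted directly from Hoeffding's 1963 paper---and your argument is precisely the classical one underlying that reference, so there is nothing to reconcile.
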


\begin{assumption} \label{assu: PositiveM}
Let the scalars $t \in \mathbb{R}$ and $\delta \in [0,\alpha]$ be chosen such that $M := \inf_{x \in X} M_x > 0$, where $M_x = F(0;x) - F_\epsilon(-t;x) + (\alpha - \delta)$.
\end{assumption}

The above assumption can always be satisfied if we pick $t = \epsilon$ and $\delta < \alpha$. This claim can easily be observed from the definition of $F_\epsilon(-t;x)$ in \cref{eq: CDFEps}, which implies that $F_\epsilon(-\epsilon;x) \leq F(0;x)$ for all $x \in X$.

\begin{theorem}[Probabilistic feasibility guarantee] \label{teo: GeneralFeas}

Let $x \in X$ be such that $x \notin X_\alpha$, and suppose that \cref{assu: PositiveM} holds. Then
$$\proba \left( F^N_\epsilon(-t;x) \geq 1 - \delta \right) \leq \exp\left\lbrace -2N M_x^2 \right\rbrace.$$

\end{theorem}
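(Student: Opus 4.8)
The plan is to apply Hoeffding's inequality to the random variables $Y_i := \Gamma_\epsilon(C(x,\xi_i) - (-t)) = \Gamma_\epsilon(C(x,\xi_i) + t)$, which form an i.i.d.\ sample for the fixed point $x$ under consideration. Since $\Gamma_\epsilon$ takes values in $[0,1]$ by \cref{eq: SmoothIndicator}, we have $\proba(Y_i \in [0,1]) = 1$, so $a_i = 0$, $b_i = 1$, and $\sum_{i=1}^N (b_i - a_i)^2 = N$. By definition \cref{eq: SmoothCDF}, $F^N_\epsilon(-t;x) = \frac{1}{N}\sum_{i=1}^N Y_i$, and by definition \cref{eq: CDFEps}, $\E[Y_i] = F_\epsilon(-t;x)$.

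First I would rewrite the event of interest. The event $\{F^N_\epsilon(-t;x) \geq 1 - \delta\}$ is the event $\{\sum_{i=1}^N Y_i \geq (1-\delta)N\}$, equivalently $\{\sum_{i=1}^N (Y_i - \E[Y_i]) \geq (1-\delta)N - N F_\epsilon(-t;x)\}$. The gap on the right-hand side is $\tau N$ with $\tau = 1 - \delta - F_\epsilon(-t;x)$. The key step is to show $\tau \geq M_x$: since $x \notin X_\alpha$ means $F(0;x) < 1-\alpha$, and $M_x = F(0;x) - F_\epsilon(-t;x) + (\alpha - \delta)$, we get $1 - \delta - F_\epsilon(-t;x) = M_x - F(0;x) + (1-\alpha) > M_x$ — wait, that direction gives $\tau > M_x$ only if $F(0;x) < 1-\alpha$, which is exactly the hypothesis, so indeed $\tau > M_x > 0$ (positivity of $M_x$ coming from \cref{assu: PositiveM}). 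Then Hoeffding's inequality with $t \leftarrow \tau$ (note $\tau > 0$) gives $\proba(\sum_{i=1}^N (Y_i - \E[Y_i]) \geq \tau N) \leq \exp(-2N^2\tau^2 / N) = \exp(-2N\tau^2) \leq \exp(-2N M_x^2)$, where the last inequality uses $\tau > M_x > 0$ so that $\tau^2 > M_x^2$.

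The only subtlety — and really the only place one must be careful — is the sign bookkeeping in the chain $\tau = 1 - \delta - F_\epsilon(-t;x) \geq F(0;x) - F_\epsilon(-t;x) + (\alpha-\delta) = M_x$, which relies on $1 - \delta \geq F(0;x) + (\alpha - \delta)$, i.e.\ $1 - \alpha \geq F(0;x)$; this is precisely $x \notin X_\alpha$ (using the convention that $X_\alpha = \{x : F(0;x) \geq 1-\alpha\}$, so its complement satisfies $F(0;x) < 1-\alpha \leq 1-\alpha$, and the non-strict inequality suffices). There is no hard part here; the statement is an essentially direct consequence of Hoeffding once the quantities are matched up correctly, and one should just double-check that the required hypothesis $t>0$ in Hoeffding's theorem corresponds to $\tau>0$, which is guaranteed by \cref{assu: PositiveM} together with $x\notin X_\alpha$.
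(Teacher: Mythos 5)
Your proposal is correct and takes essentially the same route as the paper: both apply Hoeffding's inequality to $Y_i = \Gamma_\epsilon(C(x,\xi_i)+t)$, identify $\E[Y_i]=F_\epsilon(-t;x)$, and use $F(0;x) < 1-\alpha$ (from $x\notin X_\alpha$) to relate the deviation threshold $1-\delta-F_\epsilon(-t;x)$ to $M_x$. The only cosmetic difference is that the paper enlarges the event first and invokes Hoeffding at level $M_x$, whereas you invoke it at level $\tau$ and then compare exponents via $\tau \geq M_x > 0$; the two manipulations are equivalent.
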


\begin{proof}
Let $Y_i = \Gamma_\epsilon(C(x,\xi_i) + t)$ for all $i = 1,\ldots,N$, then $\mathbb{P}(Y_i \in [0,1]) = 1$ and $\E[Y_i] = F_\epsilon(-t;x)$. Given that $x$ is not a feasible solution for the original problem, $F(0;x) < 1 - \alpha$. We therefore have
\begin{align*}
\proba \left( F^N_\epsilon(-t;x) \geq 1 - \delta \right) &= \proba \left( F^N_\epsilon(-t;x) - F_\epsilon(-t;x) \geq 1 - \alpha + \alpha - \delta - F_\epsilon(-t;x) \right) \\
 &\leq \proba \left( F^N_\epsilon(-t;x) - F_\epsilon(-t;x) > F(0;x) - F_\epsilon(-t;x)  + (\alpha - \delta) \right) \\
 &= \proba \left( F^N_\epsilon(-t;x) - F_\epsilon(-t;x) > M_x \right) \\
 &= \proba \left( \sum_{i = 1}^N \left( Y_i - E[Y_i] \right) \geq M_x N\right) \\
 &\leq \exp\left\lbrace -\frac{2N^2 M_x^2}{\sum_{i = 1}^N \left( 1 - 0 \right)^2} \right\rbrace = \exp\left\lbrace -2N M_x^2 \right\rbrace,
\end{align*}
where the last inequality is obtained from Hoeffding's Inequality.
\end{proof}

\cref{teo: GeneralFeas} shows that if we shift the approximation of the cdf appropriately and/or decrease the risk level from $\alpha$ to $\delta \leq \alpha$, then a point $x$ that is feasible for the original problem \cref{eq: CCP} is also in $X_{\epsilon,\delta}^{N,t}$ with a probability that increases exponentially with the size of the sample $N$. Furthermore, under certain conditions, we can set $t = 0$ and $\delta = \alpha$ and still get the results stated above. This can be seen in the following corollary.

\begin{corollary} \label{prop: FeasOfOneX}

Let $x \in X$ be such that $ Y_x = C(x,\xi)$ is a continuous random variable with a strictly decreasing probability density function (pdf) $h_x(y)$ in the interval $(-\epsilon, \epsilon)$. If $F(0;x) < 1 - \alpha$, then
$$\proba \left( F^N_\epsilon(0;x) \geq 1 - \alpha \right) \leq \exp\left\lbrace -2N \beta_x^2 \right\rbrace,$$
where $\beta_x = F(0;x) - F_\epsilon(0;x) > 0$.

\end{corollary}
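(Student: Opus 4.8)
The plan is to read \cref{prop: FeasOfOneX} as the special case $t=0$, $\delta=\alpha$ of \cref{teo: GeneralFeas}. For that choice the quantity $M_x$ from \cref{assu: PositiveM} collapses to $M_x = F(0;x) - F_\epsilon(0;x) + (\alpha-\alpha) = \beta_x$, so once we know $\beta_x>0$ the bound $\proba(F_\epsilon^N(0;x)\ge 1-\alpha)\le \exp\{-2N\beta_x^2\}$ is exactly the conclusion of \cref{teo: GeneralFeas} (and, if one is wary of invoking the global \cref{assu: PositiveM} for a single point, the short Hoeffding estimate in its proof goes through verbatim with $Y_i=\Gamma_\epsilon(C(x,\xi_i))$, using only $M_x>0$). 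Hence the whole substance of the corollary is the inequality $F_\epsilon(0;x)<F(0;x)$, and that is where the strictly decreasing density enters.

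First I would write both sides as integrals against $h_x$. By definition $F(0;x)=\int_{-\infty}^0 h_x(y)\,dy$, and from \cref{eq: CDFEps}, $F_\epsilon(0;x)=\E[\Gamma_\epsilon(C(x,\xi))]=\int_{\mathbb{R}}\Gamma_\epsilon(y)h_x(y)\,dy$. Since $\Gamma_\epsilon$ coincides with $\mathbbm{1}(\cdot\le 0)$ outside $(-\epsilon,\epsilon)$ — it equals $1$ on $(-\infty,-\epsilon]$ and $0$ on $[\epsilon,\infty)$ — the pieces over $\mathbb{R}\setminus(-\epsilon,\epsilon)$ cancel and
\[
\beta_x = \int_{-\epsilon}^{0}\bigl(1-\gamma_\epsilon(y)\bigr)h_x(y)\,dy - \int_{0}^{\epsilon}\gamma_\epsilon(y)h_x(y)\,dy .
\]
Next I would fold the two integrals together using the symmetry assumed for $\gamma_\epsilon$ in \cref{eq: SmoothIndicator}, which amounts to $\Gamma_\epsilon(y)+\Gamma_\epsilon(-y)=1$, i.e.\ $1-\gamma_\epsilon(y)=\gamma_\epsilon(-y)$ for $y\in(-\epsilon,\epsilon)$; substituting $z=-y$ in the first integral gives
\[
\beta_x = \int_{0}^{\epsilon}\gamma_\epsilon(z)\,\bigl(h_x(-z)-h_x(z)\bigr)\,dz .
\]
On $(0,\epsilon)$ we have $\gamma_\epsilon(z)>0$ (strict decrease of $\gamma_\epsilon$ together with $\gamma_\epsilon(\epsilon^-)=0$) and $h_x(-z)-h_x(z)>0$ (strict decrease of $h_x$ on $(-\epsilon,\epsilon)$, since $-z<z$); the integrand is therefore strictly positive on a set of positive measure, so $\beta_x>0$. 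Plugging $t=0$, $\delta=\alpha$ into \cref{teo: GeneralFeas} then finishes the proof.

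I expect the only mildly delicate points to be bookkeeping: pinning down the symmetry convention for $\gamma_\epsilon$ (point symmetry about $(0,\tfrac12)$, consistent with the kernel motivation in \cref{sec: SmoothVaR}, so that $1-\gamma_\epsilon(y)=\gamma_\epsilon(-y)$), and confirming that strictness survives the integration — it does, because both factors of the integrand in the merged expression are strictly positive throughout the nondegenerate interval $(0,\epsilon)$. No deeper obstacle is anticipated; the result is essentially \cref{teo: GeneralFeas} plus this one-line positivity computation.
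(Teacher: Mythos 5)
Your proposal is correct and follows essentially the same route as the paper: apply \cref{teo: GeneralFeas} with $t=0$, $\delta=\alpha$ so that $M_x=\beta_x$, and establish $\beta_x>0$ by cancelling the contributions outside $(-\epsilon,\epsilon)$, using the point symmetry $1-\Gamma_\epsilon(y)=\Gamma_\epsilon(-y)$ to fold the integral into $\int_0^\epsilon \Gamma_\epsilon(y)\bigl(h_x(-y)-h_x(y)\bigr)\,dy>0$. The paper's proof is this identical computation, so no further comment is needed.
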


\begin{proof}

This follows from \cref{teo: GeneralFeas} for $t = 0$, $\delta = \alpha$, and noticing that $\beta_x = M_x = F(0;x) - F_\epsilon(0;x) > 0$ because
\begin{align*}
F(0;x) - F_\epsilon(0;x) &= \int_{\mathbb{R}^s} \mathbbm{1} \left( C(x,\xi) \right) \,d\proba_\xi - \int_{\mathbb{R}^s} \Gamma_{\epsilon} \left( C(x,\xi) \right) \,d\proba_\xi \\
    &= \int_{-\infty}^\infty \mathbbm{1} \left( y \right) h_x(y) \,dy - \int_{-\infty}^\infty \Gamma_\epsilon(y) h_x(y) \,dy \\
    &= \int_{-\epsilon}^0 \left( 1 - \Gamma_\epsilon(y) \right) h_x(y) \,dy - \int_{0}^\epsilon \Gamma_\epsilon(y) h_x(y) \,dy \\
 &= \int_{-\epsilon}^0 \Gamma_\epsilon(-y) h_x(y) \,dy - \int_{0}^\epsilon \Gamma_\epsilon(y) h_x(y) \,dy \\
 &= \int_{0}^\epsilon \Gamma_\epsilon(y) h_x(-y) \,dy - \int_{0}^\epsilon \Gamma_\epsilon(y) h_x(y) \,dy \\
 &= \int_{0}^\epsilon \Gamma_\epsilon(y) [h_x(-y) - h_x(y)] \,dy > 0.
\end{align*}

The fourth equality is the result of defining $\Gamma_\epsilon$ as the integral of a symmetric pdf, which implies that $1 - \Gamma_\epsilon(y) = \Gamma_\epsilon(-y)$. The last inequality is due to the fact that the pdf of $Y_x$ is strictly decreasing in the interval $(-\epsilon, \epsilon)$, which implies that $h_x(-y) > h_x(y)$ for $y \in (0,\epsilon)$, and due to $\Gamma_\epsilon(y) = \gamma_\epsilon(y) > 0$ for $y \in [0,\epsilon)$.
\end{proof}

One of the properties that follows from \cref{prop: FeasOfOneX} is that, for $\epsilon > 0$, our approximation is asymptotically conservative. In other words, for a fixed $\epsilon > 0$ and under a monotonicity assumption, a point $x$ that is feasible for the original problem \cref{eq: CCP} is also feasible for the approximation problem \cref{eq: VaRFormulation} with a probability that increases exponentially with the sample size $N$. It can be shown that for the vast majority of the common continuous distributions such as exponential, normal and Weibull, to mention a few, there exists a neighborhood of the solution such that the assumption in \cref{prop: FeasOfOneX} holds.

\begin{corollary}

Assume that $\Gamma_\epsilon$ is defined as in \cref{eq: SmoothIndicator}, where $\gamma_\epsilon(\cdot)$ is given by \cref{eq: gamma_eps}. Let $x \in X$ be such that $ Y_x = C(x,\xi)$ is a continuous random variable with pdf $h_x(y)$. If there exists $m_1, m_2 > 0$ such that $h_x(y) \geq -m_1y + h_x(0)$ for all $y \in (-\epsilon, 0]$, and $h_x(y) \leq -m_2y + h_x(0)$ for all $y \in [0, \epsilon)$. Then, we have that
$$\beta_x \geq \frac{(m_1+m_2)\epsilon^2}{28},$$
and thus, for all $x \notin X_\alpha$,
$$\proba \left( F^N_\epsilon(0;x) \geq 1 - \alpha \right) \leq \exp\left\lbrace -N (m_1 + m_2)^2 \epsilon^4/14 \right\rbrace.$$

\end{corollary}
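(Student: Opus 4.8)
The plan is to apply the previous corollary (\cref{prop: FeasOfOneX}), so the whole task reduces to deriving the explicit lower bound $\beta_x \geq (m_1+m_2)\epsilon^2/28$ from the two one-sided affine bounds on the density $h_x$; once that is in hand, substituting $\beta_x$ into the exponential tail bound of \cref{prop: FeasOfOneX} and squaring gives the stated probability estimate immediately. So the entire content is a one-variable integral estimate.

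First I would recall from the proof of \cref{prop: FeasOfOneX} that
$$\beta_x = F(0;x) - F_\epsilon(0;x) = \int_0^\epsilon \Gamma_\epsilon(y)\,[h_x(-y) - h_x(y)]\,dy.$$
Using the hypotheses $h_x(-y) \geq m_1 y + h_x(0)$ for $y \in [0,\epsilon)$ (rewriting the assumption on $(-\epsilon,0]$ with $-y$ in place of $y$) and $h_x(y) \leq -m_2 y + h_x(0)$ for $y \in [0,\epsilon)$, I would bound the integrand from below: $h_x(-y) - h_x(y) \geq (m_1 + m_2)\,y$ on $(0,\epsilon)$. Since $\Gamma_\epsilon(y) = \gamma_\epsilon(y) \geq 0$ on $[0,\epsilon)$, this yields
$$\beta_x \geq (m_1+m_2)\int_0^\epsilon y\,\gamma_\epsilon(y)\,dy.$$
It remains to compute $\int_0^\epsilon y\,\gamma_\epsilon(y)\,dy$ with $\gamma_\epsilon$ given by \cref{eq: gamma_eps}. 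Substituting $u = y/\epsilon$ turns this into $\epsilon^2 \int_0^1 u\,\gamma_1(u)\,du$ where $\gamma_1(u) = \tfrac{15}{16}(-\tfrac15 u^5 + \tfrac23 u^3 - u + \tfrac{8}{15})$; integrating the polynomial $u\gamma_1(u)$ term by term over $[0,1]$ produces a rational constant, which I would claim equals $1/28$. Thus $\beta_x \geq (m_1+m_2)\epsilon^2/28$.

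Finally, plugging $\beta_x \geq (m_1+m_2)\epsilon^2/28$ into $\proba(F^N_\epsilon(0;x) \geq 1-\alpha) \leq \exp\{-2N\beta_x^2\}$ from \cref{prop: FeasOfOneX} and using $2(1/28)^2 = 1/392$... here I would double-check the arithmetic: the paper claims exponent $-N(m_1+m_2)^2\epsilon^4/14$, which corresponds to $2\beta_x^2 \geq (m_1+m_2)^2\epsilon^4/14$, i.e. to $\beta_x \geq (m_1+m_2)\epsilon^2/\sqrt{28}$; since $(1/28)^2 \cdot 2 = 2/784 = 1/392 < 1/14$, the bound $\beta_x \geq (m_1+m_2)\epsilon^2/28$ is in fact \emph{stronger} than needed, so the stated conclusion follows a fortiori (one simply uses $2\beta_x^2 \geq (m_1+m_2)^2\epsilon^4/392 \geq$ is not $\geq /14$ — so actually the exponent should be $/392$, and I would report $\exp\{-N(m_1+m_2)^2\epsilon^4/392\}$ unless I have miscomputed the integral; the main point is that a clean polynomial computation pins down the constant).

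The only genuine obstacle is the bookkeeping in the polynomial integration $\int_0^1 u\,\gamma_1(u)\,du$ and the matching of the resulting constant with the $1/28$ and $1/14$ appearing in the statement; everything else is a direct appeal to \cref{prop: FeasOfOneX} together with monotone manipulation of the integrand. I would carry out that integral carefully, verify $h_x$ automatically satisfies $F(0;x) - F_\epsilon(0;x) > 0$ so that \cref{prop: FeasOfOneX}'s hypothesis (strict-decrease on $(-\epsilon,\epsilon)$, or at least the weaker affine-sandwich condition used here) is met, and then assemble the two displayed inequalities.
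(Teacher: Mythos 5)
Your proof is correct and is essentially the paper's own argument: the paper likewise starts from the identity $\beta_x = \int_0^\epsilon \Gamma_\epsilon(y)\,[h_x(-y)-h_x(y)]\,dy$ taken from the proof of \cref{prop: FeasOfOneX}, bounds the bracket below by $(m_1+m_2)y$ using the two affine hypotheses, and uses $\int_0^\epsilon \Gamma_\epsilon(y)\,y\,dy = \epsilon^2/28$, which your substitution $u=y/\epsilon$ and term-by-term integration of \cref{eq: gamma_eps} reproduce exactly ($\tfrac{15}{16}(-\tfrac{1}{35}+\tfrac{2}{15}-\tfrac{1}{3}+\tfrac{4}{15})=\tfrac{1}{28}$). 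Your arithmetic flag is also right, and your self-correction (not the initial ``a fortiori'' remark) is the correct direction: substituting $\beta_x \geq (m_1+m_2)\epsilon^2/28$ into the Hoeffding bound $\exp\{-2N\beta_x^2\}$ yields the exponent $-N(m_1+m_2)^2\epsilon^4/392$, whereas the stated $-N(m_1+m_2)^2\epsilon^4/14$ is a strictly stronger claim that the derived $\beta_x$ estimate does not support; the $1/14$ appears to come from computing $2\cdot\tfrac{1}{28}$ instead of $2\cdot\tfrac{1}{28^2}$, and the paper's proof itself only establishes the $\beta_x$ inequality without justifying that constant. One small bookkeeping point: since the affine sandwich does not literally imply the strictly decreasing density hypothesis of \cref{prop: FeasOfOneX}, the cleaner citation is \cref{teo: GeneralFeas} with $t=0$ and $\delta=\alpha$, for which $M_x=\beta_x>0$ is exactly what your lower bound supplies; this is the same implicit step the paper takes, so it is not a gap in your argument.
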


\begin{proof}
From the definition of $\beta_x = F(0;x) - F_\epsilon(0;x)$, we get
\begin{align*}
F(0;x) - F_\epsilon(0;x) &= \int_{0}^\epsilon \Gamma_\epsilon(y) [h_x(-y) - h_x(y)] \,dy \\
 &\geq \int_{0}^\epsilon \Gamma_\epsilon(y) [(m_1 + m_2) y] \,dy \\
 &= \frac{(m_1+m_2)\epsilon^2}{28},
\end{align*}
where $\int_{0}^\epsilon \Gamma_\epsilon(y) y \,dy = \int_{0}^\epsilon \gamma_\epsilon(y) y \,dy = \epsilon^2/28$.
\end{proof}

We now consider conditions under which the entire feasible region of the approximation problem is a subset of the feasible region of problem \cref{eq: CCP}. For these results to hold we need to make additional assumptions. We begin with a preliminary result that holds under the assumption that the feasible region $X$ is finite.

\begin{theorem} \label{teo: FiniteFeas}

Suppose that $X$ is finite, and that \cref{assu: PositiveM} holds. Then
$$\proba \left( X_{\epsilon,\delta}^{N,t} \subseteq X_\alpha \right) \geq 1 - |X \setminus X_\alpha |\exp\left\lbrace -2NM^2 \right\rbrace.$$

\end{theorem}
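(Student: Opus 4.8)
The plan is to obtain the statement as a union bound over the finitely many points of $X \setminus X_\alpha$, applying \cref{teo: GeneralFeas} to each such point. First I would observe that the event $\{X_{\epsilon,\delta}^{N,t} \subseteq X_\alpha\}$ fails exactly when there exists some $x \in X \setminus X_\alpha$ that nonetheless lies in $X_{\epsilon,\delta}^{N,t}$, i.e., satisfies $F^N_\epsilon(-t;x) \geq 1 - \delta$. So the complement event is
\[
\left\{ X_{\epsilon,\delta}^{N,t} \not\subseteq X_\alpha \right\} = \bigcup_{x \in X \setminus X_\alpha} \left\{ F^N_\epsilon(-t;x) \geq 1 - \delta \right\}.
\]
Since $X$ is finite, $X \setminus X_\alpha$ is a finite set, so the union bound gives
\[
\proba\left( X_{\epsilon,\delta}^{N,t} \not\subseteq X_\alpha \right) \leq \sum_{x \in X \setminus X_\alpha} \proba\left( F^N_\epsilon(-t;x) \geq 1 - \delta \right).
\]

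Next I would apply \cref{teo: GeneralFeas} to each term: for each $x \in X \setminus X_\alpha$ we have $x \notin X_\alpha$ and \cref{assu: PositiveM} holds by hypothesis, so $\proba(F^N_\epsilon(-t;x) \geq 1-\delta) \leq \exp\{-2NM_x^2\}$. By \cref{assu: PositiveM}, $M = \inf_{x \in X} M_x > 0$, hence $M_x \geq M > 0$ and therefore $\exp\{-2NM_x^2\} \leq \exp\{-2NM^2\}$ for every such $x$. Combining, the sum is bounded by $|X \setminus X_\alpha| \exp\{-2NM^2\}$, and taking complements yields
\[
\proba\left( X_{\epsilon,\delta}^{N,t} \subseteq X_\alpha \right) \geq 1 - |X \setminus X_\alpha|\exp\left\lbrace -2NM^2 \right\rbrace,
\]
as claimed.

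There is essentially no hard step here; the result is a routine union-bound consequence of the per-point guarantee already established. The only point requiring a modicum of care is the measurability/well-definedness of the event $\{X_{\epsilon,\delta}^{N,t} \subseteq X_\alpha\}$, which is immediate because $X$ is finite, so this event is a finite Boolean combination of the measurable events $\{F^N_\epsilon(-t;x) \geq 1-\delta\}$ (each measurable since $F^N_\epsilon(-t;x)$ is a measurable function of the sample $\{\xi_i\}_{i=1}^N$). One should also note that $X \setminus X_\alpha$ could in principle be empty, in which case the bound reads $\proba(\cdots) \geq 1$, trivially true; and if $X \subseteq X_\alpha$ already then the statement is vacuous but still correct.
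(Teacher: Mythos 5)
Your proposal is correct and follows exactly the paper's argument: take complements, apply the union bound over the finite set $X \setminus X_\alpha$, invoke \cref{teo: GeneralFeas} for each point, and use $M_x \geq M > 0$ from \cref{assu: PositiveM} to uniformize the exponential bound. No differences worth noting.
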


\begin{proof}

Using the result from \cref{teo: GeneralFeas}, we get the following
\begin{align*}
\proba \left( X_{\epsilon,\delta}^{N,t} \not\subseteq X_\alpha \right) &= \proba \left(\exists x \in X_{\epsilon,\delta}^{N,t} \text{ such that } x \notin X_\alpha \right) \\
 &\leq \sum_{x \in X\setminus X_\alpha} \mathbb{P}\left(x \in X_{\epsilon,\delta}^{N,t} \right) \leq \sum_{x \in X\setminus X_\alpha} \exp \left( -2 N M_x^2 \right) \\
 &\leq |X \setminus X_\alpha |\exp\left\lbrace -2NM^2 \right\rbrace.
\end{align*}
\end{proof}

We now extend this result to the case in which $X$ is not finite. For this result, we make the following Lipschitz continuity assumption on $C$.

\begin{assumption} \label{assu: LipchitzCont}
There exists $L > 0$ such that
$$|C(x,\xi) - C(y,\xi)| \leq L \| x - y \|_{\infty}, \quad \forall x,y \in X, \text{ w.p.1}.$$
\end{assumption}

This assumption was made in \cite{LuedAhme08}. The proof stated below follows the same ideas as Theorem 10 in \cite{LuedAhme08}.

\begin{theorem} \label{teo: LipschitzFeas}

Assume that $X$ is bounded with diameter $D$ and \cref{assu: LipchitzCont} holds. Also, suppose that \cref{assu: PositiveM} holds with $t > 0$ and $\delta > 0$. Then, for any $\beta \in \left( 0, \delta \right]$, the following is true:
$$\proba \left( X_{\epsilon,(\delta-\beta)}^{N,2t} \subseteq X_\alpha \right) \geq 1 - \left \lceil 1/\beta \right \rceil \left \lceil 2LD/t \right \rceil^n \exp\left\lbrace -2NM^2 \right\rbrace.$$

\end{theorem}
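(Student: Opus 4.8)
The plan is to reduce the non-finite case to the finite case of \cref{teo: FiniteFeas} by covering $X$ with a finite grid and using the Lipschitz assumption to control the constraint value between grid points. First I would place a finite $\varepsilon'$-net $G \subseteq X$ over the bounded set $X$: since $X$ has diameter $D$ (in the $\|\cdot\|_\infty$ norm), it can be covered by at most $\lceil 2LD/t \rceil^n$ axis-parallel cubes of side length $t/L$, and we pick one representative point of $X$ in each nonempty cube, so that every $x \in X$ has some $g \in G$ with $\|x-g\|_\infty \le t/L$, hence $|C(x,\xi) - C(g,\xi)| \le t$ w.p.1 by \cref{assu: LipchitzCont}. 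The key monotonicity observation driving the argument is that $\Gamma_\epsilon$ is nonincreasing, so shifting its argument up by $t$ can only decrease it: for such $x$ and $g$,
\begin{align*}
F_\epsilon^N(-2t; x) = \frac1N \sum_{i=1}^N \Gamma_\epsilon(C(x,\xi_i) + 2t) \le \frac1N \sum_{i=1}^N \Gamma_\epsilon(C(g,\xi_i) + t) = F_\epsilon^N(-t; g),
\end{align*}
w.p.1, using $C(x,\xi_i) + 2t \ge C(g,\xi_i) + t$. Consequently, if $x \in X_{\epsilon,(\delta-\beta)}^{N,2t}$, then $F_\epsilon^N(-t;g) \ge F_\epsilon^N(-2t;x) \ge 1-(\delta-\beta) \ge 1-\delta$, so $g \in X_{\epsilon,\delta}^{N,t}$.

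Next I would handle the loss of $\beta$ in the risk level by a second, one-dimensional net — this time over the probability axis — exactly as in the proof of Theorem 10 of \cite{LuedAhme08}. The point is that \cref{teo: FiniteFeas} applied to the finite set $G$ controls membership in $X_{\epsilon,\delta}^{N,t}$ for a single fixed $\delta$, but we need the implication ``$x \notin X_\alpha$'' to transfer back from $g$; since $g \in X_{\epsilon,\delta}^{N,t}$ but we only know $g \in X$ (not necessarily $g \notin X_\alpha$), we instead want to show that the event ``$X_{\epsilon,\delta}^{N,t} \subseteq X_\alpha$ for all relevant $\delta$'' fails with small probability. Discretizing $[0,\delta]$ into $\lceil 1/\beta\rceil$ values $\delta_k = k\beta$ and applying \cref{teo: FiniteFeas} (which needs $M>0$, guaranteed by \cref{assu: PositiveM}, and whose bound is monotone in $\delta$ so the single $M$ works uniformly) to $G$ at each level, a union bound over these $\lceil 1/\beta\rceil$ levels and the implicit $\lceil 2LD/t\rceil^n$ grid points yields that with probability at least $1 - \lceil 1/\beta\rceil \lceil 2LD/t\rceil^n \exp\{-2NM^2\}$, every $g \in G$ that lies in $X_{\epsilon,\delta_k}^{N,t}$ also lies in $X_\alpha$, simultaneously for all $k$. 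On this good event, take any $x \in X_{\epsilon,(\delta-\beta)}^{N,2t}$: the chain above gives a net point $g$ with $g \in X_{\epsilon,\delta}^{N,t} \subseteq X_{\epsilon,\delta_{k}}^{N,t}$ for the appropriate $k$, hence $g \in X_\alpha$, i.e., $F(0;g) \ge 1-\alpha$; then I would use a (deterministic, population-level) Lipschitz-type bound relating $F(0;x)$ and $F(0;g)$ — or more carefully, re-run the Hoeffding argument of \cref{teo: GeneralFeas} on the net so that feasibility of $g$ for the true problem forces feasibility of $x$ — to conclude $x \in X_\alpha$.

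The main obstacle, and the step I would be most careful about, is the last link: closing the gap between ``the net point $g$ is true-feasible'' and ``the original $x$ is true-feasible,'' because $F(0;\cdot)$ is not assumed Lipschitz (only $C(\cdot,\xi)$ is), so one cannot naively transfer feasibility across the net at the population level. The clean fix, following \cite{LuedAhme08}, is to never leave the sampled quantities: show directly that on the good event, for every $x \in X_{\epsilon,(\delta-\beta)}^{N,2t}$ one has $x \in X_\alpha$, by contraposition — if $x \notin X_\alpha$ then $F(0;x) < 1-\alpha$, and since $C(x,\xi_i) + 2t \ge C(g,\xi_i)+t \ge C(g',\xi_i)$ — wait, one must set up the net so that the representative $g$ satisfies $C(g,\xi_i) \le C(x,\xi_i)+t$ AND use that $x$ being infeasible makes $g$ "even more infeasible" in the shifted sense, contradicting $g \in X_{\epsilon,\delta}^{N,t}$ on the good event. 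Getting the direction of all the inequalities consistent (the $2t$ shift must dominate both the Lipschitz slack $t$ and leave room for the $t$-shift in the definition of $X_{\epsilon,\delta}^{N,t}$) is the delicate bookkeeping; everything else is a routine union bound combining \cref{teo: GeneralFeas}/\cref{teo: FiniteFeas} with the covering-number count $\lceil 1/\beta\rceil \lceil 2LD/t\rceil^n$.
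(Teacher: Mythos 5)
There is a genuine gap, and it is exactly the one you flag yourself at the end: the transfer of \emph{true} feasibility between $x$ and its net point $g$. Your sample-level chain is correct and is the same key inequality as in the paper ($C(g,\xi_i)+t \le C(x,\xi_i)+2t$, hence $F_\epsilon^N(-t;g) \ge F_\epsilon^N(-2t;x)$, so $x \in X_{\epsilon,(\delta-\beta)}^{N,2t}$ forces $g$ into a shifted empirical feasible set). But the conclusion you need is about $F(0;x)$, and since only $C(\cdot,\xi)$ is Lipschitz (\cref{assu: LipchitzCont}), knowing $F(0;g) \ge 1-\alpha$ on the good event tells you nothing about $F(0;x)$; conversely, in your contraposition sketch, $F(0;x) < 1-\alpha$ gives no bound on $F(0;g)$, so you cannot invoke \cref{teo: GeneralFeas} at $g$ (that theorem, and \cref{assu: PositiveM}, are tied to the unshifted level $F(0;\cdot)$, and the Lipschitz slack only yields the shifted comparison $F(-t;g) \le F(0;x)$, which is not what \cref{teo: FiniteFeas} controls). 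Your proposed second net over the risk levels $\delta_k = k\beta$ does not repair this: varying $\delta$ changes the empirical threshold, not the population-level link between $x$ and $g$, so the union bound over $\lceil 1/\beta\rceil$ values of $\delta$ produces the right-looking count for the wrong reason and the final step remains unproven.

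The missing idea in the paper's proof is to stratify the \emph{domain}, not the risk level: partition $X$ into $J=\lceil 1/\beta\rceil$ slices $X_j$ according to the value of $F(0;\cdot)$ (bands of width $1/J \le \beta$) and build a separate $t/L$-covering $Z_j \subseteq X_j$ of size at most $\lceil 2LD/t\rceil^n$ \emph{inside each slice}. Then the net point $z$ assigned to $x$ automatically satisfies $|F(0;x)-F(0;z)| \le \beta$, deterministically, which is precisely the population-level transfer you could not get. The $\beta$-tightening is then applied on both sides for net points: one defines $Z_{\delta-\beta}^{N,t}=\{z\in Z \mid F_\epsilon^N(-t;z)\ge 1-\delta+\beta\}$ and $Z_{\alpha-\beta}=\{z\in Z \mid F(0;z)\ge 1-\alpha+\beta\}$, applies the finite-set result (the same $M$ works because $M_x$ depends on $\alpha-\delta$, which is unchanged), and concludes: $x \in X_{\epsilon,(\delta-\beta)}^{N,2t}$ pushes $z$ into $Z_{\delta-\beta}^{N,t}$, the good event puts $z$ in $Z_{\alpha-\beta}$, and the slice bound gives $F(0;x)\ge F(0;z)-\beta \ge 1-\alpha$. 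The factor $\lceil 1/\beta\rceil$ in the probability bound is the number of slices, i.e., it comes from covering each $X_j$ separately, not from a union over discretized $\delta$'s.
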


\begin{proof}

Let $J = \lceil 1/\beta \rceil$. For $j = 1,\ldots,J-1$, define
$$X_j = \left\lbrace x \in X \mid \frac{j-1}{J} \leq F(0;x) < \frac{j}{J} \right\rbrace,$$
and let $X_J = \lbrace x \in X \mid (J-1)/J \leq F(0;x) \leq 1 \rbrace$. Following the arguments used to prove Theorem 10 in \cite{LuedAhme08}, we have that for each $j$ there exists a finite set $Z_j \subseteq X_j$ such that $|Z_j| \leq \lceil 2LD/t \rceil^n$ and that for all $x \in X_j$ there exists $z \in Z_{j}$ such that $\|x - z\|_\infty \leq t/L$.

Now define $Z = \cup_{j=1}^J Z_j$, and notice that $|Z| \leq \lceil 1/\beta \rceil \lceil 2LD/t \rceil^n$. Next, define $Z_{\alpha - \beta} = \{ x \in Z \mid F(0;x) \geq 1 - \alpha + \beta\}$ and
$$Z_{\delta-\beta}^{N,t} = \left\lbrace x \in Z \mid F_\epsilon^{N}(-t;x) \geq 1 - \delta + \beta \right\rbrace.$$
Since $Z$ is finite, we can apply \cref{teo: FiniteFeas} to obtain
\begin{align} \label{eq: }
    \proba \left( Z_{\delta - \beta}^{N,t} \subseteq Z_{\alpha-\beta} \right) \geq 1 - \left \lceil 1/\beta \right \rceil \left \lceil 2LD/t \right \rceil^n \exp\left\lbrace -2NM^2 \right\rbrace.
\end{align}

Consider now any $x \in X_{\epsilon,(\delta-\beta)}^{N,2t}$. Let $j \in \{1,\ldots,J\}$ be such that $x \in X_j$. By definition of $Z_j$, there exists $z \in Z_j$ such that $\|x - z\|_\infty \leq t/L$. By \cref{assu: LipchitzCont}, we know that $|C(x,\xi_i) - C(z,\xi_i)| \leq t$ w.p.1. Therefore, $C(z,\xi_i) + t \leq C(x,\xi_i) + 2t$, which implies that $\Gamma_{\epsilon}(C(z,\xi_i) + t) \geq \Gamma_{\epsilon}(C(x,\xi_i)+ 2t)$ w.p.1, and thus $F_\epsilon^{N}(-t;z) \geq F_\epsilon^{N}(-2t;x) \geq 1 - \delta + \beta$ w.p.1. We conclude that if $x \in X_{\epsilon,(\delta-\beta)}^{N,2t}$, then there exists $z \in Z_{\delta - \beta}^{N,t}$ w.p.1. Furthermore, by the definition of $X_j$ and because $Z_j \subseteq X_j$, we have $|F(0;x) - F(0;z)| \leq \beta$. If $Z_{\delta-\beta}^{N,t} \subseteq Z_{\alpha-\beta}$ then $F(0;z) \geq 1 - \alpha + \beta$, which implies that $F(0;x) \geq F(0;z) - \beta \geq 1 - \alpha$, and the result follows.
\end{proof}

\section{Single Chance-Constrained Problems} \label{sec: SCCP}

A single chance constrained problem is one in which $m = 1$, which means that $C(x,\xi) = \max_{j=1,\ldots,m}\{ c_j(x,\xi) \} = c(x,\xi)$. Defining $C^N(x) = c^N(x) = [c(x,\xi_1),\ldots,c(x,\xi_N)]^T$, problem \cref{eq: VaRFormulation} then becomes
\begin{subequations} \label{eq: SCCP}
\begin{align}
\min_{x \in X} &\quad f(x) \\
\text{s.t.} &\quad q(x) = Q_\epsilon(c^N(x)) \leq 0. \label{eq: VaRConstraint}
\end{align}
\end{subequations}

The advantage of the single-chance constrained case is that the approximated probabilistic constraint can be formulated in such a way that it results in a single smooth constraint (assuming $c$ is smooth in $x$) that can be handled by standard nonlinear optimization solvers and can be added to any nonlinear optimization problem. Furthermore, for a given $x$ and $\epsilon > 0$, we can use the implicit function theorem to obtain the gradient of $q$ (and provide it to the solver) as follows
\begin{align} \label{eq: ImplicitGradient}
\nabla q(x) &= \frac{ \sum_{i = 1}^N \Gamma_\epsilon'(c(x,\xi_i) - Q_\epsilon(c^N(x)))\nabla c(x,\xi_i)}{\sum_{j = 1}^N \Gamma_\epsilon'(c(x,\xi_j) - Q_\epsilon(c^N(x)))},
\end{align}
which is well defined as shown in \cref{prop: UniqueSolution}.

In fact, the following proposition shows that if we define $\gamma_\epsilon$ as \cref{eq: gamma_eps}, then function $Q_\epsilon(\cdot)$ has Lipschitz continuous gradients. Using the chain rule, this property can be extended to $q(x)$ as long as $c(x,\xi)$ has Lipschitz continuous gradients for all instances $\xi_i$.

\begin{proposition} \label{prop: LipschitzQ}
If $\gamma_\epsilon$ is defined as \cref{eq: gamma_eps} and $(1-\alpha)N \notin \mathbb{Z}$, then the function $Q_\epsilon: \mathbb{R}^N \rightarrow \mathbb{R}$ has bounded Lipschitz continuous gradients.
\end{proposition}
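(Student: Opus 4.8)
The plan is to differentiate the defining equation \cref{eq: SmoothVaR} twice by means of the implicit function theorem and then bound the resulting Hessian of $Q_\epsilon$ uniformly over $\mathbb{R}^N$; the crux will be a $v$-independent positive lower bound on the denominator appearing in \cref{eq: ImplicitGradient}. First I would record the first-order information. Writing $\Phi(v,Q) = \sum_{i=1}^N \Gamma_\epsilon(v_i - Q) - (1-\alpha)N$, the hypothesis $(1-\alpha)N \notin \mathbb{Z}$ together with the argument in the proof of \cref{prop: UniqueSolution} gives $\partial_Q \Phi(v, Q_\epsilon(v)) = -\sum_{i=1}^N \Gamma_\epsilon'(v_i - Q_\epsilon(v)) =: -S(v) > 0$; since the quartic $\gamma_\epsilon$ of \cref{eq: gamma_eps} makes $\Gamma_\epsilon$ twice continuously differentiable, the implicit function theorem yields $Q_\epsilon \in C^2(\mathbb{R}^N)$ with gradient given by \cref{eq: ImplicitGradient}. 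Because $\Gamma_\epsilon' \le 0$ everywhere, the components of $\nabla Q_\epsilon(v)$ are nonnegative and sum to one, so $\|\nabla Q_\epsilon(v)\|_1 = 1$ for every $v$; this already proves the boundedness of the gradient. It then suffices to bound $\nabla^2 Q_\epsilon$ uniformly on $\mathbb{R}^N$, since a $C^1$ map with globally bounded derivative is globally Lipschitz.

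The hard part will be showing that $|S(v)|$ is bounded below by a positive constant independent of $v$. A priori the \emph{active} scenarios, those $i$ with $w_i := v_i - Q_\epsilon(v) \in (-\epsilon,\epsilon)$, could all sit arbitrarily close to $\pm\epsilon$, where $\gamma_\epsilon'(y) = -\tfrac{15}{16\epsilon}\big(1-(y/\epsilon)^2\big)^2$ vanishes, in which case $S(v)$ would not stay away from zero. To rule this out I would split $\{1,\dots,N\}$ into $I_-$ ($w_i \le -\epsilon$), $I_+$ ($w_i \ge \epsilon$), and the active set $I_0$, so that $\sum_{i\in I_0} \gamma_\epsilon(w_i) = (1-\alpha)N - |I_-| =: r$, a number whose distance to $\mathbb{Z}$ equals $\eta := \mathrm{dist}\big((1-\alpha)N,\mathbb{Z}\big) > 0$; in particular $I_0 \neq \emptyset$. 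A short pigeonhole argument applied to the $|I_0| \le N$ values $\gamma_\epsilon(w_i) \in (0,1)$ whose sum is $r$ then shows that at least one of them, say at index $i^*$, satisfies $\min\{\gamma_\epsilon(w_{i^*}),\, 1-\gamma_\epsilon(w_{i^*})\} \ge \eta/N$: otherwise every such value would lie within $\eta/|I_0|$ of $\{0,1\}$, forcing $r$ to lie within $\eta$ of an integer. Hence $w_{i^*}$ belongs to the compact interval $\gamma_\epsilon^{-1}\big([\eta/N,\,1-\eta/N]\big) \subset (-\epsilon,\epsilon)$, which depends only on $\alpha$, $N$ and $\epsilon$; on it the continuous function $|\gamma_\epsilon'|$ is strictly positive, hence bounded below by some $\mu > 0$, and therefore $|S(v)| = \sum_{j\in I_0} |\gamma_\epsilon'(w_j)| \ge |\gamma_\epsilon'(w_{i^*})| \ge \mu$ for all $v \in \mathbb{R}^N$.

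Finally I would differentiate \cref{eq: ImplicitGradient} once more, using $\nabla_v w_j = e_j - \nabla Q_\epsilon(v)$, to express every entry of $\nabla^2 Q_\epsilon(v)$ as a rational function whose numerator involves only $\Gamma_\epsilon'$, $\Gamma_\epsilon''$ and the components of $\nabla Q_\epsilon(v)$ and whose denominator is $S(v)^2$. Estimating this with $\|\Gamma_\epsilon'\|_\infty = \tfrac{15}{16\epsilon}$, $\|\Gamma_\epsilon''\|_\infty < \infty$ (finite because $\Gamma_\epsilon \in C^2$ with support in $[-\epsilon,\epsilon]$), $\|\nabla Q_\epsilon(v)\|_1 = 1$, $|I_0| \le N$, and the bound $|S(v)| \ge \mu$ from the previous step yields a bound on $\nabla^2 Q_\epsilon(v)$ that is independent of $v$, which is precisely the Lipschitz continuity of $\nabla Q_\epsilon$. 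The only genuinely delicate point is the uniform positivity of $S(v)$; the remaining ingredients are the implicit function theorem and routine estimates.
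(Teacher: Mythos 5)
Your proof is correct and follows the same overall skeleton as the paper's: the implicit-function-theorem formula for $\nabla Q_\epsilon$, the observation that its components are nonnegative and sum to one (the paper only notes $0\le[\nabla Q_\epsilon(z)]_i\le 1$), a uniform lower bound on $\bigl|\sum_j\Gamma_\epsilon'(z_j-Q_\epsilon(z))\bigr|$, and then Lipschitz continuity of the gradient from the boundedness of $\Gamma_\epsilon''$. The genuine difference lies in the key step. The paper argues by contradiction that some scenario must be strictly active and then asserts the existence of a single $\epsilon_0<\epsilon$ with $|z_i-Q_\epsilon(z)|\le\epsilon_0$ valid for all $z\in\mathbb{R}^N$; the uniformity in $z$ --- which is exactly the delicate point you flag, since the active values could a priori drift toward $\pm\epsilon$ where $\gamma_\epsilon'$ vanishes --- is not spelled out there. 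Your pigeonhole argument using $\eta=\mathrm{dist}\bigl((1-\alpha)N,\mathbb{Z}\bigr)>0$ supplies precisely this missing uniformity: at least one active value of $\gamma_\epsilon$ stays in $[\eta/N,\,1-\eta/N]$, so the corresponding $w_{i^*}$ lies in a fixed compact subinterval of $(-\epsilon,\epsilon)$ on which $|\gamma_\epsilon'|\ge\mu>0$, giving a $z$-independent bound on the denominator (and in principle an explicit constant). Your final step, bounding $\nabla^2Q_\epsilon$ entrywise and invoking the fact that a $C^1$ map with bounded derivative is globally Lipschitz, is a more explicit version of the paper's appeal to the Lipschitz continuity of $\Gamma_\epsilon'$ combined with the denominator bound; both are sound. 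One cosmetic slip: in the pigeonhole step the threshold should be $\eta/N$ throughout rather than $\eta/|I_0|$, but since $|I_0|\le N$ the deviations still sum to less than $\eta$ and the contradiction stands.
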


\begin{proof}
Let $z \in \mathbb{R}^N$. It follows from \cref{prop: UniqueSolution} and the implicit function theorem that
$$[\nabla Q_\epsilon(z)]_i = \frac{\Gamma_\epsilon' (z_i - Q_\epsilon(z))}{\sum_{j = 1}^N \Gamma_\epsilon'(z_j - Q_\epsilon(z))}.$$

We begin by showing that $\sum_{j = 1}^N \Gamma_\epsilon'(z_j - Q_\epsilon(z))$ is bounded away from zero. We prove this claim by contradiction. Assume that there exists $z \in \mathbb{R}^N$ such that $|z_j - Q_\epsilon(z)| \geq \epsilon$ for all $j \in \{1,\ldots,N\}$. Then, from \cref{eq: SmoothIndicator}, $\Gamma_\epsilon (z_j - Q_\epsilon(z)) \in \{0,1\}$ for all $j$. This implies that $\sum_{j = 1}^N \Gamma_\epsilon(z_j - Q_\epsilon(z)) \in \mathbb{N}$, which means that $\sum_{j = 1}^N \Gamma_\epsilon(z_j - Q_\epsilon(z)) \neq (1-\alpha)N$. Therefore, there must exists $i \in \{1,\ldots,N\}$ and $0 < \epsilon_0 < \epsilon$ such that $|z_i - Q_\epsilon(z)| \leq \epsilon_0$ and $\sum_{j = 1}^N \Gamma_\epsilon'(z_j - Q_\epsilon(z)) \leq \Gamma_\epsilon'(z_i - Q_\epsilon(z)) \leq \gamma_\epsilon'(\epsilon_0) < 0$, for all $z \in \mathbb{R}^N$.

More over, since $\sum_{j = 1}^N \Gamma_\epsilon'(z_j - Q_\epsilon(z)) \leq \Gamma'_\epsilon(z_i - Q_\epsilon(z)) \leq 0$ for all $i$, we conclude that $0 \leq [\nabla Q_\epsilon(z)]_i \leq 1$ for all $i$; this means that $\nabla Q_\epsilon(z)$ is bounded. Finally, because $\Gamma_\epsilon''$ exists and is bounded, then $\Gamma_\epsilon'$ is Lipschitz continuous. Given that the denominator is bounded above by $\gamma_\epsilon'(\epsilon_0)<0$, we can conclude that $Q_\epsilon(z)$ has Lipschitz continuous gradients.
\end{proof}

If we also assume that $f$ and $c$ are twice continuously differentiable, then we can provide the Hessian of constraint \cref{eq: VaRConstraint} to the solver as well. The Hessian is then computed as
\begin{align} \label{eq: SCCHessian}
\nabla^2 q(x) &= \sum_{i = 1}^N \left[\nabla Q_\epsilon \left(c^N(x)\right)\right]_i \left[ \nabla^2 c(x,\xi_i) \right] + \nabla c^N(x) \left[ \nabla^2 Q_\epsilon \left(c^N(x)\right) \right] \left[ \nabla c^N(x) \right]^T,
\end{align}
where $\left[\nabla Q_\epsilon (\cdot)\right]_i$ is the $i$th entry of the gradient of $Q_\epsilon(\cdot)$, and $\nabla^2 Q_\epsilon \left( \cdot \right)$ is the Hessian of the smooth approximation of the quantile with respect to each scenario, i.e., $\nabla^2 Q_\epsilon \left( \cdot \right) \in \mathbb{R}^{N \times N}$ is such that
\begin{align*}
    \left[\nabla^2 Q_\epsilon (z)\right]_{ii} &= \frac{w_i^2W' - 2w_iw_i'W + w_i'W^2}{W^3},\\
    \left[\nabla^2 Q_\epsilon (z)\right]_{ij} &= \frac{w_iw_jW' - W(w_iw_j' + w_i'w_j)}{W^3}, \quad i \neq j,
\end{align*}
where $w_\ell = \Gamma_\epsilon'(z_\ell - Q(z))$, $w_\ell' = \Gamma_\epsilon''(z_\ell - Q(z))$, $W = \sum_{\ell = 1}^N w_\ell$, and $W' = \sum_{\ell = 1}^N w_\ell'$.

In \cref{sec: NumResSCCP} we discuss the performance of this approximation. We give two examples in which the nonlinear constraint $q(x)$ and its derivatives are given directly to \texttt{Knitro}.

\section{Joint Chance-Constrained Problems} \label{sec: JCCP}

In this section, we consider problems where $c(x,\xi)$ is a vector-valued function from $\mathbb{R}^n \times \mathbb{R}^s$ to $\mathbb{R}^m$ for $m > 1$, and therefore $C(x,\xi) = \max_{j=1,\ldots,m}\{ c_j(x,\xi) \}$ is not smooth even when $c_j(x,\xi)$ are smooth for $j = 1,\ldots,m$.

Assuming that there exists a deterministic differentiable function, $g: \mathbb{R}^n \rightarrow \mathbb{R}^p$, such that the deterministic feasible set $X$ can be written as $X = \left\lbrace x \in \mathbb{R}^n \mid g(x) \leq 0 \right\rbrace$, we can rewrite the approximation of \cref{eq: VaRFormulation} as
\begin{subequations} \label{eq: SingleJCCP}
\begin{align}
\min_{x \in \mathbb{R}^n} &\quad f(x) \\
\text{s.t.} &\quad g(x) \leq 0, \\
 &\quad Q_\epsilon(C^N(x)) \leq 0. \label{eq: SingleVaRJCCP}
\end{align}
\end{subequations}
The discussion in this section can be extended to solve problems with deterministic equality constraints in the definition of the set $X$, but we consider only inequality constraints for a succinct exposition.

Notice that, because $C(x,\xi)$ is no longer smooth, the constraint \eqref{eq: SingleVaRJCCP} is not necessarily differentiable. To obtain a smooth reformulation of the quantile constraint \cref{eq: SingleVaRJCCP}, we introduce auxiliary variables $z_i$ that correspond to $\max_j\{c_j(x,\xi_i)\}$ and consider
\begin{subequations} \label{eq: JCCP}
\begin{align}
\min_{x \in \mathbb{R}^n, z \in \mathbb{R}^N} &\quad f(x) \\
\text{s.t.} &\quad g(x) \leq 0, \label{eq: Det_g}\\
 &\quad c(x,\xi_i) \leq z_i \pmb{1}_m, \quad\forall i = \{1,\ldots,N\} \label{eq: MaxConstraints}\\
 &\quad Q_\epsilon(z) \leq 0, \label{eq: VaRJCCPReformulation}
\end{align}
\end{subequations}
where $\pmb{1}_m$ is a vector of ones of size $m$.

\begin{proposition}
Let $x^*$ be a local minimum of \cref{eq: SingleJCCP}, then $(x^*,C^N(x^*))$ is a local minimum of \cref{eq: JCCP}.
\end{proposition}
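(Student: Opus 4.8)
The plan is to reduce local optimality of $(x^*,C^N(x^*))$ for \cref{eq: JCCP} to local optimality of $x^*$ for \cref{eq: SingleJCCP}, using two ingredients: the epigraph constraints \cref{eq: MaxConstraints} force $z_i \ge [C^N(x)]_i = \max_{j} c_j(x,\xi_i)$, and the map $Q_\epsilon\colon\mathbb{R}^N\to\mathbb{R}$ (which is well defined by \cref{prop: UniqueSolution}, since $(1-\alpha)N\notin\mathbb{Z}$) is non-decreasing in each argument.

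First I would record the monotonicity of $Q_\epsilon$. Fix $z,z'\in\mathbb{R}^N$ with $z\ge z'$ componentwise, and for $\zeta\in\mathbb{R}^N$ set $\psi_\zeta(Q)=\sum_{i=1}^N\Gamma_\epsilon(\zeta_i-Q)-(1-\alpha)N$. As in the proof of \cref{prop: UniqueSolution}, $\psi_\zeta$ is continuous and non-decreasing, with $\psi_\zeta(Q)\to-(1-\alpha)N<0$ as $Q\to-\infty$, $\psi_\zeta(Q)\to\alpha N>0$ as $Q\to+\infty$, and unique root $Q_\epsilon(\zeta)$. Since $\Gamma_\epsilon$ is non-increasing (see \cref{eq: SmoothIndicator}), $z\ge z'$ gives $\psi_z(Q)\le\psi_{z'}(Q)$ for every $Q$; evaluating at $Q=Q_\epsilon(z')$ yields $\psi_z(Q_\epsilon(z'))\le 0$, and since $\psi_z$ is non-decreasing with a unique root this forces $Q_\epsilon(z')\le Q_\epsilon(z)$ (otherwise $Q_\epsilon(z')$ would be a second root of $\psi_z$). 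Hence $Q_\epsilon$ is non-decreasing coordinatewise.

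Feasibility of $(x^*,C^N(x^*))$ for \cref{eq: JCCP} is then immediate: $g(x^*)\le 0$ and $Q_\epsilon(C^N(x^*))\le 0$ because $x^*$ is feasible for \cref{eq: SingleJCCP}, and \cref{eq: MaxConstraints} holds because $[C^N(x^*)]_i=\max_j c_j(x^*,\xi_i)\ge c_j(x^*,\xi_i)$ for all $j$. For local optimality, let $\rho>0$ be such that $f(x)\ge f(x^*)$ for every $x$ feasible for \cref{eq: SingleJCCP} with $\|x-x^*\|\le\rho$. Take any $(x,z)$ feasible for \cref{eq: JCCP} with $\|(x,z)-(x^*,C^N(x^*))\|\le\rho$; then $\|x-x^*\|\le\rho$. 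I claim $x$ is feasible for \cref{eq: SingleJCCP}: $g(x)\le 0$ by \cref{eq: Det_g}, and from \cref{eq: MaxConstraints} we get $z_i\ge\max_j c_j(x,\xi_i)=[C^N(x)]_i$ for all $i$, i.e.\ $z\ge C^N(x)$, so by the monotonicity just established $Q_\epsilon(C^N(x))\le Q_\epsilon(z)\le 0$. Thus $f(x)\ge f(x^*)$, proving that $(x^*,C^N(x^*))$ is a local minimum of \cref{eq: JCCP}.

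The only step needing genuine care is the monotonicity of $Q_\epsilon$, and even that is elementary once one uses the ``sign-crossing'' description of the unique root from \cref{prop: UniqueSolution}; everything else is routine bookkeeping with the neighborhood radius (using $\|x-x^*\|\le\|(x,z)-(x^*,C^N(x^*))\|$). So I expect no serious obstacle: the statement is essentially the combination of the epigraph reformulation of the $\max$ with the monotonicity of the smoothed quantile.
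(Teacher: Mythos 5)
Your proposal is correct and takes essentially the same route as the paper: establish feasibility of $(x^*,C^N(x^*))$ and use the coordinatewise monotonicity of $Q_\epsilon$ together with the epigraph constraints \cref{eq: MaxConstraints} to conclude $Q_\epsilon(C^N(x))\le Q_\epsilon(z)\le 0$ for any nearby feasible $(x,z)$, hence $f(x)\ge f(x^*)$. The only (minor) difference is that the paper argues by contradiction and gets monotonicity from $0\le[\nabla Q_\epsilon]_i$ via \cref{prop: LipschitzQ}, whereas you derive it directly from the unique-root characterization in \cref{prop: UniqueSolution}.
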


\begin{proof}
Assume that $(x^*,C^N(x^*))$ is not a local minimum of \cref{eq: JCCP}. This means that there exists $(\hat{x},\hat{z})$ in a neighborhood of $(x^*,C^N(x^*))$ such that $f(\hat{x}) < f(x^*)$, $g(\hat{x}) \leq 0$, $c_j(\hat{x},\xi_i) \leq \hat{z}_i$ for all $i$ and $j$, and $Q_\epsilon(\hat{z}) \leq 0$; thus, $[C(\hat{x},\xi)]_i \leq \hat{z}_i$ for all $i$. Now, the function $Q_\epsilon$ is an increasing function with respect to $\hat{z}_i$ because $0 \leq [\nabla Q_\epsilon(\hat{z})]_i$, as shown in \cref{prop: LipschitzQ}. This means that $Q(C^N(\hat{x},\xi)) \leq Q(\hat{z}) \leq 0$. Putting everything together, we see that $\hat{x}$ is feasible for (5.1) and $f(\hat{x}) < f(x^*)$.  This argument holds for $(\hat{x},\hat{z})$  from an arbitrarily small neighborhood of $(x^*,C^N(x^*))$.  This is a contradiction because $x^*$ is assumed to be a local minimum of (5.1).
\end{proof}

We have shown that a local minimum of \cref{eq: SingleJCCP} is also a local minimum of \cref{eq: JCCP} if we define $z_i^* = [C^N(x^*)]_i$. However, a local minimum of \cref{eq: JCCP} is not necessarily a local minimum of \cref{eq: SingleJCCP} if $z_i^* \neq [C^N(x^*)]_i$. Thus, the downside of formulating the JCCP as the SCCP in \cref{eq: JCCP} is that a standard NLP solver might get stuck at a spurious local minimum. We therefore introduce a tailored algorithm that is designed to solve JCCPs.

\subsection{Exact-penalty formulation}

In order to solve problem \cref{eq: SingleJCCP}, we write an equivalent unconstrained optimization problem in which the constraints are handled by an $\ell_1$-penalty term. Our approach is motivated by the S$\ell_1$QP method for regular NLPs \cite{NoceWrig06}. We define the penalty function
\begin{align}
\phi_\pi(x) = f(x) + \pi\left( \left\| \left[ g(x) \right]^+ \right\|_1 + \left[ Q_\epsilon(C^N(x)) \right]^+ \right), \label{eq: Penalization}
\end{align}
where $\pi > 0$ is a penalty parameter and $[x]^+_i = \max\{0,x_i\}$. If $x^*$ is a minimizer of $\phi_\pi$ for $\pi > 0$ and $\| [ g(x) ]^+ \|_1 + [ Q_\epsilon(C^N(x)) ]^+ = 0$, then $x^*$ solves \cref{eq: SingleJCCP} \cite[p. 299]{HiriLema93}. The following theorem, stated in \cite[Theorem 2.1]{CurtOver12}, considers the converse situation and shows that there exists $\pi > 0$ such that we can use the minimization of \eqref{eq: Penalization} to obtain a solution for \cref{eq: SingleJCCP}.




\begin{theorem} \label{teo: ExactPenalty}
Suppose $f$, $g$, $c$, and $Q_\epsilon$ are locally Lipschitz on $\mathbb{R}^n$ and let $x^*$ be a local minimizer of \cref{eq: SingleJCCP}. Moreover, suppose that the Mangasarian-Fromowitz condition holds for problem \cref{eq: JCCP} at $x^*$ and $z^*=C^N(x^*)$. Then, there exists $\pi^* > 0$ and $\delta > 0$ such that for all $x \in \mathbb{B}_{\delta}(x^*)$,
$$\phi_\pi(x) \geq \phi_\pi(x^*) \quad\text{for all} \quad\pi \geq \pi^*.$$
\end{theorem}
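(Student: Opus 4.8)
The plan is to reduce \cref{teo: ExactPenalty} to the cited exact-penalty result for nondifferentiable programs (\cite[Theorem 2.1]{CurtOver12}) applied to the reformulation \cref{eq: JCCP}, and then transfer the conclusion back to the penalty function $\phi_\pi$ built from \cref{eq: SingleJCCP}. First I would recall the proposition just proved: if $x^*$ is a local minimizer of \cref{eq: SingleJCCP}, then $(x^*, z^*)$ with $z^* = C^N(x^*)$ is a local minimizer of \cref{eq: JCCP}. Since $f$, $g$, $c$, $Q_\epsilon$ are locally Lipschitz (for $Q_\epsilon$ this is \cref{prop: LipschitzQ}, and for the composite $C^N(x)$ it follows since each $c_j$ is locally Lipschitz and the max of finitely many locally Lipschitz functions is locally Lipschitz), all data of \cref{eq: JCCP} are locally Lipschitz, so the hypotheses of \cite[Theorem 2.1]{CurtOver12} are met once we assume the Mangasarian–Fromovitz constraint qualification (MFCQ) at $(x^*,z^*)$. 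That theorem then yields a threshold $\pi^* > 0$ and a radius $\bar\delta > 0$ such that the $\ell_1$-penalty function associated with \cref{eq: JCCP},
\begin{align*}
\Psi_\pi(x,z) = f(x) + \pi\left( \left\| [g(x)]^+ \right\|_1 + \sum_{i=1}^N \left\| [c(x,\xi_i) - z_i \pmb{1}_m]^+ \right\|_1 + [Q_\epsilon(z)]^+ \right),
\end{align*}
has a local minimum at $(x^*,z^*)$ for all $\pi \ge \pi^*$, i.e.\ $\Psi_\pi(x,z) \ge \Psi_\pi(x^*,z^*)$ for all $(x,z) \in \mathbb{B}_{\bar\delta}(x^*,z^*)$.

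Next I would connect $\Psi_\pi$ to $\phi_\pi$. Observe that $\Psi_\pi(x^*,z^*) = \phi_\pi(x^*)$ since the coupling constraints \cref{eq: MaxConstraints} hold with equality at $z^* = C^N(x^*)$ in the sense $c_j(x^*,\xi_i) \le z_i^*$ for all $i,j$, so their penalty terms vanish, and $Q_\epsilon(z^*) = Q_\epsilon(C^N(x^*))$. For the inequality direction, fix $x \in \mathbb{B}_\delta(x^*)$ for a suitably small $\delta \le \bar\delta$ (shrunk using local Lipschitz continuity of $C^N$ so that $(x, C^N(x)) \in \mathbb{B}_{\bar\delta}(x^*,z^*)$), and evaluate $\Psi_\pi$ at the particular point $(x, C^N(x))$: there the constraints \cref{eq: MaxConstraints} are again satisfied, so $\Psi_\pi(x, C^N(x)) = f(x) + \pi( \|[g(x)]^+\|_1 + [Q_\epsilon(C^N(x))]^+ ) = \phi_\pi(x)$. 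Combining, $\phi_\pi(x) = \Psi_\pi(x, C^N(x)) \ge \Psi_\pi(x^*,z^*) = \phi_\pi(x^*)$ for all $\pi \ge \pi^*$, which is exactly the claimed inequality.

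The main obstacle — and the place where some care is needed — is the bookkeeping around neighborhoods and constraint qualifications. One must verify that MFCQ for \cref{eq: JCCP} at $(x^*, z^*)$ is genuinely what \cite{CurtOver12} requires (their theorem is stated for a general nonsmooth program, so I would phrase MFCQ in terms of the Clarke subdifferentials of the active constraints, and note that the coupling constraints $c_j(x,\xi_i) - z_i \le 0$ supply a free direction in $z$ that makes MFCQ easy to arrange when it holds for the $g$ and $Q_\epsilon$ constraints). I also have to confirm that plugging the feasible "lift" $z = C^N(x)$ into $\Psi_\pi$ lands inside the neighborhood where the cited theorem guarantees optimality; this is where local Lipschitz continuity of $x \mapsto C^N(x)$ is used to pick $\delta$ so that $\|(x,C^N(x)) - (x^*,z^*)\| \le \bar\delta$. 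None of these steps involves hard computation — the argument is essentially that the reformulation \cref{eq: JCCP} and the original problem \cref{eq: SingleJCCP} have matching exact-penalty behavior along the graph $z = C^N(x)$ — but the neighborhood/CQ details are exactly what must be written out carefully rather than asserted.
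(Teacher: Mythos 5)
Your argument is correct and matches the paper's intent: the paper gives no proof of \cref{teo: ExactPenalty}, simply citing \cite[Theorem 2.1]{CurtOver12}, and its hypotheses (locally Lipschitz data, MFCQ for \cref{eq: JCCP} at $(x^*,z^*=C^N(x^*))$) point to exactly the route you take---apply the exact-penalty result to the lifted problem \cref{eq: JCCP}, whose local minimality at $(x^*,z^*)$ comes from the preceding proposition, and transfer back along the graph $z=C^N(x)$. Your explicit checks that $\Psi_\pi(x,C^N(x))=\phi_\pi(x)$ (the coupling terms vanish since $c_j(x,\xi_i)\leq [C^N(x)]_i$), that $\Psi_\pi(x^*,z^*)=\phi_\pi(x^*)$, and the neighborhood bookkeeping via local Lipschitz continuity of $C^N$ supply precisely the glue the paper leaves implicit.
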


To minimize function \cref{eq: Penalization}, we propose a trust-region method that generates steps from the minimization of a piecewise quadratic model of $\phi_\pi$ in every iteration. By making sure that the model approximates $\phi_\pi$ to first-order, we can invoke existing convergence results that show that the trust-region method converges to a stationary point of $\phi_\pi$. A piecewise quadratic model with such characteristics is defined in the following proposition.

\begin{proposition} \label{prop: FirstOrderModel}

Let $f$ and $g$ be differentiable with Lipschitz continuous gradients. Also, given a sample $\{\xi_1,\ldots,\xi_N\}$, let $c(x,\xi_i)$ be differentiable with respect to $x$ for all $i \in \{1,\ldots,N\}$ and assume there exists $L > 0$ and $L' > 0$ such that
\begin{align}
    \left\lvert c(x,\xi_i) - c(y,\xi_i) \right\rvert &\leq L \| x - y \|, \quad \forall x,y \in X, \enskip \forall i \in \{1,\ldots,N\}, \label{eq: LipschitzContinuity} \\
    \left\lvert \nabla c(x,\xi_i) - \nabla c(y,\xi_i) \right\rvert &\leq L' \| x - y \|, \quad \forall x,y \in X, \enskip \forall i \in \{1,\ldots,N\}. \label{eq: GradLipsCont_c}
\end{align}

Given a point $x \in \mathbb{R}^n$ and a direction $d \in \mathbb{R}^n$, consider the function
\begin{align}\label{eq: linear model}
m(x,H;d) &= f(x) + \nabla f(x)^T d + \frac{1}{2}d^T H d \nonumber \\
  &\qquad + \pi \left( \left\| \left[ g(x) + \nabla g(x)^Td \right]^+ \right\|_1 + \left[ \widetilde{Q}_{\epsilon,C}(x;d) \right]^+ \right),
\end{align}
where $H \in \mathbb{R}^{n\times n}$ is a symmetric matrix, and
\begin{align*}
\widetilde{Q}_{\epsilon,C}(x;d) &= \widetilde{Q}_\epsilon(C^N(x);\widetilde{C}^N(x;d)-C^N(x)), \\
\widetilde{Q}_\epsilon(z;p) &= Q_\epsilon(z) + \nabla Q_\epsilon(z)^T p, \\
[\widetilde{C}^N]_i(x;d) &= \max_j\{c_j(x,\xi_i) + \nabla c_j(x,\xi_i)^T d\}.
\end{align*}
Then, $m(x,H;d)$ is a first-order model of $\phi_\pi(x+d)$ in the sense that
$$\left\lvert \phi_\pi(x + d) - m(x,H;d)  \right\rvert = \mathcal{O}(\| d \|^2),$$
for all $d, x \in \mathbb{R}^n$ and all $H$ with $\|H\|\leq L_H$, for fixed $L_H$.

\end{proposition}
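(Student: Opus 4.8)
The plan is to estimate the error $\phi_\pi(x+d)-m(x,H;d)$ by treating its three ingredients separately: the smooth objective, the deterministic penalty term $\pi\|[g(\cdot)]^+\|_1$, and the quantile penalty term $\pi[Q_\epsilon(C^N(\cdot))]^+$. Throughout I would use two elementary facts: $t\mapsto [t]^+$ is $1$-Lipschitz, and $(a_j)\mapsto \max_j a_j$ is $1$-Lipschitz with respect to the sup-norm, so that $\bigl|\max_j a_j-\max_j b_j\bigr|\le\max_j|a_j-b_j|$. The objective part is routine: $\bigl|f(x+d)-f(x)-\nabla f(x)^Td-\tfrac12 d^THd\bigr|\le\bigl|f(x+d)-f(x)-\nabla f(x)^Td\bigr|+\tfrac12|d^THd|$, where the first term is $\mathcal{O}(\|d\|^2)$ by Lipschitz continuity of $\nabla f$ and the second is at most $\tfrac12 L_H\|d\|^2$ since $\|H\|\le L_H$. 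For the $g$-part, since $\|\cdot\|_1$ is a norm and $[\cdot]^+$ is $1$-Lipschitz coordinatewise, $\bigl|\,\|[g(x+d)]^+\|_1-\|[g(x)+\nabla g(x)^Td]^+\|_1\,\bigr|\le\|g(x+d)-g(x)-\nabla g(x)^Td\|_1$, and each coordinate is $\mathcal{O}(\|d\|^2)$ by Lipschitz continuity of $\nabla g$.

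The substantive part is the quantile term. Using $1$-Lipschitzness of $[\cdot]^+$ once more, it suffices to show $\bigl|Q_\epsilon(C^N(x+d))-\widetilde Q_{\epsilon,C}(x;d)\bigr|=\mathcal{O}(\|d\|^2)$. I would expand $\widetilde Q_{\epsilon,C}(x;d)=Q_\epsilon(C^N(x))+\nabla Q_\epsilon(C^N(x))^T\bigl(\widetilde C^N(x;d)-C^N(x)\bigr)$, insert the first-order Taylor model of $Q_\epsilon$ around $C^N(x)$ evaluated at $C^N(x+d)$, and split by the triangle inequality into (i) the Taylor remainder $\bigl|Q_\epsilon(C^N(x+d))-Q_\epsilon(C^N(x))-\nabla Q_\epsilon(C^N(x))^T\bigl(C^N(x+d)-C^N(x)\bigr)\bigr|$ and (ii) the linear mismatch $\bigl|\nabla Q_\epsilon(C^N(x))^T\bigl(C^N(x+d)-\widetilde C^N(x;d)\bigr)\bigr|$.

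For (i), \cref{prop: LipschitzQ} guarantees that $\nabla Q_\epsilon$ is (globally) Lipschitz, so the descent-lemma bound gives (i) $\le\tfrac12 L_Q\|C^N(x+d)-C^N(x)\|^2$; each coordinate $x\mapsto C(x,\xi_i)=\max_j c_j(x,\xi_i)$ is $L$-Lipschitz by \cref{eq: LipschitzContinuity} (a maximum of $L$-Lipschitz functions), hence $\|C^N(x+d)-C^N(x)\|^2\le NL^2\|d\|^2$ and (i) $=\mathcal{O}(\|d\|^2)$. For (ii), the gradient formula in the proof of \cref{prop: LipschitzQ} shows $\nabla Q_\epsilon(z)$ has nonnegative entries summing to one, so $\|\nabla Q_\epsilon(z)\|_1=1$ and (ii) $\le\|C^N(x+d)-\widetilde C^N(x;d)\|_\infty$; coordinatewise, $[\widetilde C^N]_i(x;d)=\max_j\{c_j(x,\xi_i)+\nabla c_j(x,\xi_i)^Td\}$, and since $\bigl|c_j(x+d,\xi_i)-c_j(x,\xi_i)-\nabla c_j(x,\xi_i)^Td\bigr|\le\tfrac12 L'\|d\|^2$ by \cref{eq: GradLipsCont_c}, the $1$-Lipschitzness of the max in the sup-norm yields $\bigl|[C^N]_i(x+d)-[\widetilde C^N]_i(x;d)\bigr|\le\tfrac12 L'\|d\|^2$. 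Adding the objective, $g$, and quantile bounds gives the claim, with the implied constant depending only on $N$, $L$, $L'$, $L_H$, $\epsilon$, and the Lipschitz constants of $\nabla f,\nabla g$.

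The main obstacle is step (ii)/(i): pushing a second-order estimate through the nonsmooth composition $x\mapsto Q_\epsilon\bigl((\max_j c_j(x,\xi_i))_i\bigr)$, i.e., controlling both the nested maximum and the implicitly defined quantile map simultaneously. This is resolved by combining the $1$-Lipschitzness of $\max$ in the sup-norm with the Lipschitz-gradient property and the $\ell_1$-normalization of $\nabla Q_\epsilon$ established in \cref{prop: LipschitzQ}; no differentiability of $C(\cdot,\xi_i)$ beyond that of the individual $c_j$ is needed, and the nonsmoothness of $C$ is absorbed into the $\mathcal{O}(\|d\|^2)$ error rather than appearing in the model $m$.
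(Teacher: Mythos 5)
Your proposal is correct and follows essentially the same route as the paper's proof: the same triangle-inequality split of the quantile-term error into the Taylor remainder of $Q_\epsilon$ at $C^N(x)$ and the linear mismatch $\nabla Q_\epsilon(C^N(x))^T\bigl(C^N(x+d)-\widetilde{C}^N(x;d)\bigr)$, both controlled via \cref{prop: LipschitzQ} together with \cref{eq: LipschitzContinuity} and \cref{eq: GradLipsCont_c}. The only differences are cosmetic: you bound the mismatch with $\|\nabla Q_\epsilon\|_1=1$ and the $1$-Lipschitzness of the max in the sup-norm, where the paper uses boundedness of $\nabla Q_\epsilon$ and an explicit argmax-index argument, and you spell out the routine $f$- and $g$-terms that the paper dispatches in one line.
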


\begin{proof}

We only prove that $|Q_\epsilon(C^N(x+d)) - \widetilde{Q}_{\epsilon,C}(x;d)| = \mathcal{O}(\| d \|^2)$ since the rest follows from the Lipschitz continuity of $\nabla f$ and $\nabla g$, and boundedness of $H$.

For brevity in the proof, we will use the following notation $C_y = C^N(y)$ and $\widetilde{C}_{(x;d)} = \widetilde{C}^N(x;d)$. We have that,
\begin{align*}
& \left\lvert Q_\epsilon ( C_{x+d} ) - \widetilde{Q}_{\epsilon,C}(x;d) \right\rvert = \left\lvert Q_\epsilon(C_{x+d}) - \widetilde{Q}_\epsilon(C_x;\widetilde{C}_{(x;d)}-C_x) \right\rvert \\
&\enskip \leq \left\lvert Q_\epsilon(C_{x+d}) - \widetilde{Q}_\epsilon(C_x;C_{x+d}-C_x) \right\rvert + \left\lvert \widetilde{Q}_\epsilon(C_x;C_{x+d}-C_x) - \widetilde{Q}_\epsilon(C_x;\widetilde{C}_{(x;d)}-C_x) \right\rvert.
\end{align*}
Analyzing the first term, from Taylor's Theorem we know that there exists $\zeta_1 \in (0,1)$ such that
\begin{align*}
&\left\lvert Q_\epsilon(C_{x+d}) - \widetilde{Q}_\epsilon(C_x;C_{x+d} - C_x) \right\rvert \\
&\qquad = \left\lvert Q_\epsilon(C_{x+d}) - Q_\epsilon(C_x) - \nabla Q_\epsilon(C_x)^T \left[ C_{x+d}-C_x \right] \right\rvert \\
&\qquad = \left\lvert \nabla Q_\epsilon(C_x + \zeta_1 [ C_{x+d}-C_x ])^T \left[ C_{x+d}-C_x \right] - \nabla Q_\epsilon(C_x)^T \left[ C_{x+d}-C_x \right] \right\rvert \\
&\qquad \leq \| \nabla Q_\epsilon(C_x + \zeta_1 [ C_{x+d}-C_x ]) - \nabla Q_\epsilon(C_x) \| \| C_{x+d}-C_x \|  \\
&\qquad \leq L_Q \zeta_1 \| C_{x+d}-C_x \|^2 \leq L_Q L^2 \zeta_1 \| d \|^2 \leq L_Q L^2 \| d \|^2,
\end{align*}
where the two last inequalities come from \cref{prop: LipschitzQ} and \cref{eq: LipschitzContinuity} (we denote $L_Q$ as the Lipschitz constant of $\nabla Q_\epsilon$).

For the second term we have
\begin{align*}
&\left\lvert \widetilde{Q}_\epsilon(C_x;C_{x+d}-C_x) - \widetilde{Q}_\epsilon(C_x;\widetilde{C}_{(x;d)}-C_x) \right\rvert \\
&\qquad = \left\lvert Q_\epsilon(C_x) + \nabla Q_\epsilon(C_x)^T \left[ C_{x+d}-C_x \right] - Q_\epsilon(C_x) - \nabla Q_\epsilon(C_x)^T \left[ \widetilde{C}_{(x;d)}-C_x \right] \right\rvert \\
&\qquad = \left\lvert \nabla Q_\epsilon(C_x)^T \left[ C_{x+d} - \widetilde{C}_{(x;d)} \right] \right\rvert \leq \left\| \nabla Q_\epsilon(C_x) \right\| \left\| C_{x+d} - \widetilde{C}_{(x;d)} \right\|.
\end{align*}
Because $\nabla Q_\epsilon$ by \cref{prop: LipschitzQ} is bounded, it only remains to show that $\| C_{x+d} - \widetilde{C}_{(x;d)} \| = \mathcal{O}(\| d \|^2)$.

To prove this, first notice that
$$c_j(x,\xi_i) + \nabla c_j(x,\xi_i)^Td \leq [\widetilde{C}_{(x;d)}]_i, \quad \forall j \in \{ 1, \ldots, m \}, i \in \{1,\ldots,N\}.$$
Therefore, from Taylor's Theorem we have that there exists $\zeta_2 \in (0,1)$ such that
\begin{align} 
&[C_{x+d}]_i - [\widetilde{C}_{(x;d)}]_i = c_{j_1^i}(x+d,\xi_i) - [\widetilde{C}_{(x;d)}]_i \nonumber \\
&\qquad \leq c_{j_1^i}(x+d,\xi_i) - c_{j_1^i}(x,\xi_i) - \nabla c_{j_1^i}(x,\xi_i)^T d \nonumber \\
&\qquad = \nabla c_{j_1^i}(x + \zeta_2 d, \xi_i)^T d - \nabla c_{j_1^i}(x,\xi_i)^T d \nonumber\\
&\qquad \leq \| \nabla c_{j_1^i}(x + \zeta_2 d, \xi_i) - \nabla c_{j_1^i}(x, \xi_i) \| \| d \| \leq L' \zeta_2 \| d \|^2 \leq L' \| d \|^2, \label{eq: CIneq_1}
\end{align}
where $j_1^i \in \argmax_j \{c_j(x+d,\xi_i) \}$. Analogous to \cref{eq: CIneq_1}, we get
\begin{align}
[\widetilde{C}_{(x;d)}]_i - [C_{x+d}]_i \leq c_{j_2^i}(x,\xi_i) + \nabla c_{j_2^i}(x,\xi_i)^Td - c_{j_2^i}(x+d,\xi_i) \leq L' \| d \|^2 , \label{eq: CIneq_2}
\end{align}
where $j_2^i \in \argmax_j \{ c_j(x,\xi_i) + \nabla c_j(x,\xi_i)^T d \}$. It follows from \cref{eq: CIneq_1} and \cref{eq: CIneq_2} that $\| C_{x+d} - \widetilde{C}_{(x;d)} \| \leq \sqrt{N} L' \| d \|^2$.
\end{proof}

Now that model \cref{eq: linear model} has been established, we proceed to give an outline of the algorithm used to minimize $\phi_\pi$. 

\subsection{Trust-Region Algorithm}

In this section we present an S$\ell_1$QP-type trust-region algorithm to minimize \cref{eq: Penalization}. This algorithm seeks to obtain a first-order stationary point of $\phi_\pi$ by solving a sequence of quadratic programs (QP). At each iteration $k$, we compute a step $d^k$ by minimizing the model $m(x^k,H^k;d)$ within a radius $\Delta_k$ for a given $H_k$. This problem is equivalent to
\begin{align}
\min &\quad f(x^k) + \nabla f(x^k)^T d + \frac{1}{2}d^T H^k d + \pi \left[ t^T \pmb{1}_p + w \right] \nonumber \\
\text{s.t.} &\quad \nabla g(x^k)^Td + g(x^k) \leq t, \nonumber \\
 &\quad \nabla c(x^k,\xi_i)^Td + c(x^k,\xi_i) \leq z_i \pmb{1}_m, \qquad i = \{1,\ldots,N\}, \label{eq: QPApprox}\\
 &\quad \nabla Q_\epsilon(C^N(x^k))^T(z-C^N(x^k)) + Q_\epsilon(C^N(x^k)) \leq w, \nonumber \\
 &\quad t, w \geq 0, \|d\|_\infty \leq \Delta^k. \nonumber
\end{align}
We describe the choice of $H^k$ in \cref{sec: Lagrangian}.

A step $d^k$, obtained from \cref{eq: QPApprox}, is accepted if it results in sufficient decrease of $\phi_\pi$, i.e., the step $d^k$ is taken only if the value $\phi_\pi(x^k + d^k)$ is sufficiently smaller than $\phi_\pi(x^k)$. We then proceed with iteration $k+1$, where now $x^{k+1} = x^k + d^k$. \Cref{alg: JCCPSolver} states the full algorithm.

The following theorem follows from \cite[Theorem 11.2.5]{ConnGoulToin00}. It states that any limit point $x^*$ of the sequence $\{x^k\}$ generated by \cref{alg: JCCPSolver} is a stationary point of function $\phi_\pi$. Thus, we conclude that $x^*$ is a local minimum of \cref{eq: SingleJCCP} if $x^*$ is a local minimizer of $\phi_\pi$, $g(x^*) \leq 0$, and $Q_\epsilon\left(C^N(x^*)\right) \leq 0$.

\begin{theorem} \label{teo: ConvTR}
Suppose that \cref{alg: JCCPSolver} does not terminate at a stationary point of $\phi_\pi$ in \cref{alg_step: dStep}, and that $x^*$ is a limit point of the sequence $\{x^k\}$ generated by \cref{alg: JCCPSolver}. Let the assumptions from \cref{prop: FirstOrderModel} hold and assume that, at each iteration, problem \cref{eq: QPApprox} is solved exactly. Then $x^*$ is a first-order critical point of $\phi_\pi$.
\end{theorem}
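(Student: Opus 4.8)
The plan is to verify that Algorithm~\ref{alg: JCCPSolver} fits into the abstract trust-region framework of \cite[Chapter~11]{ConnGoulToin00}, so that \cite[Theorem~11.2.5]{ConnGoulToin00} applies verbatim. That theorem guarantees first-order criticality of a limit point under essentially three hypotheses: (i) the objective $\phi_\pi$ is bounded below on the region of interest and has a model that is a genuine first-order model (i.e.\ the model error is $\mathcal{O}(\|d\|^2)$ uniformly in $d$ and in the bounded Hessian data), (ii) the step $d^k$ produced at each iteration achieves a fraction of the Cauchy decrease of the model $m(x^k,H^k;\cdot)$ over the trust region, and (iii) the acceptance test and trust-region update follow the standard pattern (accept when the ratio of actual to predicted reduction exceeds a fixed threshold, shrink the radius on rejection, allow it to grow on success). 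So the proof reduces to checking these three points against the structure of our algorithm and the results already proved.

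First I would invoke \cref{prop: FirstOrderModel}: under the Lipschitz-gradient assumptions on $f$, $g$, and each $c(\cdot,\xi_i)$, the piecewise-quadratic model $m(x^k,H^k;d)$ in \cref{eq: linear model} satisfies $|\phi_\pi(x^k+d)-m(x^k,H^k;d)|=\mathcal{O}(\|d\|^2)$ uniformly over $\|H^k\|\le L_H$. This is exactly the ``first-order model'' condition of the framework; note that \cref{prop: LipschitzQ} is what makes the quantile piece of $\phi_\pi$ amenable to this — $Q_\epsilon$ has Lipschitz gradient, so the composite constraint term contributes only an $\mathcal{O}(\|d\|^2)$ discrepancy. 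Second, I would observe that the QP \cref{eq: QPApprox} is precisely the exact minimization of $m(x^k,H^k;d)$ over the $\ell_\infty$ trust region $\|d\|_\infty\le\Delta^k$ (the epigraph variables $t,w,z$ just linearize the $[\cdot]^+$ and $\max_j$ operations). Since we solve it exactly, the computed step attains the \emph{global} model minimum over the trust region, which in particular dominates the Cauchy point; hence the fraction-of-Cauchy-decrease condition holds trivially. Third, I would point to the acceptance rule and radius update embedded in \cref{alg: JCCPSolver}: a step is accepted only when the achieved reduction in $\phi_\pi$ is at least a fixed fraction of the predicted reduction, and $\Delta^k$ is contracted on rejection — matching the standard scheme. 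Finally, $\phi_\pi$ is continuous and, since the algorithm is a descent method on $\phi_\pi$, the iterates stay in a level set; on any such level set $f$, $g$, $c$ and $Q_\epsilon$ are well-behaved, so $\phi_\pi$ is bounded below there, which is all the framework needs.

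The main obstacle, and the only place that requires genuine care rather than bookkeeping, is verifying that the \emph{nonsmooth} penalty term is handled correctly by the smooth-model machinery of \cite{ConnGoulToin00}: \cite[Theorem~11.2.5]{ConnGoulToin00} is stated for exact penalty functions of composite type $f+\pi\,\mathrm{dist}(\cdot)$, and one must confirm that $\phi_\pi$ in \cref{eq: Penalization} — with its $\|[g(x)]^+\|_1$ and $[Q_\epsilon(C^N(x))]^+$ terms — is indeed of that form and that the model $m$ of \cref{eq: linear model} is the associated ``composite'' model, i.e.\ that the inner linearizations $g(x)+\nabla g(x)^Td$ and $\widetilde Q_{\epsilon,C}(x;d)$ are the correct first-order surrogates. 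The subtle point here is that $\widetilde Q_{\epsilon,C}$ linearizes $C^N$ through the $\max_j$ before composing with $\nabla Q_\epsilon$, rather than linearizing the composite map $x\mapsto Q_\epsilon(C^N(x))$ directly; \cref{prop: FirstOrderModel} already establishes these two agree to $\mathcal{O}(\|d\|^2)$, so the model error bound survives, but one must be explicit that this is what licenses the application of the cited theorem. Once that identification is made, the convergence conclusion is immediate from \cite[Theorem~11.2.5]{ConnGoulToin00}, and the stated consequence — that $x^*$ is a local minimum of \cref{eq: SingleJCCP} when it is additionally feasible for $g$ and $Q_\epsilon$ — follows from the exact-penalty correspondence recalled after \cref{eq: Penalization} (see \cite[p.~299]{HiriLema93}).
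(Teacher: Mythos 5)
Your proposal is correct and follows essentially the same route as the paper, which likewise proves the result by invoking \cite[Theorem~11.2.5]{ConnGoulToin00}, with \cref{prop: FirstOrderModel} supplying the required first-order model property and the exact QP solve (relaxable to a Cauchy-decrease condition) covering the step-quality hypothesis. You simply spell out the verification of the framework's hypotheses in more detail than the paper does.
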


To apply \cite[Theorem 11.2.5]{ConnGoulToin00}, it is necessary that $m(x,H;d)$ is a first-order model, which was shown in \cref{prop: FirstOrderModel}. The condition that problem \cref{eq: QPApprox} is solved exactly can be relaxed as long as some Cauchy decrease condition holds (refer to \cite[Theorem 11.2.5]{ConnGoulToin00}).

\begin{algorithm}[htbp]
    \textbf{Inputs:} $\pi > 0$ (penalty parameter); $\hat{\Delta} > 0$, $\Delta_0 \in (0,\hat{\Delta})$, $\eta \in (0,1)$, $\tau_1 \in (0,1)$, and $\tau_2 > 1$ such that $1/\tau_2 \leq \tau_1$ (trust region parameters); $x_0$ (initial point)
	\begin{algorithmic}[1]
        \FOR{$k = 0,1,2,\ldots$}
		\STATE Set $C(x^k,\xi_i) = \max_{j = 1,\ldots,m}\{c_j(x^k,\xi_i) \}$ for all scenarios, compute $Q_\epsilon(C^N(x^k))$ and $\nabla Q_\epsilon(C^N(x^k))$, and choose $H^k$.
		\STATE Obtain $d^k$ by solving \cref{eq: QPApprox} (\textbf{if} $d^k = 0$ \textbf{stop}, stationary point reached). \label{alg_step: dStep}
		\STATE Compute the ratio $\rho^k =  \frac{\phi_\pi(x^k) - \phi_\pi(x^k + d^k)}{m(x^k,H^k;0) - m(x^k,H^k;d^k)}.$
		\IF {$\rho^k < \eta$}
    		\STATE $\Delta^{k+1} = \tau_1 \min\{\Delta^k, \|d^k\|_\infty\}$; $x^{k+1} = x^k$
		\ELSIF {$\rho^k \geq \eta$ and $\|d^k \|_\infty = \Delta^k$}
        	\STATE $\Delta^{k+1} = \min\{\tau_2\Delta^k,\hat{\Delta}\}$; $x^{k+1} = x^k + d^k$
    	\ELSE
    		\STATE $\Delta^{k+1} = \Delta^k$; $x^{k+1} = x^k + d^k$
    	\ENDIF
    	\STATE $k = k+1$
    	\ENDFOR
	\end{algorithmic}
	\caption{S$\ell_1$QP trust-region algorithm to solve JCCP}\label{alg: JCCPSolver}
\end{algorithm}

\subsection{Choice of Hessian} \label{sec: Lagrangian}

In this section we propose a choice of the Hessian $H^k$ in \cref{eq: QPApprox} that aims to attain fast local convergence.
The key observation is a close relationship between the non-smooth formulation \cref{eq: SingleJCCP} and the smooth optimization problem
\begin{align}
    \min_{x \in \mathbb{R}^n} &\quad f(x) \nonumber\\
    \text{s.t.} &\quad g(x) \leq 0, \label{eq: SmallJCCP} \\
        & \quad\overline{q}(x) := Q_\epsilon(\overline{C}^N(x)) \leq 0, \nonumber
\end{align}
where $\overline{C}^N(x)$ is defined below in \cref{lem: KKTPoint}.

Recall that, if $x^*$ is a local minimizer of \cref{eq: SingleJCCP}, then $x^*$ and $z^* = C^N(x^*)$ is a KKT point of \cref{eq: JCCP} under some constraint qualification (see \cref{teo: ExactPenalty}).
The following lemma shows that $x^*$ is also a KKT point of \cref{eq: SmallJCCP}.

\begin{lemma} \label{lem: KKTPoint}
Let $(x^*,z^*,\nu^*,\mu^*,\lambda^*)$ be a KKT point of \cref{eq: JCCP} with $z^* = C^N(x^*)$, where $\nu^*$, $\mu^*$ and $\lambda^*$ are the multipliers associated to constraints \cref{eq: Det_g}, \cref{eq: MaxConstraints} and \cref{eq: VaRJCCPReformulation}, respectively. For each $i \in \{1,\ldots,N\}$, set
\begin{align}
   [\bar{\mu}_i]_j = \frac{[\mu^*_i]_j}{ \lambda^* \left[ \nabla Q_\epsilon(C^N(x^*)) \right]_i}, &\quad\forall \enskip j \in \{1,\ldots,m\}, \label{eq: muBar}
\end{align}
if $\lambda^* \neq 0$ and $\left[ \nabla Q_\epsilon(C^N(x^*)) \right]_i \neq 0$. Else, select one $j$ such that $c_j(x^*,\xi_i) = C(x^*,\xi_i)$ and define $[\bar{\mu}_i]_j = 1$ and $[\bar{\mu}_i]_\ell = 0$ if $\ell \neq j$. Also, define the following function
\begin{align}
   [\overline{C}^N(x)]_i = \bar{\mu}_i^Tc(x,\xi_i), &\quad\forall \enskip i \in \{1,\ldots,N\}. \label{eq: CBar}
\end{align}
Then, $(x^*,\nu^*,\lambda^*)$ is a KKT point of \cref{eq: SmallJCCP}.
\end{lemma}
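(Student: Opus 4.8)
The plan is to write down the KKT system of \cref{eq: JCCP} at $(x^*,z^*,\nu^*,\mu^*,\lambda^*)$ with $z^* = C^N(x^*)$, and then verify each KKT condition of \cref{eq: SmallJCCP} at $(x^*,\nu^*,\lambda^*)$ directly from it. The Lagrangian of \cref{eq: JCCP} is $f(x) + \nu^T g(x) + \sum_{i=1}^N \mu_i^T\big(c(x,\xi_i) - z_i\pmb{1}_m\big) + \lambda Q_\epsilon(z)$, so stationarity with respect to $x$ and to each $z_i$ reads
$$\nabla f(x^*) + \nabla g(x^*)\nu^* + \sum_{i=1}^N \nabla c(x^*,\xi_i)\mu_i^* = 0, \qquad \lambda^*[\nabla Q_\epsilon(z^*)]_i = \pmb{1}_m^T\mu_i^*,\quad i=1,\dots,N,$$
together with primal feasibility ($g(x^*)\le 0$, $c(x^*,\xi_i)\le z_i^*\pmb{1}_m$, $Q_\epsilon(z^*)\le 0$), dual feasibility ($\nu^*,\mu_i^*,\lambda^*\ge 0$), and complementary slackness. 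Since $z_i^* = C(x^*,\xi_i)=\max_j c_j(x^*,\xi_i)$, complementarity for \cref{eq: MaxConstraints} forces each $\mu_i^*$ to be supported on the argmax set $J_i := \{j : c_j(x^*,\xi_i) = C(x^*,\xi_i)\}$; moreover $[\nabla Q_\epsilon(z^*)]_i\in[0,1]$ by \cref{prop: LipschitzQ}, which is what makes the vectors $\bar\mu_i$ of \cref{eq: muBar} well defined and nonnegative.

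First I would verify that $\overline{C}^N(x^*) = C^N(x^*) = z^*$. In the generic case ($\lambda^*\ne 0$ and $[\nabla Q_\epsilon(z^*)]_i\ne 0$), using that $[\mu_i^*]_j$ is nonzero only for $j\in J_i$ and that the $z_i$-stationarity equation gives $\pmb{1}_m^T\mu_i^* = \lambda^*[\nabla Q_\epsilon(z^*)]_i$, I get $[\overline{C}^N(x^*)]_i = \bar\mu_i^T c(x^*,\xi_i) = \frac{C(x^*,\xi_i)}{\lambda^*[\nabla Q_\epsilon(z^*)]_i}\,\pmb{1}_m^T\mu_i^* = C(x^*,\xi_i) = z_i^*$ (and incidentally $\pmb{1}_m^T\bar\mu_i = 1$, so $\bar\mu_i$ is a convex combination of unit vectors indexed by $J_i$). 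In the fallback case the choice $[\bar\mu_i]_j=1$ for some $j\in J_i$ gives the same value. Consequently $\nabla Q_\epsilon(\overline{C}^N(x^*)) = \nabla Q_\epsilon(z^*)$ and $\overline{q}(x^*) = Q_\epsilon(\overline{C}^N(x^*)) = Q_\epsilon(z^*)\le 0$, so $x^*$ is primal feasible for \cref{eq: SmallJCCP} (feasibility $g(x^*)\le 0$ is inherited).

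The key step is the identity $\lambda^*[\nabla Q_\epsilon(z^*)]_i\,\bar\mu_i = \mu_i^*$ for every $i$. In the generic case it is immediate from \cref{eq: muBar}. In each degenerate case it also holds: if $\lambda^*=0$, or if $[\nabla Q_\epsilon(z^*)]_i=0$, then the $z_i$-stationarity equation combined with $\mu_i^*\ge 0$ forces $\mu_i^*=0$, so both sides vanish. Now the chain rule (applied to $\overline{q}(x)=Q_\epsilon(\overline{C}^N(x))$ with $[\overline{C}^N(x)]_i = \bar\mu_i^T c(x,\xi_i)$) gives $\nabla\overline{q}(x^*) = \sum_{i=1}^N [\nabla Q_\epsilon(z^*)]_i\,\nabla c(x^*,\xi_i)\bar\mu_i$, and multiplying by $\lambda^*$ and using the identity yields $\lambda^*\nabla\overline{q}(x^*) = \sum_{i=1}^N \nabla c(x^*,\xi_i)\mu_i^*$. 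Substituting into the $x$-stationarity of the Lagrangian of \cref{eq: JCCP} produces exactly $\nabla f(x^*) + \nabla g(x^*)\nu^* + \lambda^*\nabla\overline{q}(x^*) = 0$, i.e.\ stationarity of the Lagrangian of \cref{eq: SmallJCCP}. Dual feasibility $\nu^*,\lambda^*\ge 0$ carries over unchanged, and complementarity $\nu^{*T}g(x^*)=0$ and $\lambda^*\overline{q}(x^*) = \lambda^* Q_\epsilon(z^*) = 0$ is inherited verbatim, which finishes the proof.

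The main obstacle is not a single estimate but the bookkeeping around the two branches of \cref{eq: muBar}: one must observe that whenever the denominator in \cref{eq: muBar} vanishes the corresponding $\mu_i^*$ is already zero (so the fallback definition is consistent and contributes nothing to either stationarity equation), and one must invoke $[\nabla Q_\epsilon(z^*)]_i\in[0,1]$ from \cref{prop: LipschitzQ} to guarantee $\bar\mu_i\ge 0$ and the convex-combination property. Everything else is the chain rule plus the support structure of $\mu_i^*$ implied by complementary slackness for the $\max$-constraints \cref{eq: MaxConstraints}.
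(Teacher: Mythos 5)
Your proof is correct and follows essentially the same route as the paper's: write down the KKT system of \cref{eq: JCCP}, verify $\overline{C}^N(x^*)=z^*$ using complementarity and the $z$-stationarity condition, and then apply the chain rule plus the identity $\lambda^*[\nabla Q_\epsilon(z^*)]_i\,\bar\mu_i=\mu_i^*$ to recover stationarity, feasibility, and complementarity for \cref{eq: SmallJCCP}. Your explicit remark that in the degenerate branches ($\lambda^*=0$ or $[\nabla Q_\epsilon(z^*)]_i=0$) the $z$-stationarity equation together with $\mu_i^*\geq 0$ forces $\mu_i^*=0$ is just a slightly more careful bookkeeping of a point the paper leaves implicit.
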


\begin{proof}

Because $(x^*,z^*,\nu^*,\mu^*,\lambda^*)$ is a KKT point of \cref{eq: JCCP}, the following conditions hold
\begin{subequations}
\begin{align}
    \nabla f(x^*) + \nabla g(x^*)\nu^* + \sum_{i=1}^N \nabla c(x^*,\xi_i)\mu^*_i &= 0, \label{eq: GradLagX}\\
    \lambda^*\left[ \nabla Q_\epsilon(z^*) \right]_i - \mu^{*T}_i \pmb{1}_m &= 0, \quad\forall \enskip i \in \{1,\ldots,N\}, \label{eq: GradLagZ}\\
    c(x^*,\xi_i) - z^*_i\pmb{1}_m &\leq 0, \quad\forall \enskip i \in \{1,\ldots,N\}, \label{eq: FeasibilityZ} \\
    Q_\epsilon(z^*) &\leq 0, \label{eq: FeasibilityVaR}\\
    \mu_i^T(c(x^*,\xi_i) - z^*_i\pmb{1}_m) &= 0, \quad\forall \enskip i \in \{1,\ldots,N\}, \label{eq: Complementarity} \\
    \lambda^* Q_\epsilon(z^*) &= 0, \quad\forall \enskip i \in \{1,\ldots,N\}, \label{eq: ComplQ} \\
    \lambda^* \geq 0, \enskip \mu_i^* &\geq 0, \quad\forall \enskip i \in \{1,\ldots,N\}, \label{eq: PositiveMult}
\end{align}
\end{subequations}
where $\nabla g$ and $\nabla c$ correspond to the transpose of the Jacobians of $g$ and $c$, respectively. Notice that the KKT conditions of \cref{eq: JCCP} associated with $g$ have been left out because they trivially extend to the KKT conditions of \cref{eq: SmallJCCP}.

If $\lambda^* \neq 0$ and $\left[ \nabla Q_\epsilon(C^N(x^*)) \right]_i \neq 0$, then
\begin{align}
    \left[\overline{C}^N(x^*)\right]_i &\stackrel{\cref{eq: CBar}}{=} \bar{\mu}_i^Tc(x^*,\xi_i) \stackrel{\cref{eq: muBar}}{=} \sum_{j = 1}^m \left( \frac{[\mu^*_i]_j}{ \lambda^* \left[ \nabla Q_\epsilon(C^N(x^*)) \right]_i} \right)c_j(x^*,\xi_i) \label{eq: CBarEqualZ}\\
    &\stackrel{\cref{eq: Complementarity}}{=} z_i^* \sum_{j = 1}^m \left( \frac{[\mu^*_i]_j}{ \lambda^* \left[ \nabla Q_\epsilon(z^*) \right]_i} \right) \stackrel{\cref{eq: GradLagZ}}{=} z_i^*. \nonumber
\end{align}
If $\lambda^* = 0$ or $\left[ \nabla Q_\epsilon(C^N(x^*)) \right]_i = 0$, it follows from the definition of $\bar{\mu}_i$, that $[\overline{C}^N(x^*)]_i = \bar{\mu}_i^Tc(x^*,\xi_i) = z_i^*$. Thus, feasibility, $Q_\epsilon(\overline{C}^N(x^*)) = Q_\epsilon(z^*) \leq 0$, and complementarity, $\lambda^* Q_\epsilon(z^*) = \lambda^* Q_\epsilon(\overline{C}^N(x^*))=0$, hold.

Now, note that the gradient of the Lagrangian of \cref{eq: SmallJCCP} is given by
\begin{align}
  \nabla \overline{\mathcal{L}}(x,\nu,\lambda) &= \nabla f(x) + \nabla g(x)\nu + \lambda \sum_{i=1}^N \left[ \nabla Q_\epsilon(\overline{C}^N(x)) \right]_i \nabla c(x,\xi_i)\bar{\mu}_i. \label{eq: GradJCCP}
\end{align}
Then,
\begin{align*}
    \nabla \overline{\mathcal{L}}(x^*,\nu^*,\lambda^*) &\stackrel[\cref{eq: GradJCCP}]{\cref{eq: CBarEqualZ}}{=} \nabla f(x^*) + \nabla g(x^*)\nu^* + \lambda^* \sum_{i=1}^N \left[ \nabla Q_\epsilon(\overline{C}^N(x^*)) \right]_i \nabla c(x^*,\xi_i)\bar{\mu}^*_i \\
    &\stackrel[\cref{eq: GradLagZ}]{\cref{eq: muBar}}{=} \nabla f(x^*) + \nabla g(x^*)\nu^* + \sum_{i=1}^N \nabla c(x^*,\xi_i)\mu^*_i \stackrel{\cref{eq: GradLagX}}{=} 0.
\end{align*}
Thus, $(x^*,\nu^*,\lambda^*)$ is a KKT point of \cref{eq: SmallJCCP}.
\end{proof}

\Cref{lem: KKTPoint} shows that we can use the norm of \cref{eq: GradJCCP} in the termination criterion for our algorithm. Also, from complementary slackness and feasibility, we have that most of the entries of the vector $\mu_i^*$ are zero. The reason is that only the $j$th entries of $\mu_i^*$ such that $c_j(x^*,\xi_i) = C(x^*,\xi_i)$ can be nonzero. This means that the positive entries of $\mu_i^*$ indicate which entries of vector $c(x^*,\xi_i)$ correspond to the maximum $C(x^*,\xi_i)$. Therefore, $\overline{C}^N(\cdot)$ can be interpreted as a smooth function that coincides with the maximum $C^N(\cdot)$ at the solution $x^*$.

Suppose we apply the basic SQP algorithm \cite{NoceWrig06} to \cref{eq: SmallJCCP} and compute steps $d^k$ from the solution of the QP
\begin{align}
\min &\quad f(x^k) + \nabla f(x^k)^T d + \frac{1}{2}d^T {H}^k d\nonumber\\
\text{s.t.} &\quad \nabla g(x^k)^Td + g(x^k) \leq 0, \label{eq: SQP}\\
 &\quad  \nabla \overline{q}(x^k)^Td + \overline{q}(x^k) \leq 0\nonumber
\end{align}
to update the iterate $x^{k+1} = x^k +d^k$.
If ${H}^k$ is chosen as the Hessian of the Lagrangian function $\overline{\mathcal{L}}$ of \cref{eq: SmallJCCP} at the current iterate $(x^k,\nu^k,\lambda^k)$, i.e.,
\begin{align}
    H^k = \nabla^2 f(x^k) &+ \sum_{j=1}^p\nu_j^k \nabla^2 g_j(x^k) + \lambda^k \sum_{i = 1}^N \left[\nabla Q_\epsilon(\overline{C}^N(x^k)) \right]_i \left[ \sum_{j = 1}^m \bar{\mu}_{i_j} \nabla^2 c_j(x^k,\xi_i) \right] \nonumber \\
    &+ \lambda^k \nabla \overline{C}^N(x^k) \left[\nabla^2 Q_\epsilon(\overline{C}^N(x^k)) \right] \left[ \nabla \overline{C}^N(x^k) \right]^T,\label{eq: SQPHess}
\end{align}
where $[\nabla \overline{C}^N(x^k)]_{\boldsymbol{\cdot} i} = \nabla c(x,\xi_i)\bar{\mu}_i$ for all $i \in \{1,\ldots,N\}$,
then this algorithm exhibits quadratic local convergence, under standard regularity assumptions.

%
Our goal is to mimic this behavior in \cref{alg: JCCPSolver}.  
Using arguments similar to those in the proof of \cref{lem: KKTPoint} one can show that the steps generated by the QP \cref{eq: QPApprox} converge to the SQP steps from \cref{eq: SQP} up to first order as $x^k\to x^*$, under suitable regularity assumptions.
This suggests to choose $H^k$ defined by \cref{eq: SQPHess} for the step computation \cref{eq: QPApprox} in \cref{alg: JCCPSolver}, with the hope that the steps used in \cref{alg: JCCPSolver} approach the SQP steps for \cref{eq: SmallJCCP} and result in fast local convergence.

However, the definition of \cref{eq: SmallJCCP} requires the knowledge of the optimal solution in \cref{eq: muBar}, which is not available.  Instead, we approximate the optimal solution with the current iterate and define
$$[\bar{\mu}^k_i]_j = \frac{[\mu^k_i]_j}{ \lambda^k \left[ \nabla Q_\epsilon(C^N(x^k)) \right]_i}, \quad\forall \enskip j \in \{1,\ldots,m\}, \enskip i \in \{1,\ldots,N\}.$$
Then, the Hessian matrix in the QP \cref{eq: QPApprox} is chosen as follows:
\begin{align}
    H^k = \nabla^2 f(x^k) &+ \sum_{j=1}^p\nu_j^k \nabla^2 g_j(x^k) + \lambda^k \sum_{i = 1}^N \left[\nabla Q_\epsilon({C}^N(x^k)) \right]_i \left[ \sum_{j = 1}^m \bar{\mu}^k_{i_j} \nabla^2 c_j(x^k,\xi_i) \right] \nonumber \\
    &+ \lambda^k \nabla \overline{C}^N(x^k) \left[\nabla^2 Q_\epsilon({C}^N(x^k)) \right] \left[ \nabla \overline{C}^N(x^k) \right]^T, \label{eq: HessJCCP}
\end{align}
where $[\nabla \overline{C}^N(x^k)]_{\boldsymbol{\cdot}i} = \nabla c(x,\xi_i)\bar{\mu}^k_i$ for all $i \in \{1,\ldots,N\}$.
Note the subtle difference with \eqref{eq: SQPHess}: The argument at which derivatives of $Q_\epsilon$ are evaluated is based on the true maximum values given by the non-smooth function $C^N(x^k)$ instead of the smoothed version $\overline{C}^N(x^k)$.
With this choice of $H^k$, we observed superlinear local convergence of \cref{alg: JCCPSolver} in our experiments.


\section{Numerical results} \label{sec: NumRes}

In this section we report results demonstrating the performance of our proposed approach on two single chance-constrained (SCC) examples and one joint chance-constrained (JCC) example. The goal is to observe the performance of our method from different perspectives such as: solution quality, speed, and features related to the smoothing parameter $\epsilon$.

In all the experiments we use the $\Gamma_{\epsilon}$ function defined from $\gamma_\epsilon$ in \cref{eq: gamma_eps}, which defines a twice continuously differentiable function. This allows us to incorporate first- and second-order derivatives into the algorithms. All computations were executed on \texttt{Ubuntu 16.04} with 256GB RAM and a CPU with two \texttt{Intel Xeon} processors with 10 cores each, running at 3.10GHz. The algorithm and all experiments were implemented in \texttt{Matlab R2015b}.

In these experiments, because our choices of $\alpha$ and $N$ are such that $(1-\alpha)N \in \mathbb{Z}$, we opt to add a constant $b = 1/2$ to the left-hand side of \cref{eq: SmoothVaR} in our implementation (refer to \cref{rem: NotIntegerB}).

\subsection{Choosing the smoothing parameter} \label{sec: ChoosingEpsilon}

We introduced an approximation of problem \cref{eq: CCP} that does not need explicit information about the distribution of the random variables, their mean or their variance, and that only requires a sample to compute the constraint approximation. However, the quality of the solutions obtained from the approximation depends on the choice of the parameter $\epsilon$. As established in \cref{sec: Feasibility}, $\epsilon$ is related to the feasibility of the solutions. Moreover, in our experiments we observed that large values of $\epsilon$ are associated with fewer iterations and less variability in the solutions but at the same time lead to conservativeness, while small values work in the opposite manner. Therefore, a good choice of $\epsilon$ is of vital importance to the solution quality.

For a fixed sample, we propose a binary search algorithm to find a value of $\epsilon$ which yields a solution that is
feasible to the chance constraint, but not too conservative. We begin by setting $\ell=0$, selecting an initial $\epsilon_0$, and setting $\epsilon_{LB} = 0$ and $\epsilon_{UB} = \infty$. Then, we solve problem \cref{eq: SingleJCCP} for $\epsilon_\ell$ and estimate the true probability $p_\ell = \proba(C(x_\ell^*,\xi) \leq 0)$, where $x_\ell^*$ is the optimal solution of problem \cref{eq: SingleJCCP} using the parameter $\epsilon_\ell$. If $p_\ell > 1-\alpha$, then $\epsilon_{\ell+1} = (\epsilon_\ell + \epsilon_{LB})/2$ and $\epsilon_{UB} = \epsilon_\ell$; if $p_\ell < 1-\alpha$, then $\epsilon_{\ell+1} = (\epsilon_{UB} + \epsilon_\ell)/2$, or $\epsilon_{\ell+1} = 2\epsilon_0$ if $\epsilon_{UB} = \infty$, and $\epsilon_{LB} = \epsilon_\ell$. We now solve the problem again for $\epsilon_{\ell+1}$ and compute $p_{\ell+1}$. We continue in this fashion until the estimated probability, $p_\ell$, is within a threshold of the target probability $1-\alpha$.


One way to pick the initial $\epsilon_0$ for the binary search is as follows. First, compute $x_0^*$ as the optimal solution of the ``robust'' formulation of \cref{eq: CCP},
$$\min_{x \in X} f(x) \quad \text{s.t. } C(x,\xi_i) \leq 0 \enskip \forall i \in \{1,\ldots,N\}.$$
Then, evaluate the standard deviation of the sample $\{C(x_0^*,\xi_1),\ldots,C(x_0^*,\xi_N)\}$, and set $\epsilon_0$ to be twice the standard deviation. This way of selecting $\epsilon_0$ usually renders conservative solutions; however, because we have observed that the optimization algorithm converges more quickly for large values of $\epsilon$, we prefer to start with a large value of $\epsilon$ that is decreased afterwards. This choice of $\epsilon_0$ is motivated by the attempt to minimize the mean square error in \cref{eq: MSESmoothKernel}, for a fixed $x$. In \cite{AlemBolaGuil12}, the authors show that the best choice of $\epsilon$ is proportional to its standard deviation if everything is assumed to be normal. Thus, we define $\epsilon_0$ as a multiple of the standard deviation evaluated at $x_0^*$.

To estimate the true probability of constraint violation, we obtain an out-of-sample approximation using a sample with a large number of scenarios $N'$ and compute the quantity $\sum_{i = 1}^{N'} \mathbbm{1}(C(x,\xi_i) \leq 0) / N'$ as an approximation of $\proba(C(x,\xi) \leq 0)$.

To speed up the binary search, we use the solution $x^*_{\epsilon_k}$, with its multipliers $\lambda^*_{\epsilon_k}$ and $\bar{\mu}^*_{\epsilon_k}$, to initialize the algorithm for $\epsilon_{k + 1}$, i.e., $x_0 = x^*_{\epsilon_k}$, $\lambda_0 = \lambda^*_{\epsilon_k}$, and $\bar{\mu}_0 = \bar{\mu}^*_{\epsilon_k}$ to solve problem \cref{eq: JCCP} for $\epsilon_{k+1}$.

For the experiments in the following section, we terminate the binary search if $|\proba(C(x_\ell^*,\xi) - (1-\alpha)| \leq 10^{-4}$ or the number of bisections exceeds 10.

\subsection{Single Chance Constraints} \label{sec: NumResSCCP}

In this section, we present the outcome of two experiments using a nonlinear solver to solve our reformulation for a SCCP. Both examples are solved by \texttt{Knitro 10.1.2}. To compute $Q_\epsilon$, we use the \texttt{Matlab} function \texttt{fzero} to find the root of the nonlinear function \cref{eq: SmoothVaR} from the initial point $C_{[M]}(x)$. We also provide \texttt{Knitro} both the gradient and Hessian of $Q_\epsilon$ as given in \cref{eq: ImplicitGradient} and \cref{eq: SCCHessian}.

\subsubsection{A non-convex example}\label{sec: NonConvEg}

We first investigate the performance of the NLP reformulation for the non-convex problem
$$\min_{x, y} \enskip y \quad \text{s.t. } \proba(c(x,\xi) \leq y) \geq 0.95,$$
where $c(x,\xi) = 0.25x^4 - 1/3x^3 - x^2 + 0.2x - 19.5 + \xi_1x + \xi_2,$
and $\xi_1 \sim N(0,3)$ and $\xi_2 \sim N(0,144)$ are independent random variables. This problem is similar to the one presented in \cite{CurtWaecZava18}.

In this experiment we use a sample of size $N = 1,000$ and two different smoothing parameters: $\epsilon = 1$ and $\epsilon = 0.1$. We used 10 different feasible initial points for each $\epsilon$, given by $x_0 = (t,2.5)$, for $t$ equally spaced in the interval $[-1.5,2.5]$. The solving time for all starting points and both $\epsilon$ values is less than 0.0867 seconds. In \cref{fig: SCCP_NonConvex} we observe the results of all the different runs. In this figure, the solid line represents the boundary of the empirical feasible region, while the dotted line represents the boundary of the smooth approximation to the feasible region. All the points above the solid and dotted line are feasible for the empirical and smooth approximations, respectively. We see that the smaller value of $\epsilon$ translates into a smooth curve that follows the empirical quantile more closely, while the larger value renders a ``less noisy'' function.

\begin{figure}[htbp]
\centering
\begin{subfigure}{0.5\textwidth}
  \centering
  \includegraphics[width=0.99\linewidth]{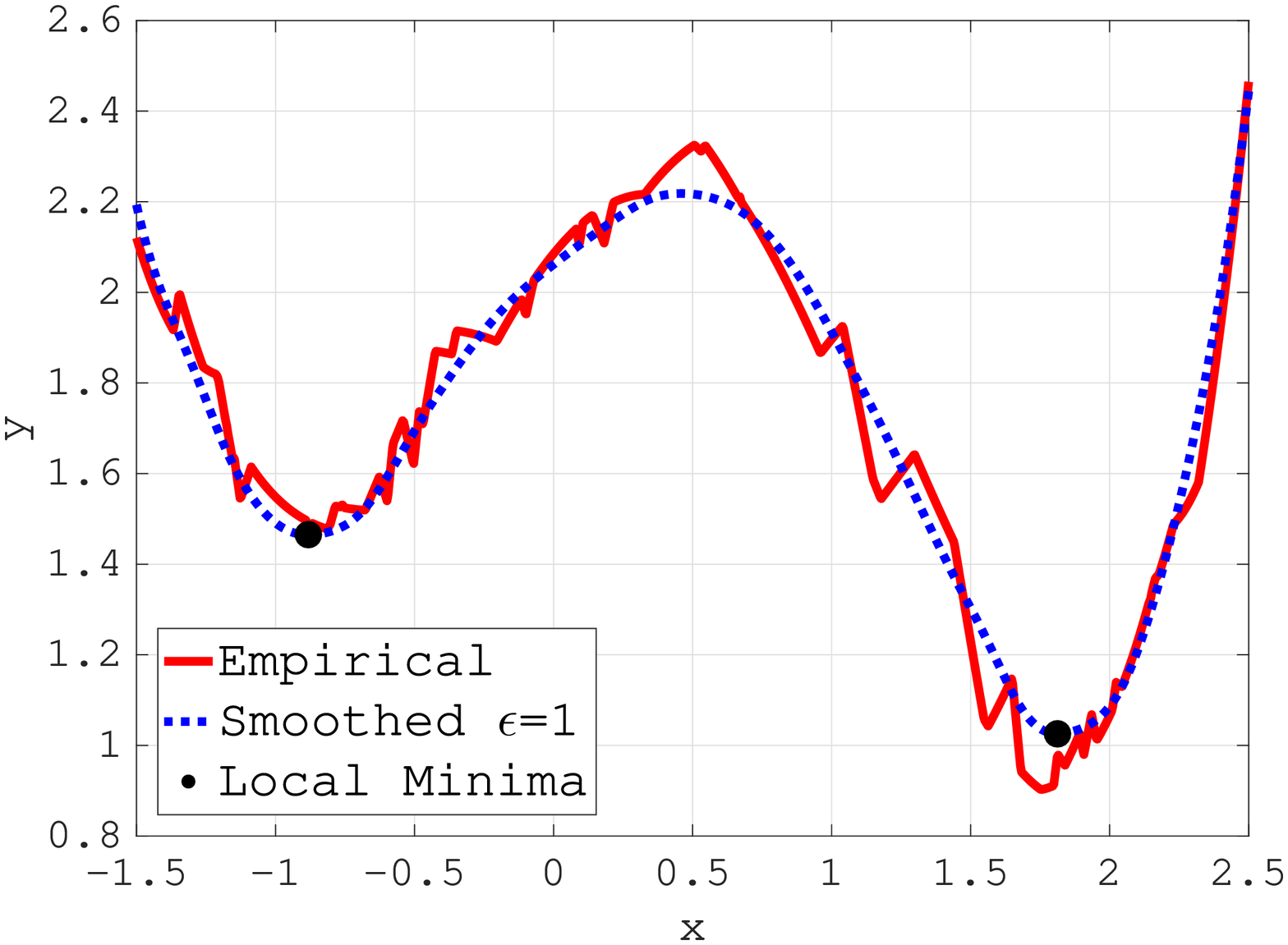}
  \caption{$\epsilon = 1$}
\end{subfigure}%
\begin{subfigure}{0.5\textwidth}
  \centering
  \includegraphics[width=0.99\linewidth]{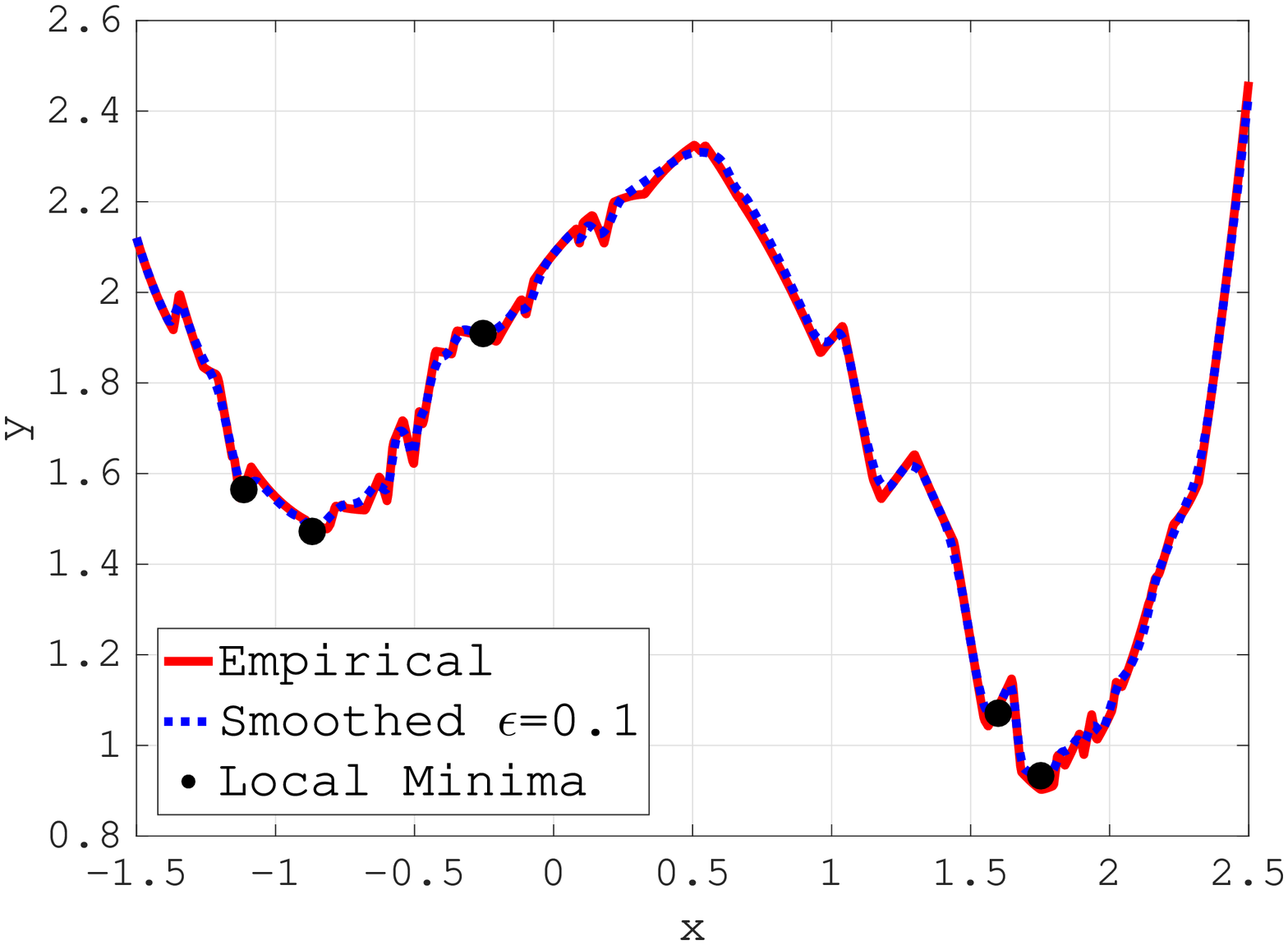}
  \caption{$\epsilon = 0.1$}
\end{subfigure}
\caption{Local minima found starting from 10 different points for two values of $\epsilon$.}
\label{fig: SCCP_NonConvex}
\end{figure}

\Cref{fig: SCCP_NonConvex} shows that our approach is designed to find local minima instead of global minima, and that the quality of the solution not only depends on the starting point but on the choice of $\epsilon$. This can be seen from the points in \cref{fig: SCCP_NonConvex}. Notice that the true problem seems to have only two local minimizers, one around $x=1.8$ and another around $x=-0.8$. However, when $\epsilon = 0.1$, the solver gets attracted to spurious local minima for three different starting points.

It is also interesting to notice that the algorithm takes longer to find a solution the smaller $\epsilon$ is. For $\epsilon = 1$, the number of iterations ranged between 6 and 10 with an average of 8.2; however, for $\epsilon = 0.1$, the number of iterations ranges between 7 and 14 with an average of 11.6. This exemplifies some of the claims made in \cref{sec: SmoothVaR} and emphasizes the importance of choosing $\epsilon$ appropriately.

\subsubsection{Reinsurance portfolio example} \label{sec: ReinsuranceSCCP}

We consider a Value-at-Risk (VaR) minimization problem for reinsurance portfolios. The decision variable $x_j$, for $j = 1,\ldots,n$, represents the fraction of the risk contract $j$ that the reinsurer is willing to take. We would like to choose the optimal fraction of the contracts such that the portfolio premium satisfies a pre-established minimum value and that the $0.95$-VaR is minimized. Each contract is subject to a stochastic loss. This data is simulated to represent risk and returns of businesses. The characteristics of these loss matrices include non-negativity, sparsity, positive skewness, and high kurtosis (this problem was constructed as in~\cite{FengWaecStau15}).

Notice that the $0.95$-VaR is defined the same way as the $0.95$-quantile, so we can therefore use $Q^{0.95}$ as the objective function. Thus, the problem can be modeled as
$$\min_{x\in [0,1]^n} Q^{0.95}(L^Tx) \quad \text{s.t. } c^Tx \geq c^*,$$
where the random variable $L_j$ represents the losses of contract $j$, $c_j$ are the market premiums of the risk contracts, and $c^* = 0.1\sum_{j = 1}^n c_j$ is the minimum level of portfolio premium. The chance-constrained reformulation of the problem is
$$ \min_{x\in [0,1]^n, z \in \mathbb{R}} z \quad \text{s.t. }  \proba(L^Tx \leq z) \geq 1-\alpha, \enskip c^Tx \geq c^*.$$
For this example, we considered $25$ contracts ($n=25$) for different sample sizes $N$, and we ran 10 replications for each $N$.

In \cref{tab: SCCP_Normal}, we present the statistics of the \texttt{Knitro} runs using formulation \cref{eq: VaRFormulation}, the quantile-constrained NLP approximation of the problem. The times reported include all NLP solves executed during the bisection search to tune the smoothing parameter $\epsilon$ described in \cref{sec: ChoosingEpsilon}. We see that the solution time grows modestly with respect to the sample size, and that the number of iterations are roughly the same even when the sample size increases.
The last column displays the average value of the tuned smoothing parameter.  Consistent with our theoretical analysis in \cref{sec: Feasibility} and the results in \cite{Azza81}, we observe that the value of $\epsilon$ that yields a solution that is ``just feasible'' decreases as the sample size increases.

\begin{table}[htbp]
\centering
\begin{tabular}{ c | c | c | c | c }
   N & Avg. time (sec) & Max. time (sec) & Avg. iter (per $\epsilon$) & Avg. $\epsilon$ \\\hline\hline
 200 &        1.1301 & 1.8285 & 13.236 & 1628.3 \\
 500 &        1.4797 & 2.6307 & 14.250 & 1135.6 \\
2000 &        10.293 & 13.643 & 13.479 & 700.68 \\
5000 &        49.943 & 113.91 & 14.073 & 554.47
\end{tabular}
\caption{Number of iterations and CPU time needed by \texttt{Knitro} to return a solution.}
\label{tab: SCCP_Normal}
\end{table}

We compare our approach to the empirical probability formulation $\widetilde{F}^N$. This formulation is solved via a mixed-integer linear programming (MIP) formulation \cite{Rusz02}, using \texttt{Gurobi} as the MIP solver with a time limit of 10 minutes. 
Mixing inequalities \cite{Lued14} were not used, because the required preprocessing took already more than 10 min for $N=2,000$.

For the solutions obtained by either method, we computed an out-of-sample approximation of the true quantile, $Q^{0.95}(L^Tx)$, via the empirical $0.95$-quantile using a sample of $N'=$100,000 observations. The results of the experiments are shown in \cref{fig: SCCP_Reinsurance}. We see that, for $N = 200$ and $N = 5,000$, the variance of the solutions obtained by the NLP solver is considerably smaller than the variance of the solutions obtained using the MIP formulation. The spread observed on the quality of the solutions from the MIP for $N = 5,000$ may be due to the 10 minute limit, since the incumbent returned by the solver is not proved to be optimal for the sample approximation for this sample size. We also observe that the solutions obtained by the NLP formulation are consistently better in all cases and are improving as the sample size grows.

\begin{figure}[htbp]
\centering
\begin{subfigure}{0.33\textwidth}
  \centering
  \includegraphics[width=1\linewidth]{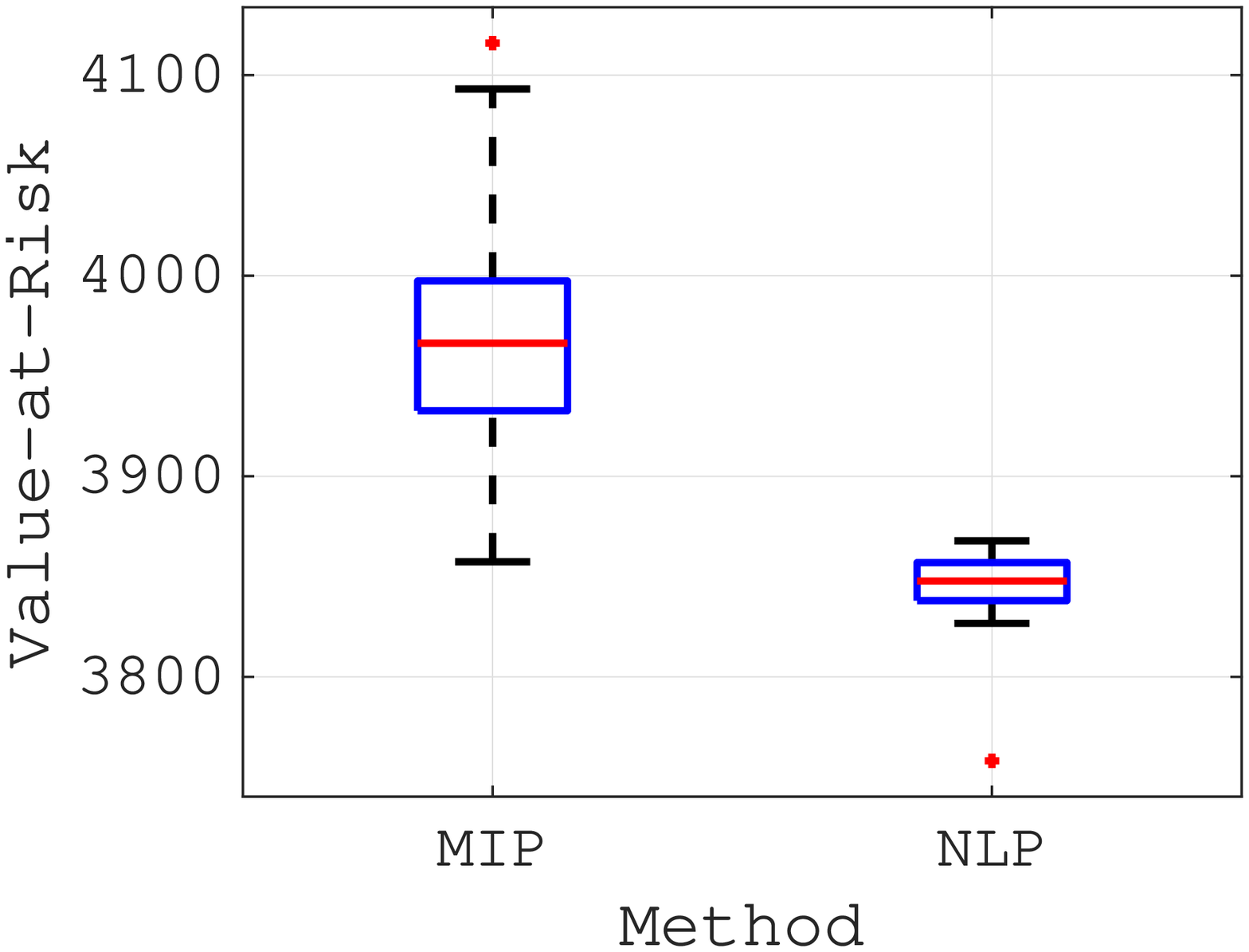}
  \caption{$N = 200$}
\end{subfigure}%
\begin{subfigure}{0.33\textwidth}
  \centering
  \includegraphics[width=1\linewidth]{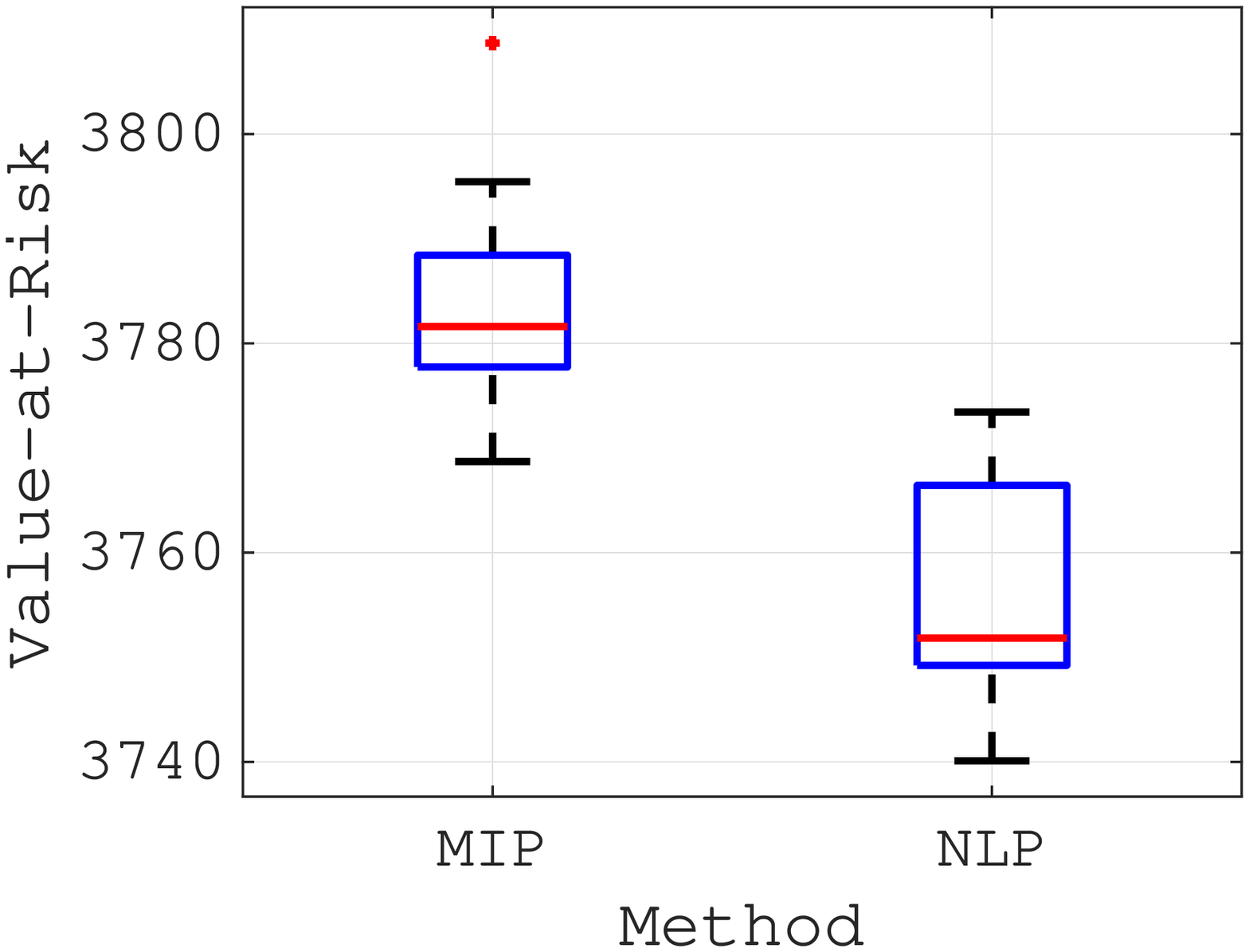}
  \caption{$N = 2,000$}
\end{subfigure} 
\begin{subfigure}{0.33\textwidth}
  \centering
  \includegraphics[width=1\linewidth]{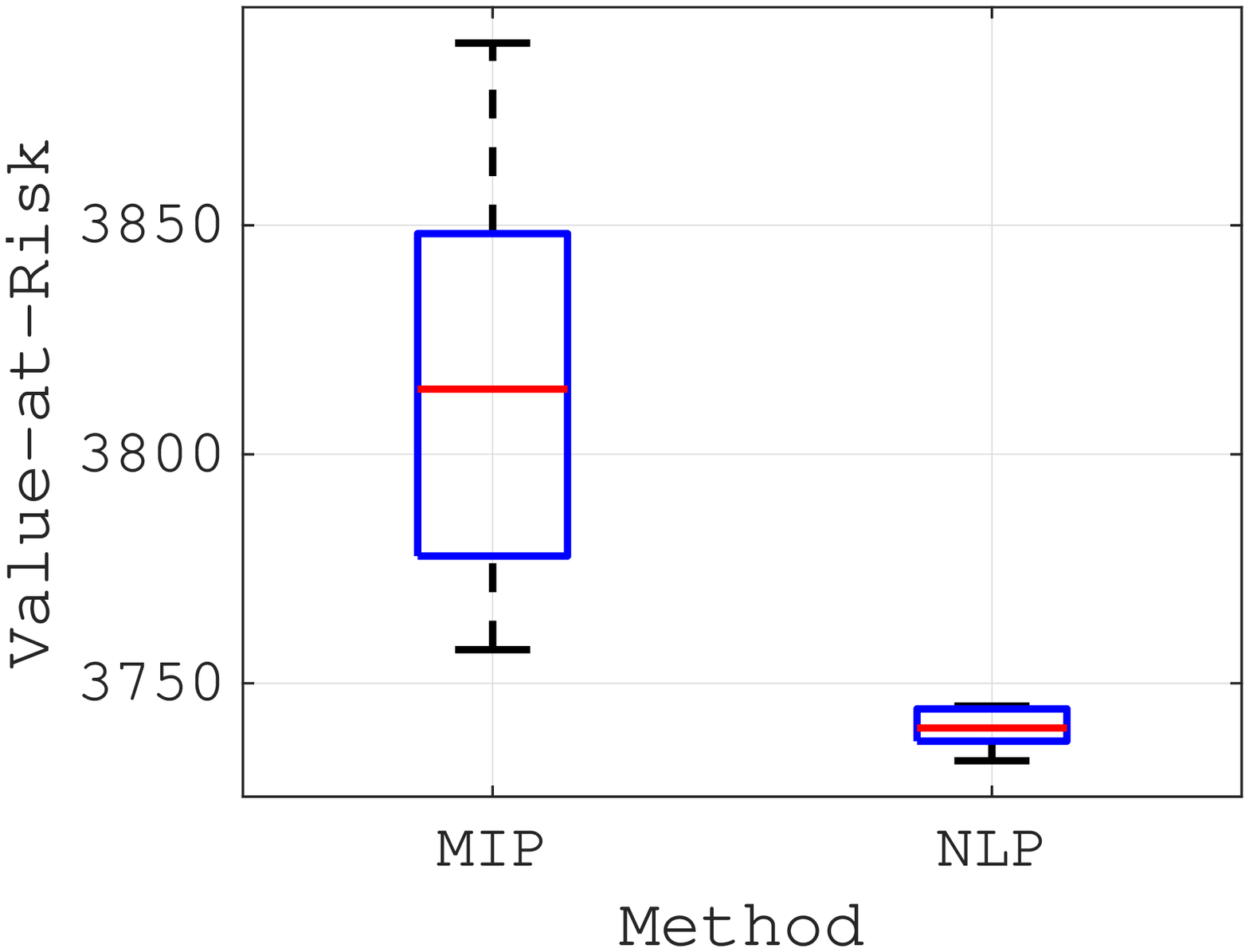}
  \caption{$N = 5,000$}
\end{subfigure}
\caption{Comparison between an MILP approach and our NLP method.}
\label{fig: SCCP_Reinsurance}
\end{figure}

We now compare the performance of the NLP solver when constraint \cref{eq: VaRConstraint} is substituted by $F_\epsilon(0;x) = \frac{1}{N}\sum_{i = 1}^N \Gamma_\epsilon(c(x,\xi_i)) \geq 1 - \alpha$. We use this example to evaluate the advantage of the quantile formulation of the constraint compared to the probability formulation. We did these experiments under the same settings as above. The solutions that the solver found with the probability formulation are the same as for the quantile formulation (within numerical accuracy), except for two cases for $N = 2,000$ and one for $N = 5,000$ in which \texttt{Knitro}'s iterates diverge. The number of iterations and the time taken by the solver is considerably larger compared to the quantile formulation as can be seen in \cref{fig: SCCP_ReinsuranceNLP}. Furthermore, while the quantile formulation requires roughly the same number of iterations for different number of scenarios, the iteration count for the probabilistic constraint grows with the sample size. Therefore, although finding the root of \cref{eq: SmoothVaR} is more expensive than using $F_\epsilon(0;x)$ directly, working with the quantile is preferable in terms of the performance of the solver.

\begin{figure}[htbp]
\centering
\begin{subfigure}{0.5\textwidth}
  \centering
  \includegraphics[width=0.99\linewidth]{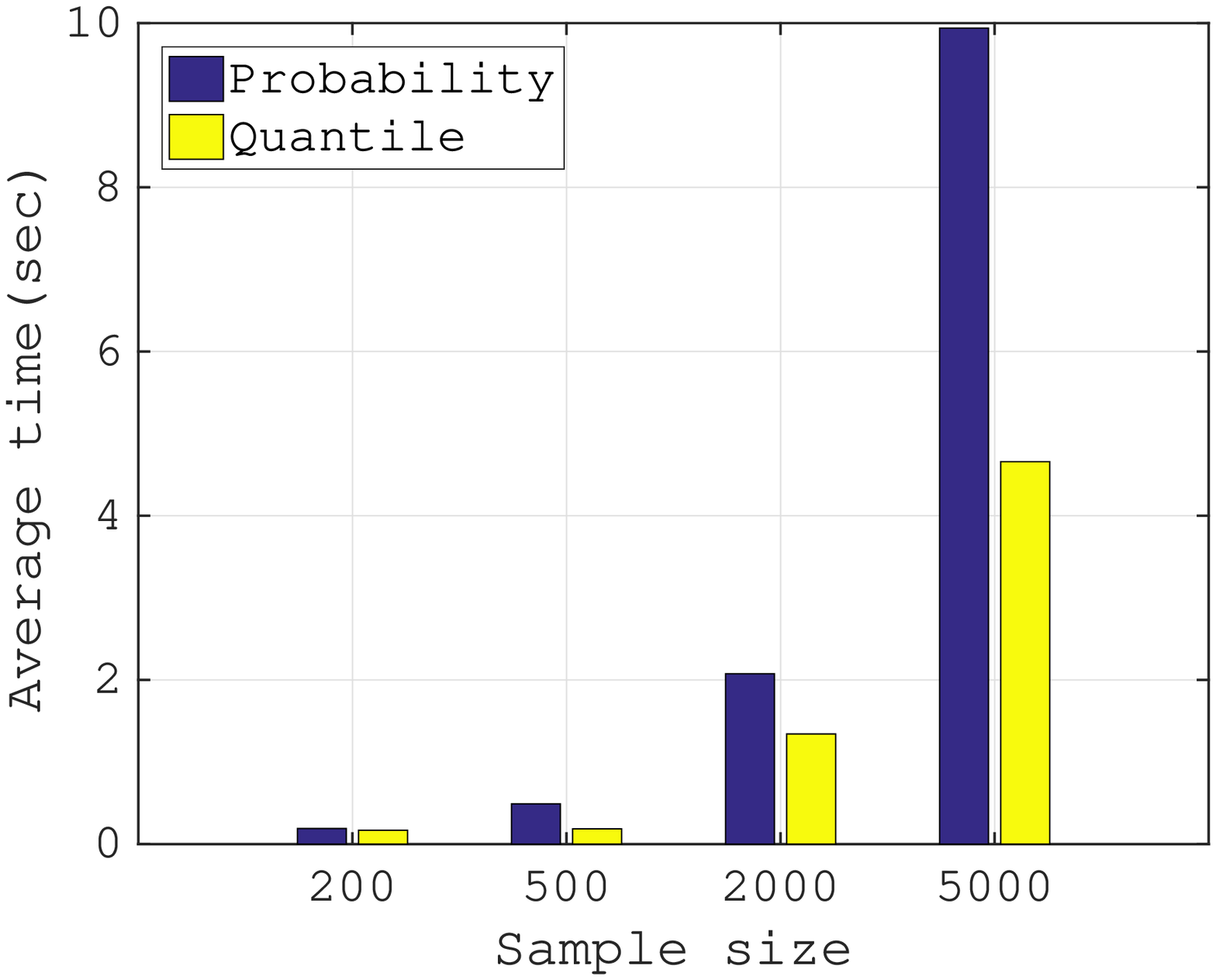}
  \caption{Average time.}
\end{subfigure}%
\begin{subfigure}{0.5\textwidth}
  \centering
  \includegraphics[width=0.99\linewidth]{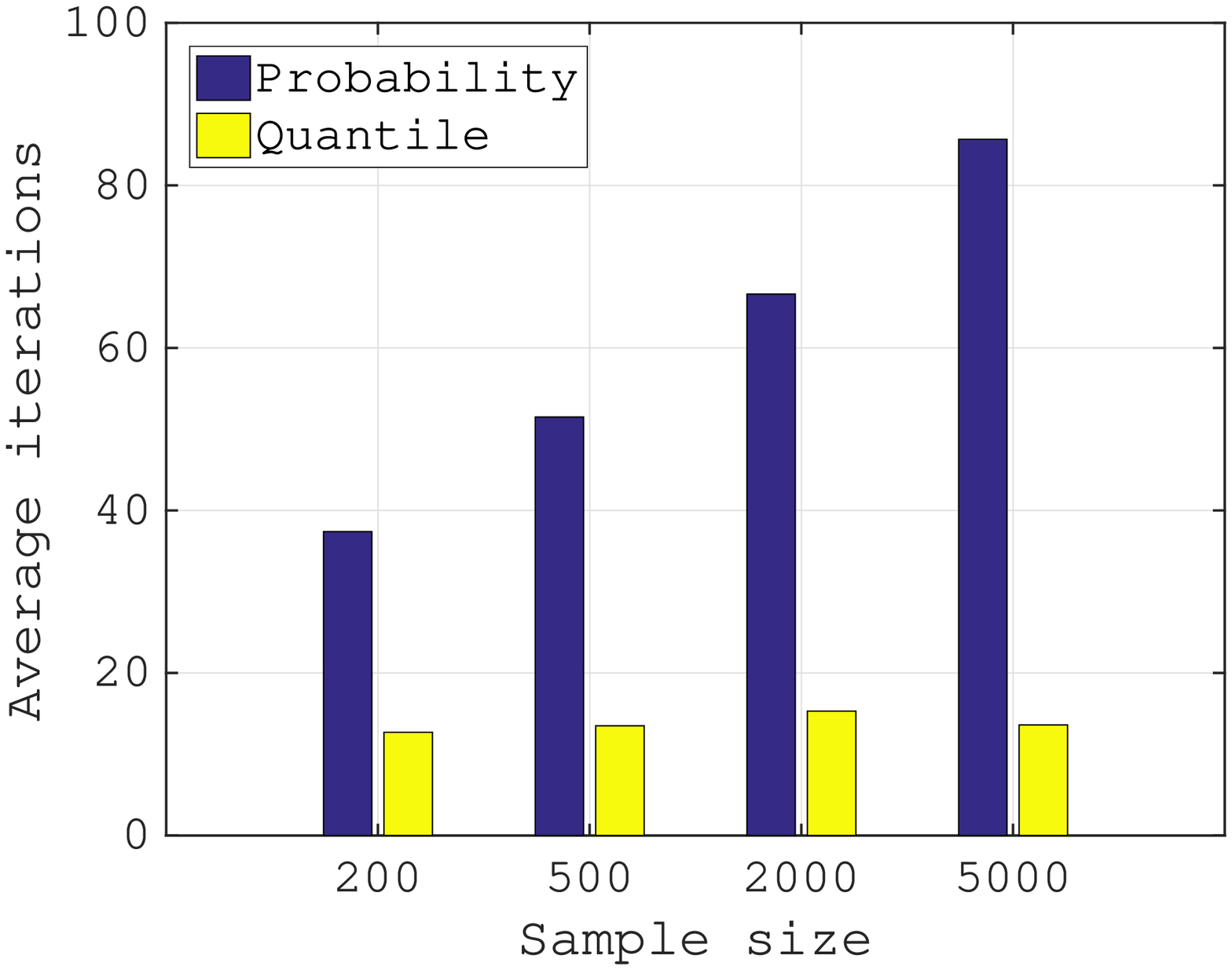}
  \caption{Average number of iterations.}
\end{subfigure}
\caption{Performance comparison between the probability and quantile approximations.}
\label{fig: SCCP_ReinsuranceNLP}
\end{figure}

\subsection{Joint Chance Constraints} \label{sec: NumResJCCP}

The last example we consider is a continuous probabilistic multi-dimensional knapsack problem
\begin{align}
\max_{x \in [0,1]} &\quad \sum_{j = 1}^n p_j x_j \label{eq: MultKnapEg}\\
\text{s.t.} &\quad \proba\left( \sum_{j = 1}^n W_{ij}x_j \leq w_i, i \in \{1,\ldots,m\} \right) \geq 0.95. \nonumber
\end{align}
The problem consists of a set of $n$ items, each with an associated profit $p_j$, and different ``weights'' for each item (for example, both volume and weight, where the volume and weight of each item are not necessarily related). Here, $W_{ij}$ represents the random $i$th ``weight'' of item $j$, and $w_i$ is the maximum ``weight'' limit for constraint $i$ ($w_i$ is a deterministic quantity). The decision variable $x_j \in [0,1]$ represents the fraction of an item that is to be included. The goal is to select the optimal fraction each item in order to maximize the profit, while satisfying all the ``weight'' constraints simultaneously at least $95\%$ of the time. The number of items considered for this experiment was $n = 20$ with $m = 10$ constraints. The randomness comes from the weights $W_{ij}$, whose distribution is described next.

We implemented \cref{alg: JCCPSolver} in \texttt{Matlab}, using \texttt{CPLEX 12.6.3} to solve the QP \cref{eq: QPApprox}. We selected the following parameters for the algorithm: $\pi = 10$, $\hat{\Delta} = 10^6$, $\Delta_0 = 1$, $\eta = 10^{-8}$, $\tau_1 = 1/2$ and $\tau_2 = 2$. We defined $H^k$ as \cref{eq: HessJCCP}, and terminate \cref{alg: JCCPSolver} when $\| \nabla \overline{\mathcal{L}}(x_k,\nu,\lambda) \|_{\infty} \leq 10^{-6}$, $g(x_k) \leq 10^{-6}\pmb{1}_m$, and $Q_\epsilon(C^N(x_k)) < 10^{-6}$.

We generate data for \cref{eq: MultKnapEg} according to the knapsack instance \textit{mk-20-10} from \cite{SongLuedKucu14}, which is based on an available deterministic knapsack instance in \cite{Beas90}. Each item $j$ is available or not according to a Bernoulli distribution. An item that is not available has weight zero in all rows. As a result, the item weights in different rows are not independent. If an item is available, then its weight in each row is normally distributed with mean equal to its weight in the deterministic instance and standard deviation equal 0.1 times the mean. For each sample size $N$, we ran 10 replications.

The results of our proposed algorithm are presented in \cref{tab: JCCP_Knapsack} under the column heading `NLP'.
The computation times include the total time for the binary search algorithm to find a tuned value of $\epsilon$. For
the solutions obtained, we computed an out-of-sample approximation of $\proba(C(x,\xi))$ using the empirical cdf with
$N'=10^6$ scenarios. First, we can see that the time it takes the algorithm to obtain a solution grows modestly with the
sample size $N$. Second, the best solution obtained from solving the problem with 10 different samples of size 100 has an
objective value nearly identical to the best solution obtained using samples of size 1,000. This indicates that solving
the approximation with multiple small samples may be more effective at obtaining good solutions than solving the approximation with a single large sample. Finally, the last row shows that the average value of the tuned smoothing parameter decreases with increased sample size, similar to our observation in \cref{tab: SCCP_Normal}

\begin{table}[htbp]
\centering
\begin{tabular}{ c | | c | c | c | c | c | c }
Sample size & \multicolumn{2}{|c|}{100} & \multicolumn{2}{|c|}{500} & \multicolumn{2}{|c }{1,000} \\ \hline
Method      & NLP      & MIP    & NLP    & MIP    & NLP     & MIP \\\hline \hline
Min. prob   & 0.9499   & 0.9445 & 0.9499 & 0.9503 & 0.9499  & 0.9502 \\
Avg. prob   & 0.9504   & 0.9591 & 0.9503 & 0.9520 & 0.9511  & 0.9514 \\
Max. prob   & 0.9521   & 0.9723 & 0.9526 & 0.9549 & 0.9591  & 0.9535 \\ \hline
Min. obj    & 5816.6   & 5771.3 & 5834.4 & 5830.3 & 5827.5  & 5831.3 \\
Avg. obj    & 5828.9   & 5806.0 & 5839.0 & 5833.2 & 5840.2  & 5836.6 \\
Max. obj    & 5842.6   & 5836.1 & 5842.6 & 5839.2 & 5842.7  & 5841.0 \\ \hline
Avg. time (s) & 8.7 & 90    & 29.3  & 168    & 37.4  & 372    \\
Max. time (s) & 24.6 & 120   & 49.8  & 180    & 80.5  & 420    \\ \hline
Avg. $\epsilon$ & 17   & -     & 9.8     &  -     & 7.5     & -            
\end{tabular}

\bigskip

\begin{tabular}{ c | | c | c | c | c }
Sample size & \multicolumn{2}{|c|}{5,000} & \multicolumn{2}{|c }{10,000} \\ \hline
Method      & NLP      & MIP    & NLP    & MIP    \\\hline \hline
Min. prob   & 0.9499   & 0.9514 & 0.9499 & 0.9506 \\
Avg. prob   & 0.9500   & 0.9562 & 0.9500 & 0.9606 \\
Max. prob   & 0.9501   & 0.9617 & 0.9503 & 0.9733 \\ \hline
Min. obj    & 5840.8   & 5818.1 & 5841.2 & 5798.0 \\
Avg. obj    & 5842.3   & 5828.7 & 5842.4 & 5819.4 \\
Max. obj    & 5842.9   & 5839.9 & 5842.9 & 5833.8 \\ \hline
Avg. time (s) & 88.265 & 330    & 324.39 & 600 \\
Max. time (s) & 163.96 & 600    & 582.53 & 600 \\ \hline
Avg. $\epsilon$ & 5.8  & -      & 4.8    &  -            
\end{tabular}

\caption{Statistics for the multi-knapsack problem.}
\label{tab: JCCP_Knapsack}
\end{table}

We also compared our solutions to the solutions obtained by solving the empirical probability formulation $\widetilde{F}^N$ via an MIP formulation. In order to obtain comparable solutions between the MIP and the NLP formulations, we tuned the risk level $\alpha$ of the MIP formulation to ensure that the solutions obtained were all feasible. This is similar to tuning the $\epsilon$ parameter in the NLP formulation. We limited the overall tuning time for the $\alpha$ parameter in the MIP to ten minutes. For each $\alpha$ trial of the MIP, we gave \texttt{Gurobi} 0.5, 0.5, 1, 5, and 10 minutes for sample size 100, 500, 1,000, 5,000, and 10,000, respectively. These time limits were chosen to give the MIP algorithm significantly more time than the NLP approach. The results are given in \cref{tab: JCCP_Knapsack} under the column heading `MIP'. It can be seen that the MIP solutions have generally lower optimal values and larger variability, the same behavior observed in \cref{sec: ReinsuranceSCCP}.  On the other hand, solutions obtained from \cref{alg: JCCPSolver} consistently present larger objective function values, less variability, and are solved well under 3 minutes for most sample sizes (except $N = 10,000$). It is possible that giving the MIP algorithm more time could lead to better solutions, but these results clearly indicate that our NLP approach is more effective at finding good solutions quickly.  Furthermore, the results show that the binary search for $\epsilon$ can attain the desired risk-level more accurately than changing the $\alpha$ parameter, given the time limit.

\section{Summary} \label{sec:concl}

In this paper we developed a technique to solve nonlinear problems with probabilistic constraints. We first presented an approximation for single chance constraints with the following characteristics: (1) it can be represented as a single smooth constraint, (2) it does not require information about the specific distribution of the random variables or information about the moments since it solely based on sampling; it only needs to satisfy continuity of the distribution, (3) it does not restrict the constraints to be convex, (4) it can be handled by standard continuous nonlinear programming techniques and solvers, and (5) it avoids the combinatorial complexity of a branch and bound search, and scales well with the number of variables and samples.

We also presented an extension of the SCC formulation that is able to handle joint chance constraints. We constructed a specialized trust-region solver for JCCPs utilizing a Hessian matrix that shows fast local convergence in our experiments.


\bibliographystyle{siamplain}
\bibliography{references}
\end{document}